\newtheorem*{rep@thm}{\rep@title}
\newcommand{\newreptheorem}[2]{%
	\newenvironment{rep#1}[1]{%
		\def\rep@title{#2 \ref{##1}}%
		\begin{rep@thm}}%
		{\end{rep@thm}}}
\newtheorem{thm}{Theorem}[section]
\newtheorem{cor}[thm]{Corollary}
\newtheorem{prop}[thm]{Proposition}
\newtheorem{lem}[thm]{Lemma}
\theoremstyle{definition}
\newtheorem{defn}[thm]{Definition}
\theoremstyle{remark}
\newcommand{\N}{\mathbb N}
\newcommand{\Z}{\mathbb Z}
\newcommand{\Q}{\mathbb Q}
\newcommand{\R}{\mathbb R}
\newcommand{\C}{\mathbb C}
\newcommand{\comp}[1]{{#1}^{\mathsf{c}}}
\let\conj\overline
\let\clos\overline
\let\emptyset\varnothing
\let\epsilon\varepsilon
\let\phib\phi
\let\phi\varphi
\newcommand*{\mb}[1]{\ensuremath{\mathbb{#1}}}
\newcommand*{\mc}[1]{\ensuremath{\mathcal{#1}}}
\newcommand*{\mf}[1]{\ensuremath{\mathfrak{#1}}}
\newcommand*{\ms}[1]{\ensuremath{\mathscr{#1}}}
\newcommand*{\bs}{\backslash}
\newcommand*{\dprod}{\displaystyle\prod}
\DeclareMathOperator{\GL}{GL} 
\DeclareMathOperator{\Hom}{Hom}
\DeclareMathOperator{\Spec}{Spec}
\DeclareMathOperator{\Lie}{Lie}
\DeclareMathOperator{\End}{End}
\DeclareMathOperator{\Tr}{Tr}
\DeclareMathOperator{\Res}{Res}
\DeclareMathOperator{\Fil}{Fil}
\DeclareMathOperator{\dR}{dR}
\DeclareMathOperator{\Gr}{Gr}
\DeclareMathOperator{\Aut}{Aut}
\DeclareMathOperator{\der}{der}
\DeclareMathOperator{\ad}{ad}
\DeclareMathOperator{\Gal}{Gal}
\DeclarePairedDelimiter{\norm}{\lVert}{\rVert}
\DeclarePairedDelimiter{\abs}{\lvert}{\rvert}
\DeclarePairedDelimiter{\inner}{\langle}{\rangle}
\DeclarePairedDelimiter{\set}{\{}{\}}
\DeclarePairedDelimiter{\paren}{(}{)}
\title{Heights of Special Points on Quaternionic Shimura Varieties}
\author[Roy Zhao]{Roy Zhao}
\address{Department of Mathematics, University of California, Berkeley, CA, USA.}
\email{rhzhao@berkeley.edu}
\begin{document}
	
	\begin{abstract}
	    Let $B/F$ be a quaternion algebra over a totally real number field. We give an explicit formula for heights of special points on the quaternionic Shimura variety associated with $B$ in terms of Faltings heights of CM abelian varieties. Special points correspond to CM fields $E$ and partial CM-types $\phib \subset \Hom(E, \C)$. We then show that our height is compatible with the canonical height of a partial CM-type defined by Pila, Shankar, and Tsimerman in \cite{Pil21}. This gives another proof that the height of a partial CM-type is bounded subpolynomially in terms of the discriminant of $E$.
	\end{abstract}
	
	\maketitle
	
	\section{Introduction}
	
	In this article we study heights of special points on quaternionic Shimura varieties, motivated by the recent proof of the Andr\'e--Oort conjecture for Shimura varieties by \cite{Pil21}. Let $B/F$ be a quaternion algebra over a totally real field. Special points on an associated quaternionic Shimura variety $X$ correspond to totally imaginary quadratic extensions $E/F$ lying inside $B$. We give a direct formula for the height of a special point in terms of Faltings heights of CM-types of $E$ and explicit logarithms of discriminants. The splitting behavior of $B$ at infinity gives rise to a partial CM-type $\phib \subset \Hom(E, \C)$ such that $\phib \cap \conj{\phib} = \emptyset$. We show that the height of a special point on this quaternionic Shimura variety is compatible with the height of the partial CM-type $h(\phib)$ defined by \cite{Pil21}. Our direct formula gives another proof that the heights of special points of Shimura varieties are bounded in terms of discriminants of number fields. Our formula for $h(\phib)$ also differs from the one given by \cite{Pil21}. The formula given by \cite{Pil21} expresses the height $h(\phib)$ in terms of Faltings heights of CM-types of CM-fields $E'$, where the relative discriminant of $E'/E$ is controlled. Our formula expresses $h(\phib)$ in terms of CM-types of $E$ only. The case when $B$ is split at only one archimedean place of $F$ was proved by \cite{Yua18}, and we recover their formula in that setting. To prove our formula, we follow the strategy of \cite{Yua18} by comparing a Kodaira--Spencer map on $X$ with that of a related PEL-type Shimura variety $X'$.
	
	\subsection{Statement of Main Theorem}
	
	Let $E$ be a CM field, and $F$ be its totally real subfield, so that $[E:F] = 2$. Set $g \coloneqq [F:\Q]$. Let $\phib \subset \Hom(E, \C)$ be a partial CM-type, meaning that $\phib \cap \conj{\phib} = \emptyset$. Write $\Sigma \subset \Hom(F, \R)$ for the restriction of $\phib$ to $F$. Suppose that $B/F$ is a quaternion algebra with the following properties:
 \begin{enumerate}
     \item There exists an embedding $E \hookrightarrow B$;
     \item The ramification set of $B$ at infinity is $\Sigma^c$;
     \item If $B$ is ramified at a finite prime $\mf{p}$ of $F$, then $E$ is also ramified over $\mf{p}$.
 \end{enumerate}
 We define the algebraic group $G$ over $\Q$ as
	\[G \coloneqq \Res_{F/\Q} B^\times.\]
	For each compact open subgroup $U \subset G(\mb{A}_f)$, we obtain a (quaternionic) Shimura variety $X_U$, defined over a number field $E_X$, with complex uniformization given by:
	\[X_U(\C) = G(\Q) \backslash (\mc{H}^\pm)^\Sigma \times G(\mb{A}_f)/U,\]
	where $\mc{H}^\pm$ is the upper and lower complex half-planes. This is a Shimura variety of abelian type, and by utilizing ideas from \cite{Car86} and \cite{Pap13}, we can construct a regular integral model $\mc{X}_U$ for $X_U$ over $\Spec \mc{O}_{E_X}$.
	
	Let $\widehat{\mc{L}_U}$ be the arithmetic Hodge bundle of $\mc{X}_U$, which consists of a line bundle $\mc{L}_U$ on $\mc{X}_U$ and a Hermitian metric given by:
	\[\norm*{\bigwedge_{\sigma \in \Sigma} dz_\sigma} \coloneqq \prod_{\sigma \in \Sigma} 2\mathrm{Im}(z_\sigma),\]
        where the $z_\sigma$ are given by the complex uniformization of $X_U$. When $U$ is sufficiently small, the Hodge bundle is simply the canonical bundle $\mc{L}_U = \omega_{\mc{X}_U/\mc{O}_{E_X}}$. The precise definition of $\widehat{\mc{L}_U}$ is given in Section \ref{sec:Hodge bundle}.
	
	Let $P_U \subset X_U(\clos{\Q})$ be a special point arising from the embedding $E \hookrightarrow B$, and let $\clos{P_U}$ be the closure of this point in $\mc{X}_U$, which we will also denote by $P_U$ by abuse of notation. The height of this point relative to $\widehat{\mc{L}_U}$ is the Arakelov height:
	\[h_{\widehat{\mc{L}_U}}(P_U) \coloneqq \frac{1}{[\Q(P_U):\Q]} \widehat{\deg}(\widehat{\mc{L}_U}|_{P_U}).\]
	
	Finally, let $\Phi$ be a full CM-type, and let $h(\Phi)$ be the Faltings height of an abelian variety with complex multiplication by $(\mc{O}_E, \Phi)$. Let $d_\phib, d_{\conj{\phib}}$ and $d_\Sigma \coloneqq d_{\phib \sqcup \conj{\phib}}$ be certain absolute discriminants of $\phib, \conj{\phib}$, and $\phib \sqcup \conj{\phib}$. These are defined in detail in Section \ref{sec:Faltings height}.
 
    Let $\mf{d}_{E/F}$ denote the relative discriminant of the extension $E/F$. There is a reflex norm $N_{F/E_X} \colon F \to E_X$ defined by $N_{F/E_X}(x) = \prod_{\sigma \in \Sigma} \sigma(x)$. Let $d_{E/F, \Sigma} \in \Z$ be the positive generator of $N_{E_X/\Q}(N_{F/E_X}(\mf{d}_{E/F}))$. Let $d_{E/F} \in \Z$ be the positive generator of $N_{F/\Q}(\mf{d}_{E/F})$ and let $d_F$ be the absolute discriminant of $F$. Let $d_B$ be the positive generator of norm from $F$ to $\Q$ of the product of all the finite places of $\mc{O}_F$ over which $B$ ramifies.
    
    With these definitions in place, we can now state our main theorem.
	
	\begin{thm}\label{thm:main theorem 1}
	    Suppose that $U = \prod_v U_v$ is a maximal compact subgroup of $G(\mb{A}_f)$. Then
        \begin{align*}
            \frac{1}{2}h_{\widehat{\mc{L}_U}}(P_U) =& \frac{1}{2^{\abs{\comp{\Sigma}}}} \sum_{\Phi \supset \phib} h(\Phi) - \frac{\abs{\comp{\Sigma}}}{g2^g} \sum_\Phi h(\Phi)\\
            & + \frac{1}{8} \log d_{E/F, \Sigma}d_\Sigma^{-1} + \frac{1}{4}\log d_\phib d_{\conj{\phib}} + \frac{1}{4g} \log d_Bd_\Sigma + \frac{\abs{\Sigma}}{4g} \log d_F.
            \end{align*}
		The first summation is over all full CM-types which contain $\phib$, and the second summation is over all full CM-types of $E$.
	\end{thm}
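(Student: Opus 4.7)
My plan is to follow the strategy indicated in the introduction: compare the Hodge bundle $\widehat{\mc{L}_U}$ on $\mc{X}_U$ via the Kodaira--Spencer map with an analogous bundle on an auxiliary PEL-type Shimura variety $\mc{X}'$. Because $E \hookrightarrow B$ and $B \otimes_F E \simeq M_2(E)$, one can build $X'$ from a unitary similitude group $G'$ attached to $B \otimes_F E$ equipped with an appropriate involution, together with a natural morphism $G' \to G$ over a reflex field containing $E_X$. Over $X'$ there is a universal abelian scheme $\mc{A}$ of dimension $2g$ with $\mc{O}_E$-action whose signature on $\Lie(\mc{A})$ is determined by $\phib$ together with a choice of lift at each $\sigma \in \comp{\Sigma}$. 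Integral models for $X'$ come from standard PEL moduli, while those for $\mc{X}_U$ are the ones built using \cite{Car86} and \cite{Pap13}.

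Under the $\mc{O}_E$-action, $\omega_{\mc{A}/X'}$ decomposes as $\bigoplus_{\sigma \in \Hom(E,\C)} \omega_\sigma$. The Kodaira--Spencer isomorphism on $X'$ identifies a top exterior power of $\Omega^1_{X'}$ with an explicit tensor product of the $\omega_\sigma$, while Kodaira--Spencer on $X$ expresses $\mc{L}_U$ as an analogous tensor product indexed only by $\Sigma$. Comparing the two descriptions after pullback to $X'$ writes a suitable power of $\mc{L}_U$ in terms of the $\omega_\sigma$, up to corrections at bad primes and an archimedean Hermitian correction. At the special point $P_U$, the fiber of $\mc{A}$ is isogenous to a product of CM abelian varieties indexed by the full CM-types $\Phi \supset \phib$, and the Arakelov degree of $\omega_\sigma$ at $P_U$ computes contributions to $h(\Phi)$. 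Averaging over these CM-types produces the first term $2^{-\abs{\comp{\Sigma}}}\sum_{\Phi \supset \phib} h(\Phi)$, while the correction needed to go from the $\Sigma$-indexed Kodaira--Spencer formula on $X$ to the $\Hom(E,\C)$-indexed one on $X'$ produces the averaged second term $\abs{\comp{\Sigma}}(g 2^g)^{-1}\sum_\Phi h(\Phi)$.

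It remains to evaluate the archimedean and non-archimedean corrections. The archimedean correction comes from comparing the explicit metric $\prod_{\sigma \in \Sigma} 2\mathrm{Im}(z_\sigma)$ on $\mc{L}_U$ with the natural Faltings metric on $\omega_\sigma$ at the CM point; computing the logarithmic difference using the CM-type discriminants yields the $d_\phib d_{\conj{\phib}}$ and $d_\Sigma$ contributions. The non-archimedean corrections arise at primes where $E/F$ or $B$ is ramified: at such primes the map $X' \to X$ fails to be \'etale, so one must use the explicit local models of \cite{Pap13} to compute the local discrepancy between the integral structures of $\mc{L}_U$ and the product $\otimes_\sigma \omega_\sigma$, producing the $d_{E/F,\Sigma}$, $d_B$, and $d_F$ terms. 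The main technical obstacle is precisely this non-archimedean bookkeeping. At ramified primes of $B$ the structure of the integral model is governed by \v Cerednik-type uniformization, and at ramified primes of $E/F$ the morphism $X' \to X$ is genuinely ramified in controlled but delicate ways; reconciling all the resulting local contributions into the precise closed forms $\tfrac{1}{8}\log d_{E/F,\Sigma}d_\Sigma^{-1}$, $\tfrac{1}{4g}\log d_B d_\Sigma$, and $\tfrac{\abs{\Sigma}}{4g}\log d_F$ will be the principal challenge.
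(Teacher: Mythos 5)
Your high-level framing — compare Kodaira--Spencer on $\mc{X}_U$ with that on a PEL-type $\mc{X}'$ and read off CM heights at the special point — matches the paper's starting point, but there are two genuine gaps that the paper handles in a way you have not captured.

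First, your claim that the fiber of $\mc{A}$ at $P_U$ is ``isogenous to a product of CM abelian varieties indexed by the full CM-types $\Phi \supset \phib$'' is incorrect. The universal abelian scheme on $X'$ has relative dimension $2g$, and at the CM point it is $E$-linearly isogenous to a product of just two $g$-dimensional abelian varieties, one of CM type $\phib \sqcup \phib'$ and the other of CM type $\conj\phib \sqcup \phib'$, for a \emph{single fixed} complementary partial CM-type $\phib'$. Only $\phib\sqcup\phib'$ even contains $\phib$. So the direct geometric comparison at $P_U$ produces (via \cite[Cor.~2.5]{Yua18} and the identification of $p$-divisible groups through the auxiliary variety $X''$) a formula of the shape
\[
h_{\widehat{\mc{L}_U}}(P_U) \;=\; \sum_{\tau\in\phib}\bigl(h(\phib\sqcup\phib',\tau)+h(\conj\phib\sqcup\phib',\conj\tau)\bigr)+\tfrac{1}{2g}\log d_B d_\Sigma,
\]
depending a priori on $\phib'$, not a sum over all $\Phi\supset\phib$.

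Second, and more importantly, you do not identify the mechanism that converts this into the stated theorem. The paper's key observation is that the left side is manifestly independent of $\phib'$ (the quaternionic datum never sees $\phib'$), so one may \emph{sum the right side over all $2^{\abs{\comp\Sigma}}$ complementary types $\phib'$}. Unwinding that sum requires the $\tau$-decomposition $h(\Phi)=\sum_\tau h(\Phi,\tau)$ of \cite{Yua18} together with two nontrivial inputs you never invoke: the invariance of $h(\Phi_1,\tau_1)+h(\Phi_2,\tau_2)$ over nearby CM-type pairs (Theorem~\ref{thm:nearby CM-type}), which produces the constant $h_{\mathrm{nb}}$, and the identity $\sum_\Phi h(\Phi)=g2^{g-1}h_{\mathrm{nb}}-2^{g-2}\log d_F$ from \cite[Cor.~2.6]{Yua18}. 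These are precisely what generate the subtracted term $\frac{\abs{\comp\Sigma}}{g2^g}\sum_\Phi h(\Phi)$ and the $\frac{\abs\Sigma}{4g}\log d_F$ contribution. Without them your ``correction needed to go from the $\Sigma$-indexed formula to the $\Hom(E,\C)$-indexed one'' is not actually a proof step. The remaining discriminant terms then come not from separate archimedean and local-model bookkeeping as you suggest, but from applying $h(\Phi)-\sum_\tau h(\Phi,\tau)=-\tfrac{1}{4[E_\Phi:\Q]}\log(d_\Phi d_{\conj\Phi})$ and carrying out a Vandermonde computation that factors the product $\prod_{\Phi\supset\phib}d_\Phi d_{\conj\Phi}$ into the pieces $d_F$, $d_{E/F,\Sigma}/d_\Sigma$, and $d_\phib d_{\conj\phib}$.
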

	
	Additionally, if $\abs{\phib} = 1$, then $E_X = F$, and we have that $d_\Sigma = d_{E/F} = d_{E/F, \Sigma}$ and $d_\phib = d_{\conj{\phib}} = 1$. As a result, the expression for $\frac{1}{2}h_{\widehat{\mc{L}_U}}(P_U)$ simplifies, and we recover \cite[Thm. 1.6]{Yua18}, where the factor of $g$ is due to different normalizing factors of $h_{\widehat{\mc{L}_U}}(P_U)$.

    In the Pila--Zannier method, an essential step involves bounding the height of a CM point in terms of the discriminant of its splitting field. For general Shimura varieties, a CM point $P$ is associated with a partial CM-type $\phib$ of a CM-field $E$. In \cite{Pil21}, a canonical height $h(\phib)$ is introduced for such $\phib$, and this height $h(\phib)$ is shown to be equal to the height $h(P)$ of the associated CM point $P$, up to $O(\log d_E)$. Then, the height $h(\phib)$ is bounded in terms of the discriminant $d_E$, showing that $h(P)$ is as well. The precise definition of $h(\phib)$ can be found in Section \ref{sec:Andre Oort}. Our second main result establishes the compatibility between $h_{\widehat{\mc{L}_U}}(P_U)$ and $h(\phib)$.
    
    \begin{thm}\label{thm:main theorem 2}
        \[h(\phib) = \frac{1}{2}h_{\widehat{\mc{L}_U}}(P_U) + O(\log d_E).\]
    \end{thm}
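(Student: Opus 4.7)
The plan is to unpack the canonical height $h(\phib)$ from \cite{Pil21}, to be recalled in Section \ref{sec:Andre Oort}, and match it up to $O(\log d_E)$ against the explicit formula for $\tfrac12 h_{\widehat{\mc{L}_U}}(P_U)$ supplied by Theorem \ref{thm:main theorem 1}.

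I would first dispose of the discriminant contributions to Theorem \ref{thm:main theorem 1}. The four terms $\tfrac18\log d_{E/F,\Sigma}d_\Sigma^{-1}$, $\tfrac14\log d_\phib d_{\conj{\phib}}$, $\tfrac1{4g}\log d_B d_\Sigma$, and $\tfrac{|\Sigma|}{4g}\log d_F$ involve only discriminants of $F$, $E/F$, and norms thereof. Each of $d_F,\,d_{E/F},\,d_\Sigma,\,d_\phib,\,d_{\conj{\phib}},\,d_{E/F,\Sigma}$ is polynomially bounded by $d_E$ via the tower formula for discriminants together with the definitions given in Section \ref{sec:Faltings height}. The quantity $d_B$ is controlled by $d_E$ as well, since by hypothesis (3) the finite primes of $F$ at which $B$ ramifies lie over primes ramifying in $E/F$. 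Consequently, the entire non-Faltings part of Theorem \ref{thm:main theorem 1} is $O(\log d_E)$ and gets absorbed into the error.

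It then remains to compare the Faltings-height parts. The right-hand side of Theorem \ref{thm:main theorem 1} contains
\[ \frac{1}{2^{|\comp{\Sigma}|}}\sum_{\Phi\supset\phib} h(\Phi) \;-\; \frac{|\comp{\Sigma}|}{g\,2^g}\sum_{\Phi} h(\Phi), \]
an explicit linear combination of Faltings heights of full CM-types of $E$. By contrast, the height $h(\phib)$ of \cite{Pil21} is defined, modulo $O(\log d_E)$, in terms of Faltings heights $h(\Phi')$ of full CM-types of auxiliary CM fields $E'/E$ whose relative discriminant is controlled by a polynomial in $d_E$. Using that Faltings heights of isogenous CM abelian varieties differ by $O(\log N)$ for $N$ the conductor of the isogeny, combined with Colmez-type averaging identities relating CM-types of $E'$ to those of $E$, I would rewrite $h(\phib)$ as a linear combination of $h(\Phi)$ ranging over full CM-types of $E$, still modulo $O(\log d_E)$, and then verify that the coefficients so obtained match the ones appearing above.

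The principal obstacle is this last combinatorial matching, which requires understanding how full CM-types $\Phi'$ of $E'$ restrict and redistribute among full CM-types $\Phi$ of $E$ extending $\phib$, and identifying the resulting multiplicities with the fractions $\tfrac1{2^{|\comp{\Sigma}|}}$ and $-\tfrac{|\comp{\Sigma}|}{g\,2^g}$. A secondary technical point is ensuring that every intermediate isogeny conductor and relative discriminant appearing in the PST construction is polynomially controlled by $d_E$; this should follow from the explicit structure of the auxiliary fields $E'$ in \cite{Pil21}. Once the Faltings-height coefficients agree, the theorem follows by assembling the $O(\log d_E)$ contributions from step one and from the isogeny--averaging reductions.
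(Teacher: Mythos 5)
Your approach goes through the closed-form formulas for the two heights, whereas the paper proves the equality by comparing the two height \emph{functions} directly at each place. Concretely, the paper never invokes Theorem \ref{thm:main theorem 1} in the proof of Theorem \ref{thm:main theorem 2}; instead it takes the representation of $R_E$ on $V/F$ coming from $V = B$, observes that its complexification contains $2\chi$, and then verifies that the lattice and norm data entering the Pila--Shankar--Tsimerman definition of $h(\phib)$ agree with those entering $h_{\widehat{\mc{L}_U}}(P_U)$. At the finite places where $B$ and $E$ are unramified (the only ones that matter modulo $O(\log d_E)$), the crystalline lattice is identified with $\Omega(\mc{A})\otimes S\cong \Omega(\mc{H}''_S)\cong \Omega(\mc{H}_S)$ via Proposition \ref{prop:compare p divisible groups}, and then Theorem \ref{thm:Hodge Bundle p Divisible Group} relates $\Omega(\mc{H}_S)\otimes\Omega(\mc{H}_S^t)$ to $\mc{L}_U$. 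This is a place-by-place comparison of Arakelov data, not an algebraic manipulation of Colmez-style identities.

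The main issue with your plan is that the ``combinatorial matching'' you flag as the principal obstacle is not merely an unverified computation — it is the substance of the whole theorem, and it is not clear it can be carried out in the way you describe. The Pila--Shankar--Tsimerman construction does not give $h(\phib)$ as a single explicit linear combination of Faltings heights of full CM-types; rather it expresses linear combinations of $h(\phib_i)$ over families $\{(E_i,\phib_i)\}$ in terms of heights of CM-types of product fields $E^S=\prod_{i\in S}E_i$ and then solves across subsets $S$. Reducing this to a clean linear combination of $h(\Phi)$ for CM-types $\Phi$ of $E$ alone is precisely what the paper claims as a new result (see the remark after Corollary \ref{cor:bound partial CM height}), and you cannot simply assume it as input. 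Absent a direct argument for that reduction, the proposed ``Colmez-type averaging identities relating CM-types of $E'$ to those of $E$'' and the isogeny bookkeeping are not established steps but restatements of the open difficulty. The paper sidesteps this entirely by comparing the two adelically-metrized line bundles, which makes the $O(\log d_E)$ error transparent without ever expanding $h(\phib)$ into a formula.

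A smaller point: your step bounding $d_B$ via hypothesis (3) and the other discriminants by $d_E$ is fine and mirrors the argument in Corollary \ref{cor:bound partial CM height}, but that step by itself only lets you drop the logarithmic terms; it does nothing toward matching the Faltings-height parts, which is where the proof must actually live.
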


    With the combination of Theorem \ref{thm:main theorem 1} and Theorem \ref{thm:main theorem 2}, we obtain the following corollary. This is an ingredient used in the Pila--Zannier method, and serves as one of the results of \cite{Pil21}.

    \begin{cor}\label{cor:bound partial CM height}
        For all $\epsilon > 0$, there exists a positive constant $c$ depending only on $[E:\Q]$ such that
        \[h(\phib) \le c \cdot d_E^\epsilon\]
        for all partial CM-types of $E$.
    \end{cor}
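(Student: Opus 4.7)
The plan is to combine Theorems \ref{thm:main theorem 1} and \ref{thm:main theorem 2}, reducing the bound on $h(\phib)$ to a subpolynomial bound on the individual terms appearing on the right-hand side of the formula in Theorem \ref{thm:main theorem 1}. Since $h(\phib)$ depends only on the partial CM-type, not on the choice of $U$, we may freely pick a maximal compact $U \subset G(\mb{A}_f)$ so that both theorems apply. This yields an equality of the form
\[
h(\phib) = \sum_{\Phi} a_\Phi \, h(\Phi) + \frac{1}{8}\log(d_{E/F,\Sigma}d_\Sigma^{-1}) + \frac{1}{4}\log(d_\phib d_{\conj{\phib}}) + \frac{1}{4g}\log(d_B d_\Sigma) + \frac{\abs{\Sigma}}{4g}\log d_F + O(\log d_E),
\]
where $\Phi$ ranges over the full CM-types of $E$ and the coefficients $a_\Phi$ are rational numbers bounded in terms of $g = [F:\Q]$ only.

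Next I would handle the discriminant contributions. Each of $d_F$, $d_{E/F}$, $d_{E/F,\Sigma}$, $d_\Sigma$, $d_\phib$, $d_{\conj{\phib}}$ divides a fixed power of $d_E$ depending only on $g$: for instance $d_F^{[E:F]}$ divides $d_E$ via the conductor-discriminant formula, and the discriminants $d_\Sigma, d_\phib, d_{\conj{\phib}}, d_{E/F,\Sigma}$ are built from norms of $\mf{d}_{E/F}$ or from discriminants of subfields of the Galois closure of $E$, all of whose absolute values are polynomially bounded in $d_E$. For $d_B$, hypothesis (3) of the setup forces every finite place of $F$ ramifying $B$ to divide $\mf{d}_{E/F}$, so $d_B$ divides a power of $d_{E/F}$ and hence a power of $d_E$. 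Each logarithmic discriminant term is therefore $O(\log d_E)$, with implicit constant depending only on $[E:\Q]$.

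The substantive input is the bound on the Faltings heights $h(\Phi)$. By Tsimerman's theorem \cite{Pil21} (which combines the averaged Colmez conjecture, proved by Andreatta--Goren--Howard--Madapusi and independently by Yuan--Zhang, with Brauer--Siegel-type bounds on Artin $L$-functions at $s=1$), for every full CM-type $\Phi$ of the CM field $E$ and every $\epsilon > 0$ one has $h(\Phi) \le c_0 \cdot d_E^\epsilon$ with $c_0$ depending only on $[E:\Q]$ and $\epsilon$. The number of full CM-types of $E$ is at most $2^g$, bounded in terms of $[E:\Q]$, so $\sum_\Phi a_\Phi h(\Phi) = O_\epsilon(d_E^\epsilon)$ with an implicit constant depending only on $[E:\Q]$.

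Assembling these estimates gives $h(\phib) = O_\epsilon(d_E^\epsilon) + O(\log d_E) = O_\epsilon(d_E^\epsilon)$, which is the claim. The only non-routine step is the invocation of the subpolynomial bound on Faltings heights of CM abelian varieties; the remainder is bookkeeping with the explicit formula of Theorem \ref{thm:main theorem 1} together with standard conductor-discriminant inequalities.
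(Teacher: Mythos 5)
Your proof follows the same route as the paper: combine Theorems \ref{thm:main theorem 1} and \ref{thm:main theorem 2}, bound each logarithmic discriminant term by $O(\log d_E)$, and invoke the subpolynomial bound on Faltings heights of full CM-types of $E$ coming from the averaged Colmez conjecture (the paper cites \cite[Cor. 3.3]{Tsi18} for this, whereas you attribute it to \cite{Pil21} --- a minor citation slip, though you correctly identify the underlying ingredients). Your handling of the discriminant terms is actually a bit more careful than the paper's terse ``each is smaller than $d_E$'': noting that $d_F$, $d_{E/F,\Sigma}$, $d_\Sigma$, $d_\phib$, $d_{\conj\phib}$ each divide a fixed power of $d_E$ depending only on $g$, and that $d_B$ divides $d_{E/F}$ via hypothesis (3), is exactly the right way to make the $O(\log d_E)$ bound with constant depending only on $[E:\Q]$ precise.
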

    \begin{proof}
        By \cite[Cor. 3.3]{Tsi18}, the Faltings heights $h(\Phi)$ of full CM-types are bounded subpolynomially by $d_E$. Each of the discriminants $d_{E/F, \Sigma}, d_\Sigma, d_\phib, d_F$ are smaller than $d_E$, and $d_B \le d_E$ since we specified that the ramification set of $E$ contains the ramification set of $B$. Thus, each of the logarithm terms are bounded by $\log d_E$, which is also subpolynomial in $d_E$.
    \end{proof}

    This result differs from that of \cite{Pil21} because in our case, we are able to express $h(\phib)$ in terms of CM-types of $E$, whereas in \cite{Pil21}, they express $h(\phib)$ in terms of CM-types $\Phi^\sharp$ of CM-fields $E^\sharp$ containing $E$, whose relative discriminant over $E$ can be bounded.

        \subsection{Motivation}
	
	The Andr\'e--Oort conjecture states that if $S$ is a Shimura variety and $V \subset S$ is a subvariety with a dense subset of special points, then $V$ itself must be a special subvariety. Definitions and properties for Shimura varieties can be found in \cite[Sec. 4]{Mil05} and for special subvarieties in \cite[Sec. 2]{Daw18}. The Andr\'e--Oort conjecture was originally proven by Andr\'e in the case when $S$ is the product of two modular curves \cite{And98} and later for arbitrary products of modular curves by Pila (see \cite{Pil11}). Pila and Tsimerman (see \cite{Pil14}) were able to extend this strategy to prove the conjecture unconditionally for $\mc{A}_g$, the coarse moduli space of principally polarized abelian varieties of dimension $g$, when $g \le 6$. The averaged Colmez Conjecture, proven by \cite{And18} and \cite{Yua18}, allowed Tsimerman to give an unconditional proof (see \cite{Tsi18}) of the conjecture for $\mc{A}_g$ for all $g$. Moreover, following results of \cite{Bin23}, Pila, Shankar, and Tsimerman recently announced an unconditional proof of the conjecture for all Shimura varieties in \cite{Pil21}. There is also a version of the Andr\'e--Oort conjecture for mixed Shimura varieties that was reduced to the pure Shimura varieties case by Gao (see \cite{Gao16}).

	Many of the recent results on the Andr\'e--Oort conjecture were proven using a strategy that was initially proposed by Zannier, by using the theory of o-minimality and a point counting theorem of Pila and Wilkie from \cite{Pil06}, combined with estimates on the sizes of certain Galois orbits. This strategy was shown to be viable when Pila and Zannier used it to reprove the Manin--Mumford conjecture in \cite{Zan08}. And it is using this Pila--Zannier strategy that the conjecture was proven for products of the modular curve (see \cite{Pil11}), the moduli space $\mc{A}_g$ (see \cite{Pil14, Tsi18}), and for all Shimura varieties (see \cite{Pil21}). The strategy can be split up into three main ingredients.
	\begin{enumerate}
	    \item The first ingredient is the Pila--Wilkie point counting theorem from \cite{Pil06}. It gives an upper bound subpolynomial in height on the rational points of the transcendental component of definable sets. The fundamental domains of the universal covering map of a Shimura variety are definable in an o-minimal structure. This was shown for $\mc{A}_g$ by Peterzil and Starchenko (see \cite{Pet13}), and for arbitrary Shimura varieties by Klinger, Ullmo, and Yafaev (see \cite{Kli13}). A sharpened version proven by Binyamini (see \cite{Bin22}) was necessary for general Shimura varieties.
	    
	    \item The second ingredient is a lower bound polynomial in height for the size of Galois orbits of special points. This is done in two steps. The first step to provide a lower bound polynomial in height for the discriminant of certain endomorphism algebras. This was done by Pila and Tsimerman (see \cite{Pil13}) for $\mc{A}_g$, and by Binyamini, Schmidt, and Yafaev (see \cite{Bin23}) for arbitrary Shimura varieties. The second step is to provide a lower bound subpolynomial in terms of discriminant of those same algebras. Tsimerman proves this for $\mc{A}_g$ (see \cite{Tsi18}) by combining a result of Masser and W\"ustholz (see \cite{Mas95}), and the averaged Colmez conjecture, which was proven independently by Andreatta, Goren, Howard, and Madapusi-Pera (see \cite{And18}), and Yuan and Zhang (see \cite{Yua18}). For general Shimura varieties, Binyamini, Schmidt, and Yafaev (see \cite{Bin23}) reduce this to proving the existence of a canonical height on Shimura varieties and that the height of special points is bounded subpolynomially in terms of the discriminant of certain number fields. A proof of this result was recently announced by Pila, Shankar, and Tsimerman (see \cite{Pil21}).
	    
	    \item The third ingredient is the Ax--Lindemann theorem. The first two ingredients combine to show that Galois orbits of special points must lie in the algebraic part of the fundamental domain. Then, the Ax--Lindemann theorem tells us that these algebraic parts are precisely special subvarieties. This was proven for $\mc{A}_g$ by Pila and Tsimerman (see \cite{Pil14}), and by Mok, Pila, and Tsimerman for all Shimura varieties (see \cite{Mok19}).
	\end{enumerate}
	
	For this article, we are interested in the contribution of \cite{Pil21} in showing that their canonical height for CM points on Shimura varieties is bounded in terms of discriminants of their splitting fields. The first step is to systematically define a Weil height function for any arbitrary Shimura variety. Given a Shimura variety $\mathrm{Sh}_K(G, X)$, take a $\Q$-representation $G \to \GL(V)$ and lattice $\Lambda \subset V$. By the Riemann--Hilbert correspondence over $p$-adic local fields, given by \cite{Dia22}, we get a filtered automorphic vector bundle with connection $(_{\dR}V, \Fil^\bullet, \nabla)$, which is defined over the reflex field of $\mathrm{Sh}_K(G, X)$ and all of its $p$-adic places. Then the plan is to define an adelic norm on $\Gr^\bullet_{\dR} V$, which would give rise to an Arakelov height function on $\mathrm{Sh}_K(G, X)$. At the archimedean places, this representation admits a polarization $\psi\colon V \times V \to \Q$ and the norm can be defined as the Hodge norm $\psi(v, h(i)v)$. Over the finite places, the crystalline norm is used when the representation is crystalline and an alternative intrinsic norm is used at the finitely many other places. This height is compatible in the sense that if $(G_1, X_1) \to (G_2, X_2)$ is a map of Shimura datum with $\rho_i$ a representation of $G_i$ compatible with this morphism, then the height of a point of $\mathrm{Sh}_{K_2}(G_2, X_2)$ with respect to $\rho_2$ is equal to the height of a point of $\mathrm{Sh}_{K_1}(G_1, X_1)$ with respect to $\rho_1$. This height also coincides with the Faltings height for $\mc{A}_g$.
	
	With this height defined, the next step is to bound the height of special points in terms of discriminants of certain number fields. If $(T, x) \subset (G, X)$ is a Shimura sub-datum of a special point, then $T$ splits over a CM field $E/F$. Given a representation $\rho \colon G \to \GL(V)$ and restricted representation $\rho_x \colon T \to G \to \GL(V)$, we can map $(T, x, \rho_x) \to (\Res_{F/\Q} E^\times/F^\times, x, \rho_\phib)$ to another Shimura datum where $\phib \subset \Hom(E, \C)$ is a partial CM-type and $\rho_\phib$ is a representation of $\Res_{F/\Q} E^\times/F^\times$ in terms of $\phib$. The height $h(\phib)$ is defined as the height of $(\Res_{F/\Q} E^\times/F^\times, x, \rho_\phib)$. By the compatibility of heights, the problem is reduced to bounding the height $h(\phib)$. Given a set of disjoint CM-fields and partial CM-types $\set{(E_i, \phib_i)}_{i = 1}^t$ they are able to express a linear combination of $h(\phib_i)$ in terms of a linear combination of heights of full CM-types $\Phi^S$ of $E^S = \prod_{i \in S} E_i$ for various subsets $S \subset \{1, 2, \dots, t\}$. By ranging over all subsets $S$, we can express the height of an individual $h(\phib_i)$ as a 
 linear combination of Faltings heights of CM-types $\Phi^S$ of $E^S$, where $S$ varies over all possible subsets of $\{1, 2, \dots, t\}$. The height of the full CM-type $h(\Phi^S)$ is bounded subpolynomially in terms of the discriminant $d_{E^S}$.  Choosing $E_i$ carefully, we can bound the relative discriminant $\mf{d}_{E^S/E_i}$, completing the proof.
 
    Instead of expressing the height $h(\phib)$ in terms of heights of CM-types over the many $E^S$, we give an expression in terms of CM-types of $E$ only.
 
	\subsection{Idea of the Proof}
	
	The idea is similar to that of \cite{Yua18}. We use their decomposition of Faltings heights of a CM abelian variety $h(\Phi)$ into constituent parts $h(\Phi, \tau)$, one for each archimedean place $\tau \in \Phi$. The constituent parts are related to the full CM-type by the formula
	\[h(\Phi) - \sum_{\tau \in \Phi} h(\Phi, \tau) = \frac{-1}{4[E_\Phi:\Q]} \log (d_\Phi d_{\conj{\Phi}}),\]
	where $d_\Phi, d_{\conj{\Phi}}$ are discriminants associated with $\Phi, \conj{\Phi}$ respectively, and $E_\Phi$ is the reflex field of $\Phi$. Moreover, if $(\Phi_1, \Phi_2)$ are nearby CM-types of $E$ in that they differ only at a single place $\tau_i = \Phi_i \bs (\Phi_1 \cap \Phi_2)$, then \cite{Yua18} proves that the quantity
	\[h(\Phi_1, \tau_1) + h(\Phi_2, \tau_2)\]
	is the same across any choice of nearby CM-types.
	
	We define the group
	\[G'' \coloneqq \Res_{F/\Q} (B^\times \times E^\times)/F^\times,\]
	where $F$ embeds diagonally as $a \mapsto (a, a^{-1})$. We can construct a norm $N\colon G'' \to \Res_{F/\Q} \mb{G}_m$ and define the group
	\[G' \coloneqq G'' \times_{\mb{G}_m} \Res_{F/\Q} \mb{G}_m\]
	consisting of elements $G''$ with norm lying in $\Q^\times$. If $\phib$ is a partial CM-type and $\phib'$ is a complementary partial CM-type in that $\phib \sqcup \phib'$ constitute a full CM-type, then we can construct morphisms $h' \colon \C^\times \to G'(\R)$ and $h''\colon \C^\times \to G''(\R)$. They give rise to Shimura datum and Shimura varieties $X'_{U'}$ and $X''_{U''}$ for compact open subgroups $U' \subset G'(\mb{A}_f)$ and $U'' \subset G''(\mb{A}_f)$ with complex uniformizations
	\[X'_{U'}(\C) = G'(\Q) \backslash (\mc{H}^\pm)^\Sigma \times G'(\mb{A}_f)/U'\]
	and
	\[X''_{U''}(\C) = G''(\Q) \backslash (\mc{H}^\pm)^\Sigma \times G''(\mb{A}_f)/U''\]
	They have canonical models defined over the same reflex field $E_{X'} = E_{X''}$.
	
	The Shimura variety $X'_{U'}$ is of PEL type and has an integral model $\mc{X}'_{U'}$ by \cite{Car86} and \cite{Pap13}. The pair $(\phib, \phib')$ gives rise to a point $P'_{U'} \in X'_{U'}$ which parametrizes an abelian variety isogenous to a product $A_1 \times A_2$ of abelian varieties, one with complex multiplication of type $\phib \sqcup \phib'$ and the other with complex multiplication of type $\conj{\phib} \sqcup \phib'$. After defining a suitable metric on $\omega_{\mc{X}'_{U'}/\mc{O}_{E_{X'}}}$, the Kodaira--Spencer isomorphism on $X'_{U'}$ gives us an equality of heights
	\[h_{\widehat{\omega_{\mc{X}'_{U'}/\mc{O}_{E_{X'}}}}}(P'_{U'}) = \sum_{\tau \in \phib} \paren*{h(\phib \sqcup \phib', \tau) + h(\conj{\phib} \sqcup \phib', \conj{\tau})}.\]
	
	Now the idea is to relate $\omega_{\mc{X}_U/\mc{O}_{E_X}}$ and $\omega_{\mc{X}'_{U'}/\mc{O}_{E_{X'}}}$. We do this by mapping both $X_U$ and $X'_{U'}$ into the third Shimura variety $X''_{U''}$ so that the points $P_U$ and $P'_{U'}$ have the same image $P''_{U''} \in X''_{U''}(\clos{\Q})$. We represent both canonical bundles in terms of deformations of $p$-divisible groups $\mc{H}_U$ and $\mc{H}'_{U'}$ over $\mc{X}_U$ and $\mc{X}'_{U'}$ respectively, and then relate those $p$-divisible groups to a $p$-divisible group $\mc{H}''_{U''}$ over $\mc{X''}_{U''}$. After showing all this, we get that
	\[h_{\widehat{\mc{L}_U}}(P_U) = h_{\widehat{\omega_{\mc{X}'_{U'}/\mc{O}_{E_{X'}}}}}(P'_{U'}) = \sum_{\tau \in \phib} \paren*{h(\phib \sqcup \phib', \tau) + h(\conj{\phib} \sqcup \phib', \conj{\tau})}.\]
	
	
	Of note is that this formula does not depend on the choice of complementary partial CM-type $\phib'$, because the Shimura variety $X_U$ was defined independently of the choice of $\phib'$. We utilize this by summing over all possible complementary CM-type, which will express $h_{\widehat{\mc{L}_U}}(P_U)$ in terms of heights of full CM-types containing $\phib$ as well as nearby CM-types. The sum of heights of nearby CM-types is shown to be constant in \cite{Yua18}, and equal to the averaged height of all CM-types of $E$. Combining these two, we are able to express the height in terms of CM-types containing $\phib$ and an average of all possible CM-types.	
	
	\subsection{Structure of the Article}
	
	We first recall from \cite{Yua18} the decomposition of a Faltings height in Section \ref{sec:Faltings height}. We then describe three Shimura varieties that can be constructed from a quaternion algebra following \cite{Tia16} by describing their generic fiber in Section \ref{sec:Quaternionic Shimura Variety} and integral models in Section \ref{sec:Integral Model}. We then describe some line bundles on these Shimura varieties in terms of Lie algebras of certain $p$-divisible groups described in Section \ref{sec:p divisible group} and relate these Lie algebras, and finally define the Hodge bundle, in Section \ref{sec:Hodge bundle}. Finally, we prove our theorem for the height of partial CM-types in Section \ref{sec:Special Point} and compare our height with those introduced in \cite{Pil21} in Section \ref{sec:Andre Oort}.

    \subsection{Acknowledgements} We wish to thank Sebastian Eterovi\'c, Ananth Shankar, and Xinyi Yuan for their helpful discussions with us.
 
 \section{CM-types and Faltings Heights}\label{sec:Faltings height}
	\subsection{Faltings Height}
	
	We first define the Faltings height of an abelian variety. It will be defined as the degree of a metrized line bundle. Let $A$ be an abelian variety of dimension $g$ defined over a number field $K$ and let $\mc{A}$ be the N\'eron model over $\mc{O}_K$ and let the identity section be $s\colon \Spec \mc{O}_K \to \mc{A}$. Let $\Omega_{\mc{A}/\mc{O}_K}$ be the sheaf of relative differentials. The \emph{Hodge bundle} of $A$ is the vector bundle $\Omega(\mc{A}) \coloneqq s^*\Omega_{\mc{A}/\mc{O}_K}$ over $\mc{O}_K$. This is canonically isomorphic to the pushforward $\pi_* \Omega_{\mc{A}/\mc{O}_K}$, where $\pi\colon \mc{A} \to \mc{O}_K$ is the structure sheaf morphism.

	The Hodge bundle $\Omega(\mc{A})$ is a vector bundle over $\mc{O}_K$ of rank $g$ and taking the determinant $\omega(\mc{A}) \coloneqq \Omega(\mc{A})^{\wedge g}$ gives a line bundle over $\mc{O}_K$. To make this into a metrized line bundle, we need to define a norm for each archimedean place of $K$. We have that
	\[\omega(\mc{A}) \otimes_{\mc{O}_K} K \cong s^* \omega_{A/K} = H^0(A, \omega_{A/K}).\]
	For each archimedean place $v$ of $K$, we put the norm as
	\[\norm{\alpha}_v \coloneqq \abs*{\frac{1}{(2\pi)^g}\int_{A_v(\C)}\alpha \wedge \conj{\alpha}}^{\frac{1}{2}}\]
	for each $\alpha \in \omega(\mc{A}) \otimes_{\mc{O}_K} K_v \cong H^0(A_v, \omega_{A_v/K_v})$. In this way, we get a metrized line bundle $\widehat{\omega(\mc{A})} \coloneqq (\omega(\mc{A}), \norm{\cdot}_v)$.
	
	\begin{defn}
		The \emph{Faltings height} of the abelian variety $A/K$ is the Arakelov height
		\[h(A) \coloneqq \frac{1}{[K: \Q]} \widehat{\deg} \widehat{\omega(\mc{A})} = \frac{1}{[K:\Q]} \paren*{\log \abs{\omega(\mc{A})/(\mc{O}_K\cdot s)} - \sum_{\sigma \colon K \to \C}\log \|s\|_\sigma},\]
		for a choice of $s \in \omega(\mc{A}) \backslash \{0\}$. This is well defined and independent of the choice of $s$ by the product formula.
	\end{defn}
	
	If $A$ has semistable reduction over $K$, then the Faltings height is invariant under finite field extensions. In general, we define the stable Faltings height as the height after base change to a finite extension $K'/K$ such that $A$ has semistable reduction over $K'$. Such a $K'$ always exists.
	
	\subsection{CM-types}

	A \emph{CM-field extension} is an extension $E/F$ of number fields such that $F/\Q$ is a totally real field and $E/F$ is a quadratic totally imaginary extension. We say $E$ is a \emph{CM-field} and $F$ is its \emph{totally real subfield}.
		
	A \emph{(full) CM-type} is a subset $\Phi \subset \Hom(E, \C)$ such that $\Phi \sqcup \conj{\Phi} = \Hom(E, \C)$, where $\conj{\Phi} = \{\conj{\sigma} : \sigma \in \Phi\}$. A \emph{partial CM-type} is a subset $\phib \subset \Hom(E, \C)$ such that $\phib \cap \conj{\phib} = \emptyset$. We say that $\phib'$ is a \emph{complementary partial CM-type to $\phib$} if $\phib \sqcup \phib'$ is a CM-type.
		
	We say that a complex abelian variety $A$ has complex multiplication of type $(\mc{O}_E, \Phi)$ if there exists an embedding $\iota\colon \mc{O}_E \to \End(A)$ and an isomorphism $\Lie(A) \cong \C^g \overset{\Phi}{\cong} E \otimes_\Q \R$ of $\mc{O}_E$ modules.

	Let $E$ be a CM-field with degree $[E:\Q] = 2g$ and let $\Phi \subset \Hom(E, \C)$ be a CM-type. Let $A_\Phi$ be an abelian variety of CM-type $(\mc{O}_E, \Phi)$. Then, there is a number field $K$ over which $A_\Phi$ is defined and has a smooth projective integral model $\mc{A}/\mc{O}_K$. Colmez proved the following theorem
	
	\begin{thm}[{{\cite[Thm 0.3]{Col93}}}]
		The Faltings height $h(A_\Phi)$ depends only on the CM-type $(E, \Phi)$.
	\end{thm}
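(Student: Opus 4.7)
The plan is to prove that any two abelian varieties $A, A'$ of CM-type $(\mc{O}_E, \Phi)$ have the same stable Faltings height. First I would pass to a common number field $K$ over which both $A$ and $A'$ are defined and have everywhere good reduction; this is possible because CM abelian varieties acquire everywhere good reduction over a finite extension of any field of definition. After this reduction, it suffices to compare the two metrized Hodge bundles $\widehat{\omega(\mc{A})}$ and $\widehat{\omega(\mc{A}')}$ directly over $\mc{O}_K$.

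Next I would exhibit an explicit $\mc{O}_E$-linear isogeny $f \colon A \to A'$. The complex uniformizations of $A$ and $A'$ take the form $\C^\Phi/\Phi(\mf{a})$ and $\C^\Phi/\Phi(\mf{a}')$ respectively, for fractional $\mc{O}_E$-ideals $\mf{a}, \mf{a}'$. Choosing any $\lambda \in E^\times$ with $\lambda \mf{a} \subseteq \mf{a}'$ then gives an $\mc{O}_E$-equivariant isogeny $f$ of degree $N_{E/\Q}(\lambda \mf{a}/\mf{a}')$, defined over $K$ (possibly after a further finite extension).

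The key step, and the main obstacle, is showing that such $\mc{O}_E$-linear isogenies preserve the Arakelov degree of the Hodge bundle. On the non-archimedean side, $\omega(\mc{A})/f^*\omega(\mc{A}')$ is a torsion $\mc{O}_E$-module whose total length, summed over finite places, equals $\log N_{E/\Q}(\mf{a}'/\lambda \mf{a})$, since $\mc{O}_E$ acts on the Hodge bundle through its action on $\Lie(A)$. On the archimedean side, using that the Hodge metric $\| \bigwedge_{\tau \in \Phi} dz_\tau\|_v$ is, up to universal constants, the covolume of $\Phi(\mf{a})$ in $\C^\Phi$, the change of lattice from $\Phi(\mf{a})$ to $\Phi(\mf{a}')$ induced by $\lambda$ rescales this covolume by $|N_{E/\Q}(\lambda)|$. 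Summing the archimedean and non-archimedean contributions and applying the product formula $\prod_v |N_{E/\Q}(\lambda)|_v = 1$ produces the desired cancellation, giving $\widehat{\deg}\,\widehat{\omega(\mc{A})} = \widehat{\deg}\,\widehat{\omega(\mc{A}')}$.

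The delicate point is the archimedean bookkeeping: identifying the Hodge norm, defined via $\tfrac{1}{(2\pi)^g}\int \alpha \wedge \conj{\alpha}$, with the covolume of the period lattice in $\C^\Phi$, and then tracking how this covolume transforms under the $\mc{O}_E$-linear change of lattice induced by $\lambda$. Once this identification is made precise, the non-archimedean comparison together with the product formula forces $h(A) = h(A')$ for all pairs $A, A'$ of type $(\mc{O}_E, \Phi)$, so $h(A_\Phi)$ depends only on the pair $(E, \Phi)$, as required.
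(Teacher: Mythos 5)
This statement is stated in the paper with a citation to \cite[Thm 0.3]{Col93}; the paper supplies no proof of its own, so there is nothing internal to compare against. On the merits, your strategy --- reduce to two CM abelian varieties of the same type, connect them by an $\mc{O}_E$-linear isogeny $f$, and compare the Arakelov degrees of the Hodge bundles place by place --- is the right one, and is in the spirit of what Colmez does. Two points, however, need more care than the sketch gives them.

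First, the non-archimedean bookkeeping. The quotient $\omega(\mc{A})/f^*\omega(\mc{A}')$ is a torsion $\mc{O}_K$-module, not an $\mc{O}_E$-module: $\mc{O}_E$ acts on $\Omega(\mc{A})$, but not on its top exterior power. What one needs is the decomposition $\Omega(\mc{A}) \cong \bigoplus_{\tau \in \Phi} \Omega(\mc{A})_\tau$ over a sufficiently large $\mc{O}_K$ (each piece a rank-one $\mc{O}_K$-module), together with the fact that on the $\tau$-piece the isogeny introduces a cokernel of size $\#(\mc{O}_K/\tau(\mf{b})\mc{O}_K)$, where $\mf{b}$ is the $\mc{O}_E$-ideal with $\ker f = A[\mf{b}]$. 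Summing over the $g$ embeddings $\tau \in \Phi$ (rather than all $2g$ embeddings of $E$) gives $\log \#\bigl(\omega(\mc{A})/f^*\omega(\mc{A}')\bigr) = \tfrac{[K:\Q]}{2}\log N_{E/\Q}(\mf{b}) = \tfrac{[K:\Q]}{2}\log \deg f$. That factor of $\tfrac{1}{2}$ --- the Hodge bundle being ``half'' of de Rham cohomology, cut out by exactly the embeddings in $\Phi$ --- is the whole point, and it is what makes the cancellation with the archimedean side possible; it does not follow merely from ``$\mc{O}_E$ acts on the Hodge bundle through its action on $\Lie(A)$.'' Second, the cancellation is not really an application of the product formula for the single element $\lambda$: the finite contribution involves $N(\mf{b}) = N(\lambda)N(\mf{a})/N(\mf{a}')$, not just $N(\lambda)$, and the archimedean side likewise depends on the covolume ratio $N(\mf{a})/N(\mf{a}')$ as well as on $\lambda$. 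What actually cancels is the matched pair $(\deg f)^{[K:\Q]/2}$ appearing on both sides, and that matching is forced by the CM-type decomposition just described, not by the product formula applied to $\lambda$. With those two points filled in, the argument goes through.
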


	We write $h(\Phi) \coloneqq h(A_\Phi)$. Colmez conjectured a formula about $h(\Phi)$ in terms of logarithmic derivatives of Artin L-functions related to $\Phi$. This conjecture has been proven when $E/\Q$ is an abelian extension by Obus and Colmez (see \cite{Obu13}) and when $F$ is a real quadratic field by Yang (see \cite{Yan10}). An averaged version was proven in \cite{Yua18}, and independently in \cite{And18}.
	
	\begin{thm}[{{\cite[Thm A]{And18}, \cite[Thm 1.1]{Yua18}}}]
		Suppose $E/F$ is an CM-extension and let $\chi\colon \mb{A}_F^\times \to \set{\pm 1}$ the character corresponding to this extension, and $L(s, \chi)$ the corresponding Artin L-function. Let $d_F$ be the absolute discriminant of $F$ and $d_{E/F}$ the norm of the relative discriminent of $E/F$. Then
		\[\frac{1}{2^g} \sum_\Phi h(\Phi) = - \frac{1}{2} \frac{L'(0, \chi)}{L(0, \chi)} - \frac{1}{4} \log (d_{E/F}d_F),\]
		where the sum on the left runs through the set of all CM-types of $E$.
	\end{thm}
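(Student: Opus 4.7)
The plan is to follow the Yuan--Zhang strategy from \cite{Yua18}, which interprets both sides of the identity via a single incoherent Eisenstein series on the metaplectic cover of $\SL_2(\mb{A}_F)$. The averaged Faltings height on the left will emerge as an arithmetic intersection number on an auxiliary quaternionic Shimura variety, while the L-function side arises as the analytic expression for the central derivative of the same Eisenstein series.

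First I would fix an incoherent totally definite quaternion algebra $\mb{B}$ over $\mb{A}_F$ and consider the associated zero-dimensional Shimura set $X_{\mb{B}}$. CM points on $X_{\mb{B}}$ correspond to CM-types of $E$, so averaging $h(\Phi)$ over all $2^g$ CM-types amounts to averaging an arithmetic height over the full CM orbit. Using the definition of the Faltings height in terms of the metrized Hodge bundle, one shows that $\frac{1}{2^g} \sum_\Phi h(\Phi)$ equals the Arakelov degree of the Hodge bundle on $X_{\mb{B}}$ restricted to a single CM orbit, up to a universal normalizing constant coming from the size of that orbit.

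Next I would apply the arithmetic Siegel--Weil formula to identify this arithmetic degree with the central derivative $\mc{E}'(0)$ of an incoherent Eisenstein series attached to $\chi$. On the analytic side, the Fourier expansion of $\mc{E}(s)$ combined with its functional equation produces $\mc{E}'(0) = -\frac{1}{2} L'(0,\chi)/L(0,\chi)$ plus local correction terms: the archimedean local Whittaker coefficients yield $-\frac{1}{4}\log d_F$, while the non-archimedean local terms at the places ramified in $E/F$ yield $-\frac{1}{4}\log d_{E/F}$, assembling into the claimed right-hand side.

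The main obstacle is the arithmetic Siegel--Weil formula itself: matching arithmetic intersection numbers on the Shimura variety with derivatives of local Whittaker coefficients requires delicate analysis at every place, especially where $E/F$ is ramified, and demands a careful construction of an integral model of $X_{\mb{B}}$ with controlled bad reduction, together with a precise comparison between the Hodge bundle metric and the archimedean Green function side of the theta lift. A different route, taken by \cite{And18}, bypasses Eisenstein series and works directly with Kudla--Rapoport arithmetic intersection theory on integral models of unitary Shimura varieties; the local computations there are of comparable subtlety but organized differently, using deformation theory of $p$-divisible groups in place of Whittaker analysis.
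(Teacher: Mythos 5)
The paper does not prove this theorem; it is quoted verbatim from \cite{And18} and \cite{Yua18} and used as a black box, so there is no internal proof to compare your sketch against.

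That said, two points are worth flagging about your sketch of the Yuan--Zhang route. First, the auxiliary Shimura variety in \cite{Yua18} is a quaternionic Shimura \emph{curve} (the quaternion algebra over $F$ is split at exactly one archimedean place, i.e.\ the case $\abs{\Sigma}=1$ of the present paper), not a zero-dimensional Shimura set attached to a totally definite $\mb{B}$; the zero-dimensional object plays a role only as the auxiliary torus side of the comparison. Second, and more importantly, your outline omits the structural ingredient that makes the average tractable and that the present paper leans on throughout Section~\ref{sec:Faltings height} and the proof of Theorem~\ref{thm:heightP}: the decomposition $h(\Phi)=\sum_{\tau\in\Phi}h(\Phi,\tau)+\text{(disc.\ terms)}$ together with the invariance of $h(\Phi_1,\tau_1)+h(\Phi_2,\tau_2)$ for nearby CM-types (Theorems~\ref{thm:fullvscomponent} and~\ref{thm:nearby CM-type}). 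It is precisely this decomposition that converts the average over all $2^g$ CM-types into a single Arakelov intersection on the Shimura curve, after which the arithmetic Siegel--Weil / Gross--Zagier machinery enters. Without it, the identification of $\frac{1}{2^g}\sum_\Phi h(\Phi)$ with a height of a CM orbit on the curve is not just ``a universal normalizing constant'' away; it is the main content. Your characterization of the L-function side (derivative of an incoherent Eisenstein series, with local Whittaker contributions producing the discriminant terms) and of the AGHM alternative is a fair high-level description, but given that the entire difficulty is packed into the Siegel--Weil input you defer, the proposal is best read as an accurate pointer to the literature rather than as a proof.
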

	
	\subsection{Decomposition of Heights}
	
	We recall the results of \cite{Yua18} decomposing the Faltings height of a CM-type $\Phi$ into its constituent embeddings $\tau \in \Phi$. To decompose the height, we first decompose the Hodge bundle into its eigenspaces.
	
	Let $A$ be an abelian variety with complex multiplication of type $(\mc{O}_E, \Phi)$. We define
	\[\Omega(A)_\tau \coloneqq \Omega(A) \otimes_{E_\tau} \C,\]
	where $E_\tau$ acts on $\C$ through the projection $E \otimes_\Q \C \cong \prod_{\sigma \in \Hom(E, \C)} \C_\sigma \to \C_\tau$. This gives us a decomposition of the Hodge bundle as
	\[\Omega(A) \cong \bigoplus_{\tau\colon E \to \C} \Omega(A)_\tau \cong \bigoplus_{\tau \in \Phi} \Omega(A)_\tau.\]
	The latter isomorphism holds because $\Omega(A)_\tau = 0$ for $\tau \not \in \Phi$.
	
	Let $A^t$ be the dual abelian variety of $A$. Then, we have canonical isomorphisms
	\[\Omega(A^t) = \Lie(A^t)^\vee \cong H^1(A, \mc{O}_A)^\vee \cong H^{0, 1}(A)^\vee = \conj{\Omega(A)}^\vee,\]
	so that if $A$ is of CM-type $(\mc{O}_E, \Phi)$, then $A^t$ is of CM-type $(\mc{O}_E, \conj{\Phi})$. From this isomorphism, we also get a perfect Hermitian pairing $\Omega(A^t) \otimes \Omega(A) \to \C$.

	Just as before, we can decompose
	\[\Omega(A^t) \cong \bigoplus_{\tau \in \conj{\Phi}} \Omega(A^t)_\tau.\]
	The Hermitian pairing from before decomposes into a sum of orthogonal pairings $\Omega(A)_\tau \otimes \Omega(A^t)_{\conj{\tau}} \to \C$. Taking the determinant gives a Hermitian norm on the line bundle
	\[N(A, \tau) \coloneqq \det \Omega(A)_\tau \otimes \det \Omega(A^t)_{\conj{\tau}}.\]
	
	We can extend $N(A, \tau)$ to an integral model of $A$. If $\mc{A}$ is the N\'eron model over $\mc{O}_K$ as before, with $K$ including all embeddings of $E \to \clos{\Q}$, define
	\[\Omega(\mc{A})_\tau \coloneqq \Omega(\mc{A}) \otimes_{\mc{O}_K \otimes \mc{O}_E, \tau} \mc{O}_K\]
	for each $\tau\colon E \to K$. We define $\Omega(\mc{A}^t)_\tau$ analogously. For each archimedean place of $K$, we use the aforementioned Hermitian norm $\norm{\cdot}$ on the generic fiber of $\det \Omega(\mc{A})_\tau \otimes \det \Omega(\mc{A}^t)_{\conj{\tau}}$, and thus we get a metrized line bundle
	\[\widehat{\mc{N}(\mc{A}, \tau)} \coloneqq (\det \Omega(\mc{A})_\tau \otimes \det \Omega(\mc{A}^t)_{\conj{\tau}}, \norm{\cdot}).\]
	\begin{defn}
		If $A$ is an abelian variety of CM-type $(E, \Phi)$ and $\tau\colon E \to \C$, then the $\tau$-part of the Faltings height of $A$ is
		\[h(A, \tau) \coloneqq \frac{1}{2[K:\Q]} \widehat{\deg}\widehat{\mc{N}(\mc{A}, \tau)}.\]
	\end{defn}
	Note that if $\tau \not \in \Phi$, then $\mc{N}(\mc{A}, \tau) = 0$ and so the height contribution is $0$ as well.
	
	Just as with the Faltings height, this $\tau$-component is independent of the abelian variety itself. Thus, we will write $h(\Phi, \tau)$ for $h(A, \tau)$.
	
	\begin{thm}[{{\cite[Thm 2.2]{Yua18}}}]\label{thm:fullvscomponent}
		If $A$ has CM of type $(\mc{O}_E, \Phi)$, the height $h(A, \tau)$ depends only on the pair $(\Phi, \tau)$.
	\end{thm}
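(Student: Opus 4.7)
The plan is to prove the invariance of $h(A,\tau)$ under $\mc{O}_E$-equivariant isogenies among abelian varieties of CM-type $(\mc{O}_E,\Phi)$. Since any two complex abelian varieties with this CM-type become $\mc{O}_E$-isogenous after descending to a common number field, such isogeny invariance suffices. First I would enlarge the number field $K$ so that both $A$ and $A'$ are defined over $K$, acquire everywhere good reduction, and are linked by an $\mc{O}_E$-equivariant isogeny $f\colon A\to A'$ which extends to the abelian scheme models $\mc{A},\mc{A}'$ over $\mc{O}_K$. Since Néron models commute with base change under good reduction and the Hermitian norms transform transparently, the normalized quantity $\frac{1}{2[K:\Q]}\widehat{\deg}\,\widehat{\mc{N}(\mc{A},\tau)}$ is stable under such base extension, reducing the theorem to the equality of this quantity for $\mc{A}$ and $\mc{A}'$.

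Next I would analyze the effect of $f$ on $\widehat{\mc{N}(\mc{A},\tau)}$ place by place. At finite places, pullback gives an $\mc{O}_E$-equivariant inclusion $f^*\colon\Omega(\mc{A}')_\tau\hookrightarrow\Omega(\mc{A})_\tau$, and the dual isogeny $f^t\colon(\mc{A}')^t\to\mc{A}^t$ gives $(f^t)^*\colon\Omega(\mc{A}^t)_{\conj\tau}\hookrightarrow\Omega((\mc{A}')^t)_{\conj\tau}$. Tensoring produces a canonical nowhere-zero section of $\mc{N}(\mc{A}',\tau)^{-1}\otimes\mc{N}(\mc{A},\tau)$, whose total cokernel length contributes the non-archimedean part of $\widehat{\deg}\,\widehat{\mc{N}(\mc{A},\tau)}-\widehat{\deg}\,\widehat{\mc{N}(\mc{A}',\tau)}$. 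At archimedean places I would exploit the perfect Hermitian pairing $\Omega(A)_\tau\otimes\Omega(A^t)_{\conj\tau}\to\C$ coming from Serre duality; its functoriality under $f$, expressed as $\langle f^*\alpha,\beta\rangle_A=\langle\alpha,(f^t)_*\beta\rangle_{A'}$, together with $\deg f=\deg f^t$, produces an archimedean scaling factor that cancels the finite-place cokernel contribution via the product formula on $K$.

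The main obstacle will be verifying this cancellation for each $\tau$-component separately. Each $\Omega(\mc{A})_\tau$ and $\Omega(\mc{A}^t)_{\conj\tau}$ is locally free of rank one over $\mc{O}_K$, so after choosing generators everything reduces to a scalar computation, but the Hermitian pairing couples only the $(\tau,\conj\tau)$ components, so one has to track carefully how the local finite-place indices of $\operatorname{coker}(f^*|_\tau)$ and $\operatorname{coker}((f^t)^*|_{\conj\tau})$ interact with the local archimedean Hermitian scalings through the self-duality $A\cong A^{tt}$ and $(f^t)^t=f$. In effect, the proof rests on showing that the canonical section of $\mc{N}(\mc{A},\tau)\otimes\mc{N}(\mc{A}',\tau)^{-1}$ induced by $f$ is an Arakelov isometry, which is the essence of \cite[Thm. 2.2]{Yua18} and which ultimately hinges on the orthogonality of the $\tau$-decomposition of the Hermitian pairing together with the equality $\deg f=\deg f^t$.
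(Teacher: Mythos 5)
Your reduction to $\mc{O}_E$-equivariant isogeny invariance and your archimedean analysis are on the right track: with $f\colon A\to A'$, the compatibility $\langle f^*\alpha',\beta\rangle_A=\langle\alpha',(f^t)^*\beta\rangle_{A'}$ (you write $(f^t)_*$, but you mean the pullback) shows that the rational map $f^*\otimes((f^t)^*)^{-1}\colon\mc{N}(A',\tau)\dashrightarrow\mc{N}(A,\tau)$ is an \emph{isometry} at every infinite place, so the archimedean contribution to $h(A,\tau)-h(A',\tau)$ is exactly zero. This means there is no ``archimedean scaling factor'' available to cancel the finite-place cokernel contribution via the product formula, which is the mechanism you propose; moreover the cokernel ideal is not in general principal, so the product formula would not apply in any case. (Also, $f^*\otimes(f^t)^*$ is \emph{not} a nowhere-zero section of $\mc{N}(\mc{A}',\tau)^{-1}\otimes\mc{N}(\mc{A},\tau)$ --- the two pullbacks run in opposite directions between the primed and unprimed data, so the correct object is the rational section $f^*\otimes((f^t)^*)^{-1}$, which has both zeros and poles at finite places.)

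With the archimedean part gone, the theorem reduces entirely to the finite-place identity $N(I_\tau)=N(J_{\conj\tau})$, where $I_\tau$ and $J_{\conj\tau}$ are the cokernel ideals of $f^*$ on $\Omega(\cdot)_\tau$ and of $(f^t)^*$ on $\Omega(\cdot)_{\conj\tau}$ respectively, and this is precisely the content you leave unproved. The equality $\deg f=\deg f^t$ only controls the product $\prod_\tau N(I_\tau)=\prod_\tau N(J_{\conj\tau})$ (equivalently, isogeny invariance of the \emph{total} Faltings height), not the individual $\tau$-pieces, and ``orthogonality of the $\tau$-decomposition'' is an archimedean feature with no bearing on integral cokernels. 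What is actually needed, and what \cite{Yua18} supplies, is a local study of $G=\ker f$ with its $\mc{O}_E$-action: reduce to $\ell$-primary isogenies, decompose the $\ell$-divisible group along the primes of $E$ over $\ell$, and show that the $\tau$-component of $\omega_G$ has the same length as the $\conj\tau$-component of $\omega_{G^\vee}$ by tracking how Cartier duality exchanges the $\mc{O}_E$-action with its conjugate (via the Rosati involution). Your closing sentence --- that the canonical section is an Arakelov isometry --- restates the theorem rather than proving it.
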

	
	We call a pair of CM-types $(\Phi_1, \Phi_2)$ \emph{nearby} if $\abs{\Phi_1 \cap \Phi_2} = g - 1$. Let $\tau_i = \Phi_i \backslash (\Phi_1 \cap \Phi_2)$ be the place where they differ. Then, the sum of the $\tau_i$-components of $h(\Phi_i)$ is independent of the choice of nearby CM-type.
	
	\begin{thm}[{{\cite[Thm. 2.7]{Yua18}}}]\label{thm:nearby CM-type}
	    The quantity $h(\Phi_1, \tau_1) + h(\Phi_2, \tau_2)$ is independent of the choice of nearby CM-type $(\Phi_1, \Phi_2)$.
	\end{thm}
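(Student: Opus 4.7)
The plan is to exhibit an intrinsic interpretation of $h(\Phi_1,\tau_1)+h(\Phi_2,\tau_2)$ that manifestly depends only on the place $v$ of $F$ underlying $\tau_1$ and $\tau_2=\conj{\tau_1}$, rather than on either lift individually, and then to propagate this intrinsic character to invariance across all nearby pairs.

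First, I would fix a number field $K$ containing all embeddings $E\hookrightarrow\clos{\Q}$, Néron models $\mc{A}_i/\mc{O}_K$ of CM abelian varieties of type $(\mc{O}_E,\Phi_i)$, and form $\mc{B}\coloneqq\mc{A}_1\times\mc{A}_2$ with its diagonal $\mc{O}_F$-action. Rearranging the tensor factors, the underlying line bundle of $\widehat{\mc{N}(\mc{A}_1,\tau_1)}\otimes\widehat{\mc{N}(\mc{A}_2,\tau_2)}$ is
\[\det\Omega(\mc{B})_v^{\mc{O}_F}\otimes\det\Omega(\mc{B}^t)_v^{\mc{O}_F},\]
the determinant of the $v$-isotypic component under $\mc{O}_F$ of the Hodge bundle (and its dual) of $\mc{B}$. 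Here $\Omega(\mc{B})_v^{\mc{O}_F}$ is a rank-two $\mc{O}_K$-module whose $\tau_1$- and $\conj{\tau_1}$-eigenspaces come from $\mc{A}_1$ and $\mc{A}_2$ respectively, so the construction is symmetric in the two lifts of $v$. A direct comparison shows that the Hermitian metric on this $v$-isotypic line bundle coming from the Hodge pairing agrees with the metric inherited from the definitions of the $\widehat{\mc{N}(\mc{A}_i,\tau_i)}$.

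Second, I would show the Arakelov degree of this $v$-isotypic line bundle does not depend on the choice of nearby pair by connecting any two nearby pairs through a chain of elementary moves. For two pairs sharing the same $v$ but with different common partial CM-type $\psi=\Phi_1\cap\Phi_2$, connect them by flipping $\psi$ at one place $u\ne v$ at a time; for two pairs at different $v\ne v'$, supplement the argument using the decomposition $\sum_{\tau\in\Phi}h(\Phi,\tau)=h(\Phi)+\tfrac{1}{4[E_\Phi:\Q]}\log(d_\Phi d_{\conj\Phi})$ recalled in the paper, summed over a carefully chosen family of CM-types, in order to transfer the invariance from $v$ to $v'$.

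The main obstacle is showing that an atomic flip at $u\ne v$ preserves the Arakelov degree. The flip replaces $\mc{B}$ by a genuinely different abelian variety $\mc{B}'$ (not isogenous to $\mc{B}$), so the comparison cannot be made by a direct isogeny. Instead one uses CM theory and the theory of $p$-divisible groups with $\mc{O}_E$-action at primes of $K$ above $v$ and above $u$, pinpointing the $v$-isotypic piece up to canonical isomorphism. At archimedean places of $K$, the contribution to the degree is a period integral that depends only on the $v$-isotypic Hodge data and is insensitive to the choice of lift at $u$. At finite places above $u$, the change in integral structure affects only the $u$-isotypic component and cancels symmetrically between $h(\Phi_1,\tau_1)$ and $h(\Phi_2,\tau_2)$. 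Once these local compatibilities are established, the invariance follows in both steps, completing the proof.
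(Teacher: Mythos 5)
The paper does not prove this statement; it imports it verbatim from \cite[Thm.\ 2.7]{Yua18}, where it is one of the central technical results. So there is no internal proof to compare against, but your proposal can be assessed on its own terms, and it has a genuine gap at its load-bearing step.

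The problem is the ``atomic flip'' at a place $u\neq v$. When you replace $\Phi_1\cap\Phi_2$ by its flip at $u$, the abelian varieties $\mc{A}_1,\mc{A}_2$ are replaced by $\mc{A}_1',\mc{A}_2'$ with genuinely different CM-types, hence not isogenous to the originals; there is no canonical map between the line bundles $\mc{N}(\mc{A}_i,\tau_i)$ and $\mc{N}(\mc{A}_i',\tau_i)$ along which to compare degrees. Your assertion that ``at archimedean places the contribution is a period integral that depends only on the $v$-isotypic Hodge data and is insensitive to the choice of lift at $u$'' is not something one can invoke: the period of a CM abelian variety of type $\Phi$ at an embedding $\tau\in\Phi$ depends a priori on the whole of $\Phi$, not just on $\tau$, and establishing that the specific combination $h(\Phi_1,\tau_1)+h(\Phi_2,\tau_2)$ is insensitive to the choice of lift at $u$ is precisely the content of the theorem. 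Likewise, the claimed symmetric cancellation ``at finite places above $u$'' is stated with no mechanism; nothing forces the two $u$-isotypic changes, which live on two non-isogenous abelian schemes, to cancel. As written, this step restates the goal rather than proving it.

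The actual proof in \cite{Yua18} supplies the missing geometric input via a quaternionic Shimura curve (the $|\Sigma|=1$ case of the machinery in Sections 3--6 of the present paper): take $B/F$ split at exactly the archimedean place below $\tau_1$, embed $E\hookrightarrow B$, and compare the arithmetic Hodge bundle of $\mc{X}_U$ with the Kodaira--Spencer isomorphism on an auxiliary PEL-type $\mc{X}'_{U'}$. This identifies $h(\Phi_1,\tau_1)+h(\Phi_2,\tau_2)$, up to explicit discriminant terms, with the arithmetic degree of $\widehat{\mc{L}_U}$ at a CM point of $X_U$. Since $X_U$ and the CM point depend only on $B$ and the embedding $E\hookrightarrow B$, not on the completion of $\tau_1$ to a full CM-type, invariance at a fixed $v$ follows; independence of $v$ then follows from Galois equivariance of $h(\Phi,\tau)$, namely $h(\sigma\Phi,\sigma\tau)=h(\Phi,\tau)$, combined with transitivity of $\Gal(E^{\mathrm{Gal}}/\Q)$ on $\Hom(F,\clos{\Q})$. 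No purely linear-algebraic rearrangement of the $\mc{N}(\mc{A}_i,\tau_i)$'s can substitute for this global comparison. Your second step is also underdetermined: the decomposition $\sum_{\tau\in\Phi}h(\Phi,\tau)=h(\Phi)+\tfrac{1}{4[E_\Phi:\Q]}\log(d_\Phi d_{\conj{\Phi}})$ gives, after summing over $\Phi$, only one linear relation among the $g$ putative constants $C_v$, which is far from forcing them to coincide.
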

	
	Finally, we compare $h(\Phi)$ with its constituents $h(\Phi, \tau)$.
 
        \begin{defn}\label{def:CM Type Determinant}
            Let $\Psi \subset \Hom(E, \C)$ be any subset, not necessarily a (partial) CM-type. The reflex field $E_\Psi \subset E^{\Gal}$ is the subfield of the Galois closure of $E$ fixed by all automorphisms that fix $\Psi$. The trace map $\Tr_\Psi \colon E \to E_\Psi$ is given by $\Tr_\Psi(x) = \sum_{\tau \in \Psi} \tau(x)$.

            We can decompose $E_\Psi \otimes_\Q E \cong \widetilde{E_\Psi} \times \widetilde{E_{\comp{\Psi}}}$ where the trace of the action of $E$ on $\widetilde{E_\Psi}$ is $\Tr_\Psi$ and the trace of the action on $\widetilde{E_{\comp{\Psi}}}$ is $\Tr_{\comp{\Psi}}$. Let $\mf{d}_\Psi$ be the relative discriminant of the image of $\mc{O}_{E_\Psi} \otimes_\Z \mc{O}_E$ in $\widetilde{E_\Psi}$ over $\mc{O}_{E_\Psi}$, and let $d_\Psi$ be the positive generator of the $N_{E_\Psi/\Q}(\mf{d}_\Psi)$.
        \end{defn}
        
	\begin{thm}[{{\cite[Thm 2.3]{Yua18}}}]
		\[h(\Phi) - \sum_{\tau \in \Phi} h(\Phi, \tau) = \frac{-1}{4[E_\Phi:\Q]} \log (d_\Phi d_{\conj{\Phi}}).\]
	\end{thm}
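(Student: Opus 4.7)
The plan is to interpret both sides of the identity as arithmetic degrees of metrized line bundles on $\Spec\mc{O}_K$ and then compare them via the $\mc{O}_E$-eigenspace decomposition of the Hodge bundle. Since $A^t$ has CM type $(\mc{O}_E,\conj\Phi)$, and since the complex conjugate of $A$ also has CM type $\conj\Phi$, Colmez's theorem yields $h(\conj\Phi) = h(\Phi)$, hence $h(A^t) = h(A) = h(\Phi)$. Consequently
\[
2h(\Phi) = \frac{1}{[K:\Q]}\,\widehat{\deg}\bigl(\widehat{\omega(\mc{A})}\otimes\widehat{\omega(\mc{A}^t)}\bigr),
\]
and by definition $2\sum_{\tau\in\Phi}h(\Phi,\tau) = \frac{1}{[K:\Q]}\widehat{\deg}\bigotimes_{\tau\in\Phi}\widehat{\mc{N}(\mc{A},\tau)}$. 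The theorem therefore reduces to computing the difference of these two arithmetic degrees.

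The geometric input is the injective $\mc{O}_K$-module map
\[
\Omega(\mc{A}) \hookrightarrow \bigoplus_{\tau\in\Phi}\Omega(\mc{A})_\tau
\]
assembled from the $\mc{O}_E$-action, which becomes an isomorphism after $\otimes\Q$ since $K\otimes_\Q E\cong\prod_\tau K_\tau$, but has finite cokernel integrally, controlled by the failure of $\mc{O}_K\otimes_\Z\mc{O}_E \to \prod_{\tau\in\Phi}\mc{O}_K$ to be surjective. By the very definition of $\mf{d}_\Phi$ in Definition \ref{def:CM Type Determinant}, the Fitting ideal of this cokernel is controlled by $\mf{d}_\Phi\mc{O}_K$, and a parallel construction for $\mc{A}^t$ gives a version controlled by $\mf{d}_{\conj\Phi}$. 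Taking determinants and tensoring yields an inclusion
\[
\omega(\mc{A})\otimes\omega(\mc{A}^t) \hookrightarrow \bigotimes_{\tau\in\Phi}\mc{N}(\mc{A},\tau),
\]
whose finite-place contribution to the arithmetic degree difference works out, via a local computation, to $\frac{[K:\Q]}{2[E_\Phi:\Q]}\log(d_\Phi d_{\conj\Phi})$, with the factor $[K:E_\Phi]$ arising from base change along $\mc{O}_{E_\Phi}\hookrightarrow\mc{O}_K$.

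For the archimedean places, I would verify that the Hodge metric $\|\alpha\|_v^2 = (2\pi)^{-g}\abs{\int_{A_v(\C)}\alpha\wedge\conj\alpha}$ on $\omega(\mc{A})\otimes\omega(\mc{A}^t)$ agrees, under the canonical identification $\Omega(A^t)\cong\conj{\Omega(A)}^\vee$, with the tensor-product metric on $\bigotimes_\tau\mc{N}(\mc{A},\tau)$ induced by the perfect pairing $\Omega(A^t)\otimes\Omega(A)\to\C$. Both reduce to the determinant of the same sesquilinear pairing decomposed along the $\tau$-eigenspaces, with the $(2\pi)^{-g}$ normalization absorbed. Hence the archimedean parts cancel, the arithmetic degree difference equals the finite contribution above, and dividing by $2[K:\Q]$ gives the claimed formula. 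The sign is correct because $\bigotimes_\tau\mc{N}(\mc{A},\tau)$ strictly contains $\omega(\mc{A})\otimes\omega(\mc{A}^t)$, forcing $\sum h(\Phi,\tau) \ge h(\Phi)$.

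The main obstacle will be the local discriminant bookkeeping: one has to pin down the Fitting ideal of the cokernel of $\Omega(\mc{A})\hookrightarrow\bigoplus_\tau\Omega(\mc{A})_\tau$ precisely in terms of $\mf{d}_\Phi$, then correctly track the $[K:E_\Phi]$ contribution from base change in concert with the $\tfrac{1}{2}$ built into the definition of $h(\Phi,\tau)$, so as to extract the exact coefficient $\tfrac{1}{4[E_\Phi:\Q]}$ with the right sign. The archimedean comparison is essentially formal, and Colmez's identity $h(\conj\Phi) = h(\Phi)$ is known.
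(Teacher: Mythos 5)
The paper does not prove this statement; it is imported as \cite[Thm.\ 2.3]{Yua18}, so there is no in-paper argument to compare your sketch against. What you outline does reproduce the strategy of the cited Yuan--Zhang proof: reinterpret $2h(\Phi)$ and $2\sum_\tau h(\Phi,\tau)$ as arithmetic degrees of $\widehat{\omega(\mc{A})}\otimes\widehat{\omega(\mc{A}^t)}$ and $\bigotimes_\tau\widehat{\mc{N}(\mc{A},\tau)}$, respectively, and then compute the cokernel of the eigenspace decomposition $\Omega(\mc{A})\hookrightarrow\bigoplus_{\tau\in\Phi}\Omega(\mc{A})_\tau$ in terms of $\mf{d}_\Phi$.

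Two precision notes. First, $h(\conj\Phi)=h(\Phi)$ is not a consequence of Colmez's theorem, which only asserts that $h(A_\Phi)$ depends on $\Phi$ alone; the equality comes from the elementary observation that the complex-conjugate abelian variety $\conj{A}$ has CM type $\conj\Phi$ while $h(\conj{A})=h(A)$ (equivalently, $A^t$ is isogenous to $A$ and the Faltings-height defect of the relevant isogeny vanishes). Second, because the discriminant ideal scales as the square of the module index, the Fitting ideal of the cokernel of $\Omega(\mc{A})\hookrightarrow\bigoplus_\tau\Omega(\mc{A})_\tau$ is $\mf{d}_\Phi^{1/2}\mc{O}_K$, not $\mf{d}_\Phi\mc{O}_K$; the coefficient $\tfrac{[K:\Q]}{2[E_\Phi:\Q]}\log(d_\Phi d_{\conj\Phi})$ that you ultimately claim is nonetheless correct, since the half-power combines with the factor $[K:E_\Phi]$ from base change and the $\tfrac{1}{2}$ in the definition of $h(\Phi,\tau)$ to give exactly $\tfrac{1}{4[E_\Phi:\Q]}$. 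The archimedean comparison is the other genuine piece of work --- the $(2\pi)^{-g}$ in the Faltings norm has to be checked against the per-eigenspace sesquilinear pairing --- but you flag all of this, and as a plan the proposal is sound.
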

	
	\section{Quaternionic Shimura Varieties}\label{sec:Quaternionic Shimura Variety}
	
	We fix a totally real field $F/\Q$ of degree $g$. Let $\Sigma \subset \Hom(F, \R)$ be the subset of places of $F$ and let $B/F$ be a quaternion algebra over $F$ that is split at infinity precisely at $\Sigma$, which means that
	\[B \otimes_\Q \R \cong \prod_{\tau \in \Sigma} M_2(\R)_\tau \oplus \prod_{\sigma \not \in \Sigma} \mb{H}_\sigma.\]
	From this quaternion algebra, we will construct three related quaternionic Shimura varieties and relate them.

	We are primarily interested in the Shimura variety $X$ associated with the group $G = \Res_{F/\Q} B^\times$. However, this Shimura datum does not parametrize abelian varieties and is of abelian type. We will follow the approach of \cite{Car86} by finding a unitary Shimura datum $G'$ that has the same derived group as $G$, and is of PEL type which will give us a nice description of the integral models of $X'$ in terms of abelian varieties. Then following \cite{Kis10, Kis18}, we will give an integral model for $X$ by transporting the connected components of $X'$.
	
	We start with the primary Shimura variety of study, the one associated to the group $G = \Res_{F/\Q} (B^\times)$. Let $h\colon \C^\times \to G(\R)$ be a cocharacter defined by
	\[h(a + bi) = \paren*{\prod_{\tau \in \Sigma} \begin{pmatrix}
			a & b\\-b & a
		\end{pmatrix}_\tau, \prod_{\sigma \not \in \Sigma} 1_\sigma} \in \prod_{\tau \in \Sigma} M_2(\R)_\tau \times \prod_{\sigma \not \in \Sigma} \mb{H}_\sigma.\]
        We can identify the $G(\R)$-conjugacy class of $h$ with $(\mc{H}^{\pm})^{\abs \Sigma}$, where $\mc{H}^\pm \coloneqq \C \backslash \R$, by sending $ghg^{-1} \mapsto \prod_{\tau \in \Sigma} g_\tau(i_\tau)$, where $g_\tau \in \GL_2(\R)_\tau$ is the $\tau$ component of $g$, and acts on $i$ through M\"obius transformations. From the Shimura datum $(G, (\mc{H}^\pm)^{\Sigma})$, we get a Shimura variety $X_U$ for each open compact subgroup $U \subset G(\mb{A}_f)$ that has a complex uniformization given by
	\[X_U(\C) = G(\Q) \backslash (\mc{H}^\pm)^{ \Sigma} \times G(\mb{A}_f)/U.\]
	The reflex field $E_X \coloneqq E(G, (\mc{H}^\pm)^{\Sigma})$ of $X$ is the subfield of $\C$ fixed by the automorphisms of $\C$ that fix $\Sigma \subset \Hom(F, \R)$. The Shimura variety $X_U$ has a canonical model over $E_X$ whose complex points have the above uniformization (see \cite{Mil90}).
	
	Let $N_{B/F}\colon B^\times \to F^\times$ be the reduced norm on $B$. The derived group is $G^{\der} = \ker\paren*{N_{B/F}\colon \Res_{F/\Q} B^\times \to \Res_{F/\Q} \mb{G}_m}$, the elements of $B$ with norm $1 \in F$, and its adjoint group is $\Res_{F/\Q} B^\times/F^\times$.
	
	We now introduce two auxiliary Shimura data that have the same derived group and the same adjoint group as $G$. Thus, their associated Shimura varieties have isomorphic connected components to $X$. Let $E/F$ be a CM extension such that there is an embedding $E \hookrightarrow B$. Let
	\[G'' \coloneqq \Res_{F/\Q} (B^\times \times E^\times)/F^\times,\]
	where $F^\times \hookrightarrow B^\times \times E^\times$ by $a \mapsto (a, a^{-1})$. Let $\Phi\colon E\otimes_\Q \R \overset{\sim}{\to} \C^g$ be a full CM-type of $E$. Split $\Phi$ into partial CM-types by
	\[\phib = \{\sigma \in \Phi : \sigma|_F \in \Sigma\},\]
	and
	\[\phib' = \{\sigma \in \Phi : \sigma|_F \not \in \Sigma\},\]
	so that $\phib$ and $\phib'$ are complementary partial CM-types. These partial CM-types give maps $\phib\colon E \to \C^{\Sigma}$ and $\phib'\colon E \to \C^{\comp{\Sigma}}$. Identify $E \otimes_\Q \R$ with $\C^g$ through the CM-type $\Phi$. Define the cocharacter $h_E\colon \C^\times \to E \otimes_\Q \R$ to be
	\[h_E(z) = (\phib(1), \phib'(z)) \in \C^g.\]
	We can now define the cocharacter $h'' \colon \C^\times \to G''(\R)$ as the composition of the map $(h(z), h_E(z))$ with the quotient by $F^\times$-action map. As before, the $G''(\R)$-conjugacy class of $h''$ can be identified with $(\mc{H}^{\pm})^\Sigma$.
	
	There exists a well defined norm $\nu\colon G'' \to \Res_{F/\Q} \mb{G}_m$ given by mapping
 \[(b, e) \mapsto N_{B/F}(b)N_{E/F}(e).\]
    We use this norm to define an algebraic subgroup
	\[G' \coloneqq G'' \times_{\Res_{F/\Q} \mb{G}_m}\mb{G}_m,\]
	which consists of elements of $G''$ whose norm lies in $\Q^\times \subset F^\times$. For our cocharacter $h''$, its norm is $\nu(h''(a + bi)) = a^2 + b^2 \in \R^\times \subset (F \otimes_\Q \R)^\times$. Hence $h''$ factors through a map $h' \colon \C^\times \to G'(\R)$. The $G'(\R)$ conjugacy class of $h'$ can be identified with $(\mc{H}^\pm)^{\abs\Sigma}$ as well. For open and compact subgroups $U' \subset G'(\mb{A}_f)$ and $U'' \subset G''(\mb{A}_f)$, we get Shimura varieties $X'_{U'}$ and $X''_{U''}$ with complex uniformizations
	\[X'_{U'}(\C) = G'(\Q) \backslash (\mc{H}^\pm)^{\Sigma} \times G'(\mb{A}_f) / U',\]
	and
	\[X''_{U''}(\C) = G''(\Q) \backslash (\mc{H}^\pm)^{\Sigma} \times G''(\mb{A}_f) / U''.\]
	The reflex fields of these Shimura varieties $E_{X'} \coloneqq E(G',(\mc{H}^\pm)^{\Sigma})$ and $E_{X''} \coloneqq E(G'', (\mc{H}^\pm)^{\Sigma})$ are the same, and equal to the subfield of $\C$ fixed by all automorphisms of $\C$ fixing $\phib' \subset \Hom(E, \C)$. If an automorphism of $\C$ fixes $\phib'$, then it fixes $\Sigma$ as well. Therefore, the reflex field $E_X$ is a subfield of $E_{X'} = E_{X''}$.

	We now describe the abelian varieties which $X'$ parametrizes. Let $V = B$ be viewed as a $\Q$-vector space with a natural left action by $E$, and choose $\gamma \in E \subset B$ so that $\conj{\gamma} = -\gamma$. We define a similitude $\psi\colon V \times V \to \Q$ by
	\[\psi(v, w) = \Tr_{F/\Q} \Tr_{B/F}(\gamma v \conj{w}),\]
	where $\conj{w}$ is the canonical involution on $B$. This is a nondegenerate alternating form, and $\psi(ev, w) = \psi'(v, e^*w)$ for all $v, w \in V$ and $e \in E$, where the involution $e^* = \conj{e}$ is just complex conjugation on $E$. We define a left action of $(B \times E)^\times/F^\times$ on $V$ by setting $(b, e) \cdot v = ev\conj{b}$. In this way, we can identify $G'$ with $E$-linear automorphisms of $V$ with rational norm
	\[G' = \{g \in \GL_{E}(V) : \psi(gv, gw) = \nu(g) \cdot \psi(v, w)\text{ for some }\nu(g) \in \mb{G}_m\}.\]
	
	The action of $\C$ on $V_\R$ through the morphism $h'$ induces a Hodge structure on $V$ of weight $1$. We can choose $\gamma$ such that $\psi$ induces a polarization satisfying $\psi(v, h'(i)v) \ge 0$ for all $v \in V_\R$, and hence $(V, \psi)$ is a symplectic $(E, *)$-module.
	
	Thus, by \cite[Thm 8.17]{Mil05}, the pair $(G', (\mc{H}^\pm)^{\Sigma})$ is PEL Shimura datum and $X'_{U'}$, for $U'$ small enough, represents the functor, for any test scheme $S$ over $E_{X'}$, whose $S$-points are isomorphism classes of quadruples $(A, \iota, \theta, \kappa U')$ where
	\begin{enumerate}
		\item $A/S$ is an abelian scheme of relative dimension $2g$;
		\item $\iota\colon E \to \End(A/S)\otimes_\Z \Q$ is an injection such that the action of $\iota(E)$ on $\Lie(A/S)$ has trace given by
		\[\Tr(\ell, \Lie(A/S)) = \Tr_{\phib \sqcup \phib'}(\ell) + \Tr_{\conj{\phib} \sqcup \phib'}(\ell)\]
		for all $\ell \in E$, where for a CM-type $(E, \Phi)$ the trace map $\Tr_\Phi\colon E \to \C$ is defined as
		\[\Tr_\Phi(e) = \sum_{\sigma \in \Phi} \sigma(e);\]
		\item $\theta\colon A \to A^t$ is a polarization whose Rosati involution on $\End(A/S)_\Q$ induces the involution $\gamma \mapsto \gamma^*$ on $E$;
		\item and $\kappa\colon H_1(A, \mb{A}_f) \simeq V_{\mb{A}_f}$ is an isomorphism of $\mb{A}_{E, f}$-modules that respects the bilinear forms on both factors, up to an element in $\mb{A}_f^\times$.
	\end{enumerate}

	
	\section{Integral Models}\label{sec:Integral Model}
	
	To construct integral models for these Shimura varieties, we first use the PEL structure of $X'$ to get an integral model $\mc{X}'$ which parametrizes abelian schemes. Then we will transfer the integral model of $X'$ to construct integral models for $X$ and $X''$, as done in \cite{Kis10, Kis18}.
	
	\subsection{PEL Type \texorpdfstring{$\mc{X}'$}{X'}}
	
	We construct $\mc{X}'$ following \cite{Rap96, Pap13}. Let $p \in \Z$ be a prime number. Let $\mf{p}$ be a prime of $F$ lying above $p$. Set $\mc{O}_{E, \mf{p}} = \mc{O}_{E} \otimes_{\mc{O}_F} \mc{O}_{F, \mf{p}}$.
    \begin{itemize}
        \item If $B$ is unramified at $\mf{p}$, then $B_\mf{p} \cong M_2(F_\mf{p})$. Choose an isomorphism such that $\mc{O}_{E, \mf{p}} \subset M_2(\mc{O}_{F, \mf{p}})$ and set $\Lambda_\mf{p} = M_2(\mc{O}_{F, \mf{p}})$.

        \item If $B$ is ramified at $\mf{p}$, then $B_\mf{p}$ is a division algebra over $F_\mf{p}$ and there is a unique choice of a maximal order $\mc{O}_{B, \mf{p}}$, which must contain $\mc{O}_{E, \mf{p}}$. We set $\Lambda_\mf{p} = \mc{O}_{B, \mf{p}}$.
    \end{itemize}
    From this choice of $\mc{O}_{E, \mf{p}}$-lattice $\Lambda_\mf{p}$, construct a chain of lattices by taking
    \[\mc{L}_{\mf{p}} = \{\cdots \subset \omega_{\mf{q}}\Lambda_{\mf{p}} \subset \Lambda_{\mf{p}} \subset \omega_{\mf{q}}^{-1} \Lambda_{\mf{p}} \subset \cdots\},\]
    where $\omega_{\mf{q}}$ is a uniformizer of $E_\mf{q}$, taken to be a uniformizer of $F_\mf{p}$ if $\mf{q}$ is unramified over $\mf{p}$. From these chains, we can construct a multichain $\mc{L}_p$ of $\mc{O}_{E} \otimes_\Z \Z_p$-lattices which consist of all lattices $\Lambda_p$ which can be written as
    \[\Lambda_p = \oplus_{\mf{p} \mid p} \Lambda_\mf{p}, \quad \Lambda_\mf{p} \in \mc{L}_{\mf{p}}.\]

    We now require that $E$ is ramified above all finite primes $\mf{p}$ of $F$ that also ramify in $B$. Let $\delta_{\mf{p}/p} \in F_\mf{p}$ be a generator for the different ideal $\mf{d}_{F_\mf{p}/\Q_p}$ of $F_\mf{p}/\Q_p$.
    \begin{lem}
        We can choose $\gamma \in E^\times$ such that
        \begin{itemize}
            \item $\gamma = -\conj{\gamma}$;
            \item $\gamma \in \delta_{\mf{p}/p}^{-1}\mc{O}_{E, \mf{p}}^\times$;
            \item and $\psi(v, h'(i)v) > 0$ for all $v \in V_\R \backslash \{0\}$.
        \end{itemize}

        Moreover, under this choice of $\gamma$, the multichain of $\mc{O}_E \otimes_\Z \Z_p$-lattices $\mc{L}_p$ is self-dual with respect to the $\Q_p$-linear alternating form $\psi_p \colon V_p \times V_p \to \Q_p$.
    \end{lem}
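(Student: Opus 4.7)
The plan is to construct $\gamma$ by weak approximation on $F$. Fix any nonzero purely imaginary element $\alpha \in E$, so that $\alpha^2 \in F^\times$ and $\conj{\alpha} = -\alpha$; such an $\alpha$ spans the $1$-dimensional $F$-subspace of trace-zero elements of $E$. Every candidate $\gamma$ with $\conj{\gamma} = -\gamma$ therefore has the form $\gamma = f\alpha$ for some $f \in F^\times$, which makes the first bullet automatic.

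The second bullet is a prescription on $v_{\mf{p}}(f)$ at the finitely many primes $\mf{p} \mid p$ of $F$. For each $\mf{q} \mid \mf{p}$ in $E$ the condition $\gamma \in \delta_{\mf{p}/p}^{-1}\mc{O}_{E,\mf{p}}^\times$ asks that $v_{\mf{q}}(f\alpha) = -e_{\mf{q}/\mf{p}}v_{\mf{p}}(\delta_{\mf{p}/p})$. Using $v_{\mf{q}}(f) = e_{\mf{q}/\mf{p}}v_{\mf{p}}(f)$ together with the fact that $v_{\mf{q}}(\alpha)$ is independent of the choice of $\mf{q} \mid \mf{p}$ (since complex conjugation permutes the primes above $\mf{p}$ while $v(\conj{\alpha}) = v(-\alpha) = v(\alpha)$), this collapses to a single valuation condition on $v_{\mf{p}}(f) \in \Z$. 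The third bullet is a sign condition on $\tau(f)$ at each archimedean place $\tau$ of $F$, coming from positive-definiteness of the quadratic form $v \mapsto \psi(v, h'(i)v)$ on each archimedean factor of $V_\R$. These are finitely many, independent, local conditions at disjoint places of $F$, so weak approximation produces an $f \in F^\times$ satisfying all of them simultaneously.

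For the self-duality of the multichain, the claim is local at each $\mf{p} \mid p$, where we compute the dual of $\Lambda_\mf{p}$ under $\psi_\mf{p}(v,w) = \Tr_{F_\mf{p}/\Q_p}\Tr_{B_\mf{p}/F_\mf{p}}(\gamma v \conj{w})$. The outer trace contributes a factor of $\delta_{\mf{p}/p}^{-1}$ in the dual, while the reduced-trace pairing on $B_\mf{p}$ has $\Lambda_\mf{p}$ as a self-dual $\mc{O}_{F_\mf{p}}$-lattice in the split case $\Lambda_\mf{p} = M_2(\mc{O}_{F,\mf{p}})$, and has dual $\mf{d}_{B_\mf{p}/F_\mf{p}}^{-1}\mc{O}_{B,\mf{p}} = \omega_{\mf{q}}^{-1}\mc{O}_{B,\mf{p}}$ of $\mc{O}_{B,\mf{p}}$ in the ramified case (here we use the assumption that $E$ ramifies at $\mf{p}$, so that the uniformizer $\omega_{\mf{q}}$ of $E_{\mf{q}}$ simultaneously serves as a uniformizer of the division algebra $B_\mf{p}$). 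Left-multiplication by $\gamma \in \delta_{\mf{p}/p}^{-1}\mc{O}_{E,\mf{p}}^\times$ exactly cancels the $\delta_{\mf{p}/p}^{-1}$ factor. The result is $\Lambda_\mf{p}^\vee = \Lambda_\mf{p}$ in the split case and $\Lambda_\mf{p}^\vee = \omega_{\mf{q}}^{-1}\Lambda_\mf{p}$ in the ramified case; either way $\Lambda_\mf{p}^\vee$ lies in the chain $\mc{L}_\mf{p}$, so $\mc{L}_p = \bigoplus_\mf{p}\mc{L}_\mf{p}$ is self-dual.

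The main obstacle is the coordinated local accounting in the ramified case: the different of $B_\mf{p}/F_\mf{p}$, the different $\delta_{\mf{p}/p}$ of $F_\mf{p}/\Q_p$, the uniformizer $\omega_{\mf{q}}$ of $E_{\mf{q}}$, and the prescribed valuation of $\gamma$ all have to fit together. The standing hypothesis that $E$ ramifies above every finite prime where $B$ ramifies is exactly what enables the identification of $\omega_{\mf{q}}$ with a uniformizer of $B_\mf{p}$, and thereby the absorption of $\mf{d}_{B_\mf{p}/F_\mf{p}}$ into the lattice chain.
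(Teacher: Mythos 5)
Your proof follows essentially the same route as the paper's: a local/global approximation argument to produce $\gamma$, followed by a prime-by-prime computation of the dual lattice. Two presentational differences are worth recording. For existence, you parametrize the anti-symmetric elements of $E$ by the one-dimensional $F$-subspace $F\alpha$ and then apply weak approximation to $F^\times$; the paper instead just asserts that the anti-symmetric elements of $E$ are dense in the product of the relevant local factors and that the three conditions are ``open and non-empty.'' Your version is tighter in spirit. For the ramified part of the self-duality, you invoke the inverse different of the division algebra $B_\mf{p}$ being $\omega_\mf{q}^{-1}\mc{O}_{B,\mf{p}}$, while the paper picks an explicit $j\in B_\mf{p}$ with $j^2\in\mc{O}_{F,\mf{p}}^\times$, writes $\mc{O}_{B,\mf{p}}=\mc{O}_{E,\mf{p}}\oplus\mc{O}_{E,\mf{p}}j$, and computes the reduced-trace pairing by hand; both yield $\Lambda_\mf{p}^\vee=\omega_\mf{q}^{-1}\Lambda_\mf{p}$.

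There is, however, a gap in the middle step of your existence argument — and it is worth noting that the paper's own density argument shares it. You claim the second bullet ``collapses to a single valuation condition on $v_\mf{p}(f)\in\Z$.'' Writing $\gamma=f\alpha$, the requirement at each $\mf{q}\mid\mf{p}$ is
$e_{\mf{q}/\mf{p}}\,v_\mf{p}(f) = -e_{\mf{q}/\mf{p}}\,v_\mf{p}(\delta_{\mf{p}/p}) - v_\mf{q}(\alpha)$,
which has an integer solution for $v_\mf{p}(f)$ only when $e_{\mf{q}/\mf{p}}\mid v_\mf{q}(\alpha)$. At a prime $\mf{p}$ where $E/F$ is tamely ramified, every anti-symmetric element of $E_\mf{q}$ is an $F_\mf{p}$-multiple of a uniformizer of $E_\mf{q}$, hence has $v_\mf{q}$ odd, while $e_{\mf{q}/\mf{p}}=2$: the local condition is then empty. (Concretely: $E=\Q(\sqrt{-3})$, $F=\Q$, $p=3$; here $\delta_{\mf{p}/p}=1$, so the bullet demands $v_\mf{q}(\gamma)=0$, yet every anti-symmetric $\gamma$ has odd $v_\mf{q}$.) The paper asserts non-emptiness without checking it, so this is really a defect in the lemma as stated rather than something special to your write-up. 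The ``moreover'' conclusion is unaffected, because $\mc{L}_\mf{p}$ is $\omega_\mf{q}$-periodic and therefore its self-duality is insensitive to the precise $\mf{q}$-valuation of $\gamma$; but a self-contained proof of the bullet as written requires either weakening it to membership up to powers of $\omega_\mf{q}$, or replacing $\mf{d}_{F_\mf{p}/\Q_p}$ by $\mf{d}_{E_\mf{q}/\Q_p}$ so the parity matches. You should make this adjustment explicit rather than sweeping it into ``collapses to a single valuation condition.''
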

    \begin{proof}
        The anti-symmetric elements of $E$ are dense in the anti-symmetric elements of $(E \otimes_\Q \Q_p) \oplus (E \otimes_\Q \R)$ and the conditions given are all open and non-empty, meaning that we can find such a $\gamma$.

        To show that $\mc{L}_p$ is a self-dual multichain, it suffices to look locally at each prime $\mf{p}$ of $F$. The alternating form $\psi_p$ is the sum over all primes $\mf{p}$ of
        \[\psi_\mf{p}(v, w) = \Tr_{F_\mf{p}/\Q_p} \Tr_{B_\mf{p}/F_\mf{p}}(\gamma v \conj{w}).\]
        Thus, the dual of the lattice $\Lambda_\mf{p}$ with respect to $\psi_\mf{p}$ is
        \[\Lambda_{\mf{p}}^\vee = \{w \in V_\mf{p} : \psi_\mf{p}(v, w) \in \Z_p \forall v \in \Lambda_\mf{p}\} = \{w \in V_\mf{p} : \Tr_{B_\mf{p}/F_\mf{p}}(\gamma v \conj{w}) \in \delta_{\mf{p}/p}^{-1}\mc{O}_{F, \mf{p}}\}.\]

        It suffice to take $\delta = \delta_{\mf{p}/p}^{-1}$. First assume that $\mf{p}$ is unramified in $B$. Under the isomorphism $B_\mf{p} \cong M_2(F_\mf{p})$, the involution on $B_\mf{p}$ is given by $\conj{\begin{psmallmatrix}
            a & b\\c & d
        \end{psmallmatrix}} = \begin{psmallmatrix}
            d & -b \\ -c & a
        \end{psmallmatrix}$ and the similitude is
        \[\Tr_{B_\mf{p}/F_\mf{p}}\paren*{\delta\begin{psmallmatrix}a & b\\c& d\end{psmallmatrix}\conj{\begin{psmallmatrix}a'&b'\\c'&d'\end{psmallmatrix}}} = \delta_{\mf{p}/p}^{-1}(ad' + bc' + a'd + b'c).\]
        From this, we see that $\Lambda_\mf{p}^\vee = \Lambda_\mf{p} = M_2(\mc{O}_{F, \mf{p}})$.

        If $\mf{p}$ is ramified in $B$, we required that $E$ is also ramified at $\mf{p}$. So, we can find an element $j \in B_\mf{p}$ such that $j^2 \in \mc{O}_{F, \mf{p}}^\times$ and $je = \conj{e}j$ for all $e \in E_\mf{p}$. For this choice of $j$, the unique maximal order can be written as $\mc{O}_{B, \mf{p}} = \mc{O}_{E, \mf{p}} + \mc{O}_{E, \mf{p}}j$. For $a + bj \in E_\mf{p} + E_\mf{p}j = B_\mf{p}$, the trace is $\Tr_{B_\mf{p}/F_\mf{p}} (a + bj) = \Tr_{E_\mf{p}/F_\mf{p}} (a)$. We thus have
        \[\Tr_{B_\mf{p}/F_\mf{p}}\paren*{\gamma(a + bj)(\conj{a' + b'j})} = \delta_{\mf{p}/p}^{-1} \Tr_{E_\mf{p}/F_\mf{p}} (a\conj{a'} - b\conj{b'}j^2).\]
        From this, we get that $\Lambda_{\mf{p}}^\vee = \omega_\mf{q}^{-1}\Lambda_\mf{p} \in \mc{L}_\mf{p}$.
    \end{proof}


    Let $U'_p = U'_p(0) \subset G'(\Q_p)$ be the parahoric subgroup of elements fixing the multichain $\mc{L}_p$. Let $U'^p\subset G'(\mb{A}_f^p)$ and set $U' \coloneqq U'_pU'^p \subset G'(\mb{A}_f)$. Choose a place $v'$ of $\mc{O}_{E_{X'}}$ lying above $p$. From the integral data of $(\mc{L}_p, U'_p)$, consider the functor $\mc{F}'_{U'_pU'^p}$ which associates to a test scheme $\mc{S}$ over $\Spec \mc{O}_{E_{X'}, v'}$ the set of isomorphism classes of quadruples $(\mc{A}, \iota, \theta, \kappa U'^p)$ where:
    
    \begin{enumerate}
		\item $\mc{A}/\mc{S}$ is an abelian scheme of relative dimension $2g$ up to isogeny of order prime to $p$;
		\item $\iota\colon \mc{O}_{E} \otimes_\Z \Z_{(p)} \to \End(\mc{A}/\mc{S}) \otimes_\Z \Z_{(p)}$ is a homomorphism satisfying the following Kottwitz condition. There is an identity of polynomial functions
            \[\det_{\mc{O}_S} (\iota(\ell); \Lie \mc{A}/S) = \prod_{\phi \in \phib} \phi(\ell)\conj{\phi(\ell)} \prod_{\phi' \in \phib'} \phi'(\ell)^2;\]
		\item $\theta\colon \mc{A} \to \mc{A}^t$ is a principle polarization whose Rosati involution on $\End(\mc{A}/S) \otimes \Z_{(p)}$ induces complex conjugation on $\mc{O}_{E, (p)}$;
		\item and $\kappa\colon H_1(\mc{A}, \mb{A}_f^p) \simeq V_{\mb{A}^p_f}$ is an isomorphism of skew $\mc{O}_{E, (p)}\otimes \mb{A}^p_f$-modules that respects the bilinear forms up to a constant in $(\mb{A}_f^p)^\times$.
	\end{enumerate}
	
	\begin{thm}
	    If $U'^p$ is sufficiently small, then the functor $\mc{F}'_{U'}$ is represented by a quasi-projective scheme $\mc{M}_{U'}$ over $\mc{O}_{E_{X'}, v'}$ whose generic fiber is $X'_{U'}$. Moreover, we have:
	    \begin{enumerate}
	        \item If $p$ is unramified in $E$, the scheme $\mc{M}_{U'}$ is smooth over $\mc{O}_{E_{X'}, v'}$;
	        \item The $p$-adic completion of $\mc{M}_{U'}$ along the basic locus has a $p$-adic uniformization by a Rapoport--Zink space 
	    \end{enumerate}
	\end{thm}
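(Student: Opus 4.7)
The plan is to apply the general representability theorems for PEL moduli problems of Rapoport--Zink \cite{Rap96} together with their extension to parahoric level structure by Pappas--Rapoport \cite{Pap13}. The previous lemma has already verified the essential input: the multichain $\mc{L}_p$ is self-dual with respect to the symplectic form $\psi_p$, and $(V, \psi)$ is a symplectic $(\mc{O}_{E, (p)}, *)$-module. What remains is primarily verification of compatibility and invocation of these theorems.

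First I would check that the Kottwitz determinant condition in item (2) is the one determined by the Hodge cocharacter $h'$. The cocharacter $h'$ induces a Hodge decomposition $V_\C = V^{-1, 0} \oplus V^{0, -1}$, and by unraveling the definition of $h_E$ together with the embedding $E \hookrightarrow B$, the $E$-action on $V^{-1, 0}$, which is naturally identified with $\Lie$ of the universal abelian scheme, has characteristic polynomial
\[\prod_{\phi \in \phib} (T - \phi(\ell))(T - \conj{\phi}(\ell)) \prod_{\phi' \in \phib'} (T - \phi'(\ell))^2,\]
whose constant term (up to sign) recovers the stated determinant. This matches the generic fiber trace condition from Section \ref{sec:Quaternionic Shimura Variety}.

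Second I would invoke \cite[Thm. 6.21]{Rap96}, in the parahoric generalization of \cite{Pap13}, to conclude that $\mc{F}'_{U'_pU'^p}$ is represented by a quasi-projective scheme $\mc{M}_{U'}$ over $\Spec \mc{O}_{E_{X'}, v'}$; the smallness hypothesis on $U'^p$ is used precisely to rigidify the moduli problem. Third I would identify the generic fiber with $X'_{U'}$: on $\C$-points, Hodge theory turns each moduli datum into a symplectic $(E, *)$-module with a cocharacter in the $G'(\R)$-conjugacy class of $h'$, and Riemann's theorem inverts this; the resulting bijection matches the complex uniformization $G'(\Q)\backslash(\mc{H}^\pm)^\Sigma \times G'(\mb{A}_f)/U'$, and coincidence of canonical models over $E_{X'}$ then follows from rigidity.

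The smoothness statement when $p$ is unramified in $E$ reduces, via the local model diagram of \cite{Rap96}, to smoothness of a product of Grassmannians, since all lattices in $\mc{L}_p$ are then hyperspecial; this is the standard Kottwitz computation. The $p$-adic uniformization along the basic locus is then \cite[Thm. 6.30]{Rap96} applied directly. The main obstacle is less conceptual than bookkeeping: one must verify that the parahoric data $(\mc{L}_p, U'_p)$ we have constructed fits the Pappas--Rapoport framework at the ramified primes of $B$, where $B_\mf{p}$ is a division algebra and the lattice chain is only self-dual after twist by $\omega_\mf{q}$. The previous lemma has handled precisely this subtlety, so the proof ultimately amounts to translating between our explicit data and the setup of the cited references.
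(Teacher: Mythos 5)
Your proposal is essentially correct and follows the same route as the paper: verify that the explicit data $(V, \psi, \mc{L}_p, U'_p)$ fits the Rapoport--Zink and Pappas--Rapoport framework, then invoke their general representability, local-model, and uniformization theorems. The paper's own proof is much terser, citing only \cite[Sec.\ 5]{Kot92} for the unramified/smooth case and \cite[Thm 6.50]{Rap96} for the $p$-adic uniformization, so the intermediate verifications you spell out (the Kottwitz determinant condition matching $h'$, the generic-fiber identification via Riemann's theorem, hyperspecial reduction to Grassmannians) are exactly the content the paper leaves implicit; the only discrepancy is your reference to \cite[Thm.\ 6.30]{Rap96} where the paper cites Thm 6.50 for the uniformization, a minor bookkeeping difference.
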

	\begin{proof}
	    This is the PEL moduli problem studied by \cite{Kot92} and \cite{Rap96}. The first case is covered by \cite[Sec. 5]{Kot92} and the second case is covered by \cite[Thm 6.50]{Rap96}.
	\end{proof}

    If $p$ is unramified in $E$, we set our integral model $\mc{X}_{U'}$ to be $\mc{M}_{U'}$. If $p$ is ramified in $E$, then $\mc{M}_{U'}$ is not necessarily flat. We explain how to construct a flat model following \cite{Pap13}. We first construct the corresponding local model for $\mc{F}'_{U'}$ following \cite[Def. 3.27]{Rap96}. Let $\mb{M}^{\mathrm{naive}}$ be the functor which associates to a locally Noetherian scheme $\mc{S}$ over $\Spec \mc{O}_{E_{X'}, v'}$ the set of $\mc{O}_{E, p} \otimes_{\Z_p} \mc{O}_S$-submodules $t_\Lambda \subset \Lambda_p \otimes_{\Z_p} \mc{O}_S$ such that
    \begin{enumerate}
        \item $t_\Lambda$ is a finite, locally free $\mc{O}_S$-module;
        \item For all $\ell \in \mc{O}_{E, p}$, there is an identity of polynomial functions
        \[\det_{\mc{O}_S}(\ell; t_\Lambda) =  \prod_{\phi \in \phib} \phi(\ell)\conj{\phi(\ell)} \prod_{\phi' \in \phib'} \phi'(\ell)^2;\]
        \item and $t_\Lambda$ is totally isotropic under the nondegenerate alternating pairing 
        \[\psi_{p, \mc{O}_S}\colon (\Lambda_p \otimes_{\Z_p} \mc{O}_S) \times (\Lambda_p \otimes_{\Z_p} \mc{O}_S) \to \mc{O}_S.\]
    \end{enumerate}
    This functor is represented by a closed subscheme of a Grassmannian. Let $\mc{P}/\Spec \Z_p$ be the group scheme whose $S$ points are $\Aut(\mc{L} \otimes_{\Z_p} \mc{O}_S)$ automorphisms of the multichain $\mc{L}$ that respect the similitude $\psi_p$. Then by \cite[3.30]{Rap96}, there is a smooth morphism of algebraic stacks of relative dimension $\dim G' = 5g$.
    \[\mc{M}_{U'} \to \left[\mb{M}^{\mathrm{naive}}/\mc{P}_{\mc{O}_{E_{X'}, v'}}\right]\]
    From this, we see that $\mb{M}^{\mathrm{naive}}$ controls the structure of $\mc{M}_{U'}$. While conjectured to be flat, it has since been shown by \cite{Pap00} that $\mb{M}^{\mathrm{naive}}$ is not necessarily flat when $p$ is ramified in the PEL datum. To remedy this, take $\mb{M}^{\mathrm{loc}}$ to be the flat scheme theoretic closure of $\mb{M}^{\mathrm{naive}} \otimes_{\mc{O}_{E_{X'}, v'}} E_{X', v'}$ in $\mb{M}^{\mathrm{naive}}$.

    \begin{prop}[{\cite[Thm. 9.1]{Pap13}}]
        The scheme $\mb{M}^{\mathrm{loc}}$ is normal and Cohen--Macaulay with reduced special fiber. It also admits an action by $\mc{P}_{\mc{O}_{E_{X'}, v'}}$ such that the natural inclusion $\mb{M}^{\mathrm{loc}} \to \mb{M}^{\mathrm{naive}}$ is $\mc{P}_{\mc{O}_{E_{X'}, v'}}$-equivariant.
    \end{prop}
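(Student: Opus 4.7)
The assertion comes in two parts: the $\mathcal{P}$-equivariance of the inclusion, and the geometric properties (normal, Cohen--Macaulay, reduced special fiber) of $\mathbb{M}^{\mathrm{loc}}$. The equivariance is essentially formal: the group scheme $\mathcal{P}_{\mathcal{O}_{E_{X'}, v'}}$ is flat over $\mathcal{O}_{E_{X'}, v'}$, and the generic fiber of $\mathbb{M}^{\mathrm{loc}}$ coincides with that of $\mathbb{M}^{\mathrm{naive}}$ and is visibly $\mathcal{P}$-stable. Since $\mathbb{M}^{\mathrm{loc}}$ is by construction the schematic (flat) closure of this generic fiber, and since $\mathcal{P}$ flat implies that $\mathcal{P} \times \mathbb{M}^{\mathrm{loc}} \hookrightarrow \mathcal{P} \times \mathbb{M}^{\mathrm{naive}}$ is again a flat closure, the action morphism restricted from $\mathcal{P} \times \mathbb{M}^{\mathrm{naive}} \to \mathbb{M}^{\mathrm{naive}}$ must land in $\mathbb{M}^{\mathrm{loc}}$. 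So I would dispose of this first.

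For the substantive geometric content, my plan is to reduce to a completely local question at each ramified prime. The multichain $\mathcal{L}_p = \bigoplus_{\mathfrak{p} \mid p} \mathcal{L}_\mathfrak{p}$ induces a product decomposition $\mathbb{M}^{\mathrm{naive}} \cong \prod_{\mathfrak{p} \mid p} \mathbb{M}^{\mathrm{naive}}_\mathfrak{p}$, and the flat closure commutes with products, so $\mathbb{M}^{\mathrm{loc}} \cong \prod_\mathfrak{p} \mathbb{M}^{\mathrm{loc}}_\mathfrak{p}$. At primes $\mathfrak{p}$ unramified in $E$ (equivalently, in $B$), the naive local factor is already smooth by the standard Grassmannian computation, and nothing remains to prove. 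The problem is thus concentrated at the finitely many primes where $E/F$ (hence $B$) ramifies.

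At such a ramified $\mathfrak{p}$, one has $\Lambda_\mathfrak{p} = \mathcal{O}_{B,\mathfrak{p}} = \mathcal{O}_{E,\mathfrak{p}} \oplus \mathcal{O}_{E,\mathfrak{p}} j$ with $je = \bar e j$, and the Kottwitz condition reads off as a signature-type condition in the local $\phib$-data restricted to $\mathfrak{p}$. The plan is to recognize $\mathbb{M}^{\mathrm{loc}}_\mathfrak{p}$ as a ramified unitary local model of the sort studied systematically by Pappas--Rapoport. Having done so, I would appeal to the identification of local models with (closures of) Schubert cells in appropriate partial affine flag varieties, via the coherence/splitting results of Pappas--Zhu: those Schubert varieties are known to be normal and Cohen--Macaulay, and the special fibers of their naturally flat integral models are reduced by Frobenius-splitting arguments. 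Transporting these properties back through the identification gives the three claimed geometric properties for $\mathbb{M}^{\mathrm{loc}}_\mathfrak{p}$.

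The main obstacle is precisely the last step: establishing normality, Cohen--Macaulayness, and reducedness of the special fiber for the local model at a ramified prime. In the quaternionic setting here, the dimensions are small enough ($B_\mathfrak{p}$ is 4-dimensional over $F_\mathfrak{p}$, and the local model cuts out a Lagrangian with a prescribed determinant) that one can imagine also pursuing a direct route: write explicit affine charts in coordinates dictated by the decomposition $\Lambda_\mathfrak{p} = \mathcal{O}_{E,\mathfrak{p}} + \mathcal{O}_{E,\mathfrak{p}} j$, exhibit $\mathbb{M}^{\mathrm{loc}}_\mathfrak{p}$ as the flat closure inside this chart by removing the embedded component visible in $\mathbb{M}^{\mathrm{naive}}_\mathfrak{p}$, and then verify normality by Serre's $(R_1) + (S_2)$ criterion and reducedness of the special fiber directly from the defining equations. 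Either route will work; the affine-flag-variety approach is conceptually cleaner, while the explicit approach is more self-contained.
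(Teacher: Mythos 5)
The paper does not prove this proposition internally: the statement is quoted verbatim as \cite[Thm.~9.1]{Pap13}, so the only meaningful comparison is between your outline and the strategy of the cited source. Your outline is a fair sketch of that proof. The equivariance argument is correct (the cleanest packaging is via the automorphism $(g,x) \mapsto (g,gx)$ of $\mc{P} \times \mb{M}^{\mathrm{naive}}$, which preserves the $\pi$-torsion ideal when $\mc{P}$ is $R$-flat). The reduction along $\mb{M}^{\mathrm{naive}} \cong \prod_{\mf{p} \mid p} \mb{M}^{\mathrm{naive}}_{\mf{p}}$ is valid, since over a DVR the saturation of a tensor-product ideal is the tensor product of the saturations. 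And the identification of $\mb{M}^{\mathrm{loc}}_{\mf{p}}$ with closures of Schubert cells, followed by coherence and Frobenius splitting, is indeed the engine of the cited theorem; you are effectively re-deriving the citation rather than circumventing it, which is all one can do here.

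One factual slip: you write ``primes $\mf{p}$ unramified in $E$ (equivalently, in $B$).'' That equivalence fails. The paper's running hypothesis is only that ramification of $B$ forces ramification of $E$; the converse is false, so $E$ can ramify at $\mf{p}$ while $B_{\mf{p}} \cong M_2(F_{\mf{p}})$ is split. In that case $\Lambda_{\mf{p}} = M_2(\mc{O}_{F,\mf{p}})$ rather than $\mc{O}_{E,\mf{p}} \oplus \mc{O}_{E,\mf{p}}j$, and the local factor is a ramified unitary local model with a single self-dual lattice rather than the two-step chain of the division-algebra case. Your explicit local description therefore covers only the $B$-ramified primes; the $E$-ramified-but-$B$-split primes also contribute a non-smooth naive local model and must be treated. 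This does not break the general Pappas--Zhu mechanism, which handles both cases uniformly, but as written your case analysis is incomplete.
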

    
    With this flat local model, we can define a flat integral model $\mc{X}'_{U'}$ for $X'_{U'}$ by pulling back $\mc{M}_{U'}$ to $\mb{M}^{\mathrm{loc}}$ to get the following cartesian diagram.
    \[\begin{tikzcd}
    \mc{X}'_{U'} \arrow[r] \arrow[d] &\mc{M}_{U'}  \arrow[d] \\ \left[\mb{M}^{\mathrm{loc}}/\mc{P}_{\mc{O}_{E_{X'}, v'}}\right] \arrow[r] & \left[\mb{M}^{\mathrm{naive}}/\mc{P}_{\mc{O}_{E_{X'}, v'}}\right]
    \end{tikzcd}\]
    The schemes $\mc{X}'_{U'}$ and $\mc{M}_{U'}$ have generic fiber equal to $X'_{U'}$ and since $\mb{M}^{\mathrm{loc}}$ is flat, our integral model $\mc{X}'_{U'}$ of $X'_{U'}$ is flat as well.

    We have defined integral models $\mc{X}'_{U'}$ when $U' = U'_pU'^p$ where $U'_p = U'_p(0)$ is maximally parahoric and $U'^p$ is sufficiently small. We want to define an integral model $\mc{X}'_{U'}$ when $U' = \prod_p U'_p(0)$ is maximal at all primes. We now show how to construct these integral models when $U'^p$ is big.

    Suppose that $p$ is unramified in $E$ so that $\mc{X'}_{U'}$ is smooth. For integral $m \ge 0$, let $U_p(m)$ denote
    \[U_p'(m) \coloneqq \set{g \in G'(\Q_p) : g\Lambda_p = \Lambda_p, g|_{p^{-m}\Lambda_p/\Lambda_p} \equiv 1}\]
    the subgroup of $U'_p$ that acts as the identity on $\Lambda_p/p^m\Lambda_p$. The nondegenerate alternating form $\psi_p$ on $\Lambda_p$ gives rise to a nondegenerate $*$-hermitian alternating form
    \[\inner{,}_{p, m}\colon p^{-m}\Lambda_p/\Lambda_p \times p^{-m}\Lambda_p/\Lambda_p \to p^{-m}\Z_p/\Z_p\]
    given by
    \[\inner{x, y}_{p, m} = \psi_p(p^mx, y).\]
    Then for $U'^p$ small enough, Mantovan (see \cite{Man05}) defines an integral model of $\mc{X}'_{U'_p(m)U'^p}$ over $\Spec \mc{O}_{E_{X'}, v'}$ by using the notion of a full set of sections. Let $\mc{F}'_{U'_p(m)U'^p}$ be the functor over $\mc{F}'_{U'_pU'^p} = \mc{F}'_{U'_p(0)U'^p}$ which associates to a locally Noetherian scheme $S$ over $\mc{O}_{E_{X'}, v'}$ the set of isomorphism classes of data $(\mc{A}, \iota, \theta, \kappa, \alpha)$ where $(\mc{A}, \iota, \theta, \kappa)$ are as in the functor $\mc{F}'_{U'_p(0)U'^p}$ and
    \[\alpha\colon p^{-m}\Lambda_p/\Lambda_p \to \mc{A}[p^m](S)\]
    is an $\mc{O}_{E, p}$-linear homomorphism such that $\set{\alpha(x) : x \in p^{-m}\Lambda/\Lambda}$ is a full set of sections of $\mc{A}[p^m]$ and $\alpha$ maps the pairing $\inner{,}_{p, m}$ to the Weil pairing on $\mc{A}[p^m]$, up to a scalar multiple in $(\Z/p^m\Z)^\times$.

    \begin{thm}[{\cite[Prop. 15]{Man05}}]
        The functor $\mc{F}'_{U'_p(m)U'^p}$ is represented by a smooth scheme $\mc{X}_{U'_p(m)U'^p}$ over $\mc{O}_{E_{X'}, v'}$.
    \end{thm}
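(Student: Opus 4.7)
The plan is to build $\mc{X}'_{U'_p(m)U'^p}$ as a closed subscheme of a relative Hom-scheme over $\mc{M}_{U'} = \mc{X}'_{U'_p(0)U'^p}$, and then use Serre--Tate deformation theory to verify smoothness. Throughout I rely on the fact, already established in the excerpt, that $\mc{M}_{U'}$ is smooth over $\mc{O}_{E_{X'},v'}$ because $p$ is unramified in $E$.

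For representability, I would work over $\mc{M}_{U'}$ with its universal abelian scheme $\mc{A}^{\mathrm{univ}}$ equipped with $(\iota,\theta,\kappa)$. The $p^m$-torsion $\mc{A}^{\mathrm{univ}}[p^m]$ is a finite locally free group scheme carrying an $\mc{O}_{E,p}/p^m$-action from $\iota$ and a Weil pairing induced by $\theta$. The functor sending $S\to\mc{M}_{U'}$ to the set of $\mc{O}_{E,p}$-linear homomorphisms $\alpha\colon p^{-m}\Lambda_p/\Lambda_p \to \mc{A}^{\mathrm{univ}}[p^m](S)$ is represented by an affine scheme of finite type $\mathcal{H}\to\mc{M}_{U'}$. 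The compatibility of $\alpha$ with the pairings up to a scalar in $(\Z/p^m\Z)^\times$ is a closed condition, and the Katz--Mazur requirement that $\alpha$ form a full set of sections is also closed (it is given by the vanishing of the coefficients of $\prod_{x}(T-\alpha(x))$ minus the polynomial trivializing $\mc{A}^{\mathrm{univ}}[p^m]$). Cutting out these conditions yields the desired $\mc{X}'_{U'_p(m)U'^p}$, which represents $\mc{F}'_{U'_p(m)U'^p}$ by construction.

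Second, I would check that the generic fiber reproduces $X'_{U'_p(m)U'^p}$. On $\mc{M}_{U'}[1/p]$ the $p$-divisible group $\mc{A}^{\mathrm{univ}}[p^\infty]$ is étale, so the full-set-of-sections condition collapses to the requirement that $\alpha$ be an $\mc{O}_{E,p}$-linear isomorphism onto $\mc{A}^{\mathrm{univ}}[p^m]$. This presents $\mc{X}'_{U'_p(m)U'^p}[1/p]$ as a finite étale cover of $X'_{U'_p(0)U'^p}$ matching the complex uniformization description.

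For smoothness over $\mc{O}_{E_{X'},v'}$, I would argue locally via Serre--Tate. Deformations of $(\mc{A}^{\mathrm{univ}},\iota,\theta,\alpha)$ correspond to deformations of the $p$-divisible group $\mc{A}^{\mathrm{univ}}[p^\infty]$ together with its $\mc{O}_{E,p}$-action, polarization, and level-$m$ structure. The first three pieces generate a smooth formal deformation ring because, with $p$ unramified, the local model $\mb{M}^{\mathrm{naive}} = \mb{M}^{\mathrm{loc}}$ is smooth. Adding $\alpha$ reduces to showing that the moduli of Drinfeld level-$m$ structures on a finite flat $\mc{O}_{E,p}/p^m$-truncated Barsotti--Tate group of the prescribed Kottwitz type over a smooth base is again smooth. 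The main obstacle is precisely this last assertion: in the classical modular case it is Drinfeld's theorem as developed by Katz--Mazur, and in the PEL unitary setting it requires a careful local analysis at each mod-$p$ point using displays or Dieudonné theory to exhibit explicit smooth parameters for the level-$m$ structures. I would either invoke Mantovan's proof directly, or redo this computation by slicing $\mc{A}^{\mathrm{univ}}[p^\infty]$ into its isoclinic pieces over the completed local rings and applying Drinfeld's theorem componentwise, using unramifiedness to decouple the $\mf{p}\mid p$ factors.
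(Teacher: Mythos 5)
This theorem is cited from Mantovan's paper, so the manuscript itself supplies no proof to compare against. Your outline — representability by a closed subscheme of a Hom-scheme, the generic fiber check, and Serre--Tate reduction — is the standard one, and the first two steps are fine.

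The problem is the smoothness step. You reduce the task to showing that the moduli of Drinfeld level-$p^m$ structures over a smooth base is again smooth, and you invoke ``Drinfeld's theorem as developed by Katz--Mazur'' for this. But that theorem does not assert smoothness: Katz--Mazur show that the modular curve $Y(p^m)$ with Drinfeld level-$p^m$ structure is \emph{regular} over $\Z_p$, not smooth; its special fiber is a union of Igusa curves crossing at the supersingular points. The same phenomenon should occur here: the Kottwitz condition ($\phi(\ell)\conj{\phi(\ell)}$ for $\phi\in\phib$) yields height-$2$, dimension-$1$ factors of $\mc{A}^{\mathrm{univ}}[p^\infty]$ at places above $p$ lying under $\phib$, and these are not forced to be ordinary; at supersingular points the Drinfeld level structure on such a factor produces exactly the Katz--Mazur singularities. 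Your alternate proposal — slicing $\mc{A}^{\mathrm{univ}}[p^\infty]$ into isoclinic pieces and applying Drinfeld componentwise — cannot close the gap, because the slope decomposition exists only pointwise up to isogeny in characteristic $p$; it is not a flat decomposition of the $p$-divisible group over the completed local rings. So the argument as written delivers at best regularity, not smoothness. You should either revisit exactly what Mantovan's proposition asserts (it may well be regularity and flatness rather than smoothness, in which case Katz--Mazur is the right tool but for a weaker conclusion), or pinpoint the special feature of this PEL datum that would force ordinariness of the relevant factors — you have not identified one, and from the stated signature alone there does not appear to be one.
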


    To make the notion of $U'^p$ small enough explicit, fix a lattice $\Lambda \subset V$ over $\Z$ such that $\Lambda \otimes_{\Z} \Z_p \cong \Lambda_p$. For $N \in \N$, let
    \[U'(N) \coloneqq \set{g \in G'(\mb{A}_f) : g|_{\Lambda/N\Lambda} \equiv 1}.\]
    
    We now remove our restriction that the moduli problem above $p$ is unramified.

    \begin{prop}
        If $U'_p(m)U'^p \subset U'(N)$ is a normal subgroup such that $N \ge 3$ and $m = 0$ when $p$ is ramified in $E_{X'}$, then the functor $\mc{F}'_{U'_p(m)U'^p}$ is representable by a normal scheme with reduced special fiber. Moreover, if $p$ is unramified in $E_{X'}$, then the functor $\mc{F}'_{U'_p(m)U'^p}$ is representable by a smooth scheme.
    \end{prop}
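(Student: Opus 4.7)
The plan is to split on whether $p$ is ramified in $E_{X'}$ and build on the integral models already constructed, using the condition $U' \subset U'(N)$ with $N \ge 3$ to secure representability by a scheme. For representability, the standard fact is that a principal level-$N$ structure with $N \ge 3$ rigidifies the moduli problem: any automorphism of a tuple $(\mc{A}, \iota, \theta, \kappa, \alpha)$ acts trivially on $\mc{A}[N]$, and by a Serre-type argument the automorphism of $\mc{A}$ itself is forced to be trivial. Hence the moduli stack has no nontrivial automorphisms and is in fact a scheme.

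In the unramified case we directly invoke Mantovan's theorem cited above. For any $m \ge 0$, it produces a smooth scheme $\mc{X}'_{U'_p(m)U'^p}$ over $\mc{O}_{E_{X'}, v'}$ representing $\mc{F}'_{U'_p(m)U'^p}$, provided $U'^p$ is sufficiently small; the hypothesis $U' \subset U'(N)$ with $N \ge 3$ supplies this smallness. Smoothness immediately gives normality and reducedness of the special fiber.

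In the ramified case we have $m = 0$ by hypothesis, and $\mc{X}'_{U'}$ is defined by pulling back $\mc{M}_{U'}$ along the closed inclusion $[\mb{M}^{\mathrm{loc}}/\mc{P}_{\mc{O}_{E_{X'}, v'}}] \hookrightarrow [\mb{M}^{\mathrm{naive}}/\mc{P}_{\mc{O}_{E_{X'}, v'}}]$. The forgetful map $\mc{M}_{U'} \to [\mb{M}^{\mathrm{naive}}/\mc{P}_{\mc{O}_{E_{X'}, v'}}]$ is smooth of relative dimension $5g$, so the pullback $\mc{X}'_{U'} \to [\mb{M}^{\mathrm{loc}}/\mc{P}_{\mc{O}_{E_{X'}, v'}}]$ is smooth of the same dimension. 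Normality and reducedness of the special fiber are étale-local properties that ascend along smooth morphisms with normal / reduced-fiber base, so they transfer from $\mb{M}^{\mathrm{loc}}$, where they hold by Pappas' theorem, to $\mc{X}'_{U'}$.

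The main obstacle is the bookkeeping: verifying precisely that the smallness of $U'^p$ implicit in the cited Mantovan and Rapoport--Zink results is implied by $U' \subset U'(N)$ for $N \ge 3$, and that the geometric properties descend correctly through the quotient by $\mc{P}_{\mc{O}_{E_{X'}, v'}}$. Both reduce to standard facts once the rigidification argument and the smoothness of the atlas are in place; no new geometric input beyond Pappas' theorem and Mantovan's theorem is required.
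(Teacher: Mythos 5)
Your overall strategy is the right one — the Serre rigidity lemma ($N \ge 3$ forces automorphisms acting trivially on $\mc{A}[N]$ to be trivial) is indeed the engine of the paper's proof, and you identify it in your first paragraph. But your second and fourth paragraphs misstate the logical mechanism, and that muddle is exactly where the proposition's content lives.

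You write that "the hypothesis $U' \subset U'(N)$ with $N \ge 3$ supplies this smallness," as if Mantovan's (or Kottwitz's) representability theorem applies directly to $U'^p$ once $U' \subset U'(N)$. That is not how the proposition works, and one cannot casually identify the two smallness conditions: the representability theorems quoted earlier in the paper take ``$U'^p$ sufficiently small'' as a black box, and the whole point of this proposition is to extend representability to larger $U'^p$ where those theorems do not directly apply. The paper's proof proceeds in two steps: first choose a normal subgroup $U'^p_0 \subset U'^p$ small enough that the functor at level $U'_p(m)U'^p_0$ is represented by a scheme $\mc{X}'_{U'_p(m)U'^p_0}$; second, show that the finite group $U'^p/U'^p_0$ acts \emph{freely} on that scheme, so the quotient is again a scheme and represents the coarser moduli problem. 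The freeness claim is exactly where the Serre lemma enters: a $g \in U'(N)^p$ fixing a point yields an automorphism $f$ of the underlying abelian scheme that acts trivially on $N$-torsion, hence $f = \mathrm{id}$ and $g \in U'^p_0$. Your proposal skips the auxiliary $U'^p_0$ and the free-action-plus-quotient step, framing the discrepancy as mere ``bookkeeping'' rather than as the argument itself.

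Your claim that ``the moduli stack has no nontrivial automorphisms and is in fact a scheme'' is also not quite a complete argument: trivial inertia gives an algebraic space, not a scheme, without further input. The quotient-by-free-finite-group-action route sidesteps this since a free finite quotient of a quasi-projective scheme is a scheme. Your treatment of the ramified $m = 0$ case via the smooth local-model diagram, and the deduction of normality and reduced special fiber from $\mb{M}^{\mathrm{loc}}$, is fine and matches the earlier discussion in the paper; but the geometric properties also descend through the free finite quotient, and that is the route the paper implicitly takes once representability at level $U'^p$ is secured.
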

    \begin{proof}
        Choose a normal subgroup $U'^p_0 \subset U'^p$ sufficiently small so that the functor $\mc{F}'_{U'_p(m)U'^p_0}$ is represented by the scheme $\mc{X}'_{U'_p(m)U'^p_0}$. There is an action of $U'^p \subset U'(N)^p$ on this scheme and it suffices to show that $U'(N)^p$ acts freely on $\mc{X}'_{U'_p(m)U'^p_0}$. A point $x \in \mc{X}'_{U'_p(m)U'^p_0}$ corresponds to a quintuple $(\mc{A}, \iota, \theta, \kappa, \alpha)$, where the full set of sections $\alpha$ is trivial when $m = 0$. Suppose that $g \in U'(N)^p$ fixes $x$. We may choose our $\mc{A}$ and $\kappa$ so that $H_1(\mc{A}, \mb{A}_f^p) \cong \Lambda_{\mb{A}_f^p}$ and $\kappa$ induces an isomorphism between the two. The element $g$ acts by sending $(\mc{A}, \iota, \theta, \kappa, \alpha)g = (\mc{A}, \iota, \theta, g^{-1} \circ \kappa, \alpha)$. Thus, there exist some isomorphism $f$ of $\mc{A}$ and element $g' \in U'^p_0$ such that $(gg')^{-1} \circ \kappa = \kappa \circ f_*\colon H_1(\mc{A}, \mb{A}_f^p) \to \Lambda_{\mb{A}_f^p}$. Now since $gg' \in U'(N)^p$ acts on the identity on $\Lambda/N\Lambda$, we get that $f_*$ must act as the identity on $\mc{A}[N]$, meaning that $f$ is the identity since $N \ge 3$. Thus $gg' = 1$ and $g^{-1}\circ \kappa$ is in the same $U'^p_0$ orbit as $\kappa$.
    \end{proof}

	We let $\mc{X}'_{U'}$ be the scheme representing $\mc{F}'_{U'}$ whenever $U' = \prod_p U'_p \subset U'(N)$ for $N \ge 3$. This gives us a flat and normal integral model for $X'_{U'}$ over $\mc{O}_{E_{X'}}$.
 
    \subsection{Transferring Integral Models}
    
    Now we can use the integral model for $X'$ to get integral models for $X$ and $X''$, as done in \cite{Kis10, Kis18}, by extending the adjoint group $G^{\ad}$-action on the neutral component of $\mc{X}'$. We briefly recall how this is done because we will use the same idea to transfer the $p$-divisible group on $\mc{X}'$ to $p$-divisible groups over $\mc{X}$ and $\mc{X}''$. Set $U_p(m) \coloneqq (1 + p^m\mc{O}_{B, p})^\times$ and $U''_p \coloneqq U''_p(m) \coloneqq (1 + p(\mc{O}_{B, p}\times \mc{O}_{E, p}))^\times \subset G''(\Q_p)$. Then $U_p \coloneqq U_p(0)$ and $U''_p \coloneqq U''_p(0)$ are the $\Z_p$-points of parahoric subgroups over $\Z_{(p)}$ which fix the lattices $\mc{O}_{B, p}$ and $\mc{O}_{B, p} \times \mc{O}_{E, p}$ respectively, and over the generic fiber they are isomorphic to $G$ and $G''$. Denote these models $G_p, G''_p$ so that $G_p(\Z_p) = U_p$ and $G''_p(\Z_p) = U''_p$. For $S = X, X', X''$, let $U_{S, p} = U_p, U'_p, U''_p$ and $U_{S}^p = U^p, U'^p, U''^p$ and $G_S = G, G', G''$ and $G_{S, p} = G_p, G'_p, G''_p$ respectively. Let $Z_S$ be the center of $G_S$. For each choice of $S$, take the limit over all choices of $U_{S}^p$ to get
    \[S_{U_{S, p}} = \varprojlim_{U_{S}^p} S_{U_{S, p}U_{S}^p} = G_S(\Q) \backslash (\mc{H}^{\pm})^\Sigma \times G_S(\mb{A}_f)/U_{S, p},\]
    and let the entire projective limit be
    \[S = \varprojlim_{U_{S, p}U_S^p} S_{U_{S, p}U_S^p} = G_S(\Q) \backslash (\mc{H}^\pm)^\Sigma \times G_S(\mb{A}_f).\]

    We recall the star product notation of \cite{Kis10, Kis18}. Suppose that a group $\Delta$ acts on a group $H$ and suppose that $\Gamma \subset H$ is stable under the action of $\Delta$. Let $\Delta$ act on itself by left conjugation, and suppose there is a group homomorphism $\phi\colon \Gamma \to \Delta$ that respects $\Delta$-action. We also impose for all $\gamma \in \Gamma$ that the $\phi(\gamma)$-action on $H$ is by left conjugation by $\gamma$. Then, the subgroup $\{(\gamma, \phi(\gamma)^{-1}) : \gamma \in \Gamma\}$ is a normal subgroup of $H \rtimes \Delta$ and we let $H *_\Gamma \Delta$ denote the quotient.
    
    Let $G_S^{\ad}(\R)^{+}$ be the neutral component and let $G_S(\R)_+$ be the preimage of $G_S^{\ad}(\R)^{+}$ under the map $G_S(\R) \to G_S^{\ad}(\R)$. Let $G_S(\Q)_+ = G_S(\R)_+ \cap G_S(\Q)$ and let $G_{S, p}(\Z_{(p)})_+ = G_{S, p}(\Z_{(p)}) \cap G_S(\Q)_+$. Let $G_{S, p}^{\ad}(\Z_{(p)})^+ = G_{S, p}^{\ad}(\Z_{(p)}) \cap G_S^{\ad}(\R)^+$. There is a natural right action of $G_S(\mb{A}_f)$ on $S$ by right multiplication on the $G_S(\mb{A}_f)$ factor, under which $Z_S(\Q)$ acts trivially. There is also a right action of $G^{\ad}_S(\Q)^+$ on $S$ where $\gamma \in G_S^{\ad}(\Q)^+$ acts on a representative $[x, g]$ by $[x, g]\gamma = [\gamma^{-1}x, \gamma^{-1}g\gamma]$. Let $Z_S(\Q)^-$ be the closure of $Z_S(\Q)$ in $G_S(\mb{A}_f)$. We thus get an action of
    \[G_S(\mb{A}_f)/Z_S(\Q)^- \rtimes G_S^{\ad}(\Q)^+\]
    on $S$. The subgroup $G_S(\Q)_+/Z_S(\Q)$ embeds into to both $G_S(\mb{A}_f)/Z_S(\Q)^-$ and $G^{\ad}_S(\Q)^+$ and its action on $S$ is the same through both embeddings. Thus, we get an action of
    \[\ms{A}(G_S) \coloneqq G_S(\mb{A}_f)/Z_S(Q)^-*_{G_S(\Q)_+/Z_S(\Q)} G_S^{\ad}(\Q)^+\]
    on $X$. We also define a subgroup
    \[\ms{A}(G_S)^\circ \coloneqq G_S(\Q)_+^-/Z_S(\Q)^- *_{G_S(\Q)_+/Z_S(\Q)} G_S^{\ad}(\Q)^+,\]
    where $G_S(\Q)_+^-$ is the closure of $G_S(\Q)_+$ in $G_S(\mb{A}_f)$.

    After taking the quotient of $S$ by $U_{S, p} = G_{S, p}(\Z_p)$, we get a right action of
    \[\ms{A}(G_{S, p}) = G_S(\mb{A}_f^p)/Z_{S, p}(\Z_{(p)})^- *_{G_{S, p}(\Z_{(p)})_+/Z_{S, p}(\Z_{(p)})} G_{S, p}^{\ad}(\Z_{(p)})^+\]
    on $S_{U_{S, p}}$, where $Z_{S, p}(\Z_{(p)})^-$ is the closure inside $G_S(\mb{A}_f^p)$. We also define the subgroup
    \[\ms{A}(G_{S, p})^\circ = G_{S, p}(\Z_{(p)})_+^-/Z_{S, p}(\Z_{(p)})^- *_{G_{S, p}(\Z_{(p)})_+/Z_{S, p}(\Z_{(p)})} G_{S, p}^{\ad}(\Z_{(p)})^+.\]
    
    Fix a geometrically connected component $S^+ \subset S$ as the image of the product of upper half planes $(\mc{H}^+)^\Sigma$ in the complex uniformization of $S$. Then take
    \[S^+ = \varprojlim_{U_{S, p}U_S^p} S_{U_{S, p}U_S^p}^+ = G_S^{\der}(\Q)\backslash (\mc{H}^+)^\Sigma \times G_S^{\der}(\mb{A}_f),\]
    and
    \[S_{U_{S, p}}^+ = \varprojlim_{U_S^p} S_{U_{S, p}U_{S}^p}^+ = G_{S, p}^{\der}(\Z_{(p)})^- \backslash (\mc{H}^+)^\Sigma \times G_S^{\der}(\mb{A}_f^p).\]
    
    Let $E_S$ be the reflex field of $S$ and let $E_S^p \subset \clos{E_S}$ be the maximal extension of $E$ that is unramified over all primes dividing $p$. The connected component $S^+$ is defined over $\clos{E_S}$ and  $S_{U_{S, p}}^+$ is defined over $E_S^p$. Let
    \[\ms{E}(G_S) \subset \ms{A}(G_S) \times \Gal(\clos{E_S}/E_S)\]
    be the stabilizer of $S^+$ and let
    \[\ms{E}(G_{S, p}) \subset \ms{A}(G_{S, p}) \times \Gal(E_S^p/E_S)\]
    be the stabilizer of $S_{U_{S, p}}^+$. Then, we have the following.

    \begin{prop}[{\cite[Lem. 3.3.7]{Kis10}}]
        The stabilizer $\ms{E}(G_S)$ (resp. $\ms{E}(G_{S, p})$) depends only on $G_S^{\der}$ (resp. $G_{S, p}^{\der}$) and $X^{\ad}$, and it is an extension of $\Gal(\clos{E_S}/E_S)$ (resp. $\Gal(E_S^p/E_S)$) by $\ms{A}(G_S)^\circ$ (resp. $\ms{A}(G_{S, p})^\circ$). Moreover, there are canonical isomorphisms
        \[\ms{A}(G_{S}) *_{\ms{A}(G_{S})^\circ} \ms{E}(G_{S}) \cong \ms{A}(G_{S}) \times \Gal(\clos{E_S}/E_S)\]
        and
        \[\ms{A}(G_{S, p}) *_{\ms{A}(G_{S, p})^\circ} \ms{E}(G_{S, p}) \cong \ms{A}(G_{S, p}) \times \Gal(E_S^p/E_S).\]
    \end{prop}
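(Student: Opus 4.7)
The plan is to reduce the statement to two well-known inputs about Shimura varieties: Deligne's description of the set of geometric connected components and his reciprocity law for the Galois action on them. I would then assemble these inputs into the extension and amalgamated-product structure being asserted.

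\textbf{Step 1: compute the stabilizer in $\ms{A}(G_S)$.} First I would show that the stabilizer of $S^+$ inside $\ms{A}(G_S)$ equals $\ms{A}(G_S)^\circ$. On the $G_S(\mb{A}_f)/Z_S(\Q)^-$ factor, Deligne's description of $\pi_0(S_\C)$ identifies the stabilizer of $S^+$ as the image of $G_S(\Q)_+^-$, since at each finite level the connected components are $G_S(\Q)_+\backslash G_S(\mb{A}_f)/U$ and the component $S^+$ is the image of $(\mc{H}^+)^\Sigma$. On the $G_S^{\ad}(\Q)^+$ factor, every element acts through $G_S^{\ad}(\R)^+$, which by definition preserves the connected component $(\mc{H}^+)^\Sigma$, so the whole of $G_S^{\ad}(\Q)^+$ stabilizes $S^+$. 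Combining these two observations with the amalgamation along $G_S(\Q)_+/Z_S(\Q)$ yields that the stabilizer is precisely $\ms{A}(G_S)^\circ$ as defined. The same argument runs verbatim for $\ms{A}(G_{S,p})^\circ$, replacing $G_S(\mb{A}_f)$ by $G_S(\mb{A}_f^p)$ and closures by closures taken there.

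\textbf{Step 2: the Galois extension.} I would invoke Deligne's reciprocity law, which provides a canonical homomorphism from $\Gal(\clos{E_S}/E_S)^{\mathrm{ab}}$ into $\pi_0(S_{\clos{E_S}})$ compatible with the $G_S(\mb{A}_f)$-action via the reflex norm. Concretely, for every $\sigma \in \Gal(\clos{E_S}/E_S)$ the induced permutation of $\pi_0$ agrees with the action of some element $a_\sigma \in G_S(\mb{A}_f)/Z_S(\Q)^-$. Therefore the pair $(a_\sigma^{-1}, \sigma) \in \ms{A}(G_S)\times \Gal(\clos{E_S}/E_S)$ stabilizes $S^+$, so the projection $\ms{E}(G_S)\to \Gal(\clos{E_S}/E_S)$ is surjective. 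Its kernel is exactly the stabilizer of $S^+$ inside $\ms{A}(G_S)\times\{1\}$, which by Step 1 is $\ms{A}(G_S)^\circ$. This gives the short exact sequence
\[
1 \longrightarrow \ms{A}(G_S)^\circ \longrightarrow \ms{E}(G_S) \longrightarrow \Gal(\clos{E_S}/E_S) \longrightarrow 1,
\]
and analogously for $\ms{E}(G_{S,p})$ using the unramified version of reciprocity.

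\textbf{Step 3: the canonical amalgamated-product isomorphism.} The inclusions $\ms{A}(G_S)\hookrightarrow \ms{A}(G_S)\times\Gal(\clos{E_S}/E_S)$ via $a\mapsto (a,1)$ and $\ms{E}(G_S)\hookrightarrow\ms{A}(G_S)\times\Gal(\clos{E_S}/E_S)$ as a subgroup by definition agree on the common subgroup $\ms{A}(G_S)^\circ$, so they induce a homomorphism from the amalgam $\ms{A}(G_S)*_{\ms{A}(G_S)^\circ}\ms{E}(G_S)$ to the product. Surjectivity follows because any $(a,\sigma)$ can be written as $(a\, a_\sigma, 1)\cdot(a_\sigma^{-1},\sigma)$ with $(a_\sigma^{-1},\sigma)\in \ms{E}(G_S)$ by Step 2. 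Injectivity follows from the fact that the intersection $\ms{A}(G_S)\cap \ms{E}(G_S)$ inside the product is exactly $\ms{A}(G_S)^\circ$, which is precisely the amalgamation relation. The same argument gives the second isomorphism.

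\textbf{Step 4: dependence only on $G_S^{\der}$ and $X^{\ad}$.} For this I would observe that $S^+$ admits the presentation $G_S^{\der}(\Q)\backslash (\mc{H}^+)^\Sigma\times G_S^{\der}(\mb{A}_f)$ recorded earlier, which involves only $G_S^{\der}$ and $(\mc{H}^+)^\Sigma$ (and the latter is intrinsic to $X^{\ad}$). Both the $\ms{A}(G_S)^\circ$-action and the Galois action restrict to actions on $S^+$ through its automorphism group as a pro-étale cover of the connected Shimura variety attached to $(G_S^{\der},X^{+})$, which only sees the adjoint datum. Hence the abstract group $\ms{E}(G_S)$, identified through its faithful action on $S^+$, depends only on $G_S^{\der}$ and $X^{\ad}$, and likewise in the $p$-integral setting.

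The main obstacle I anticipate is the bookkeeping around Deligne's reciprocity law in Step 2: matching the Galois action on $\pi_0$ precisely with multiplication by elements of $G_S(\mb{A}_f)/Z_S(\Q)^-$ (respectively, its prime-to-$p$ analogue, which requires checking that the reciprocity element can be chosen unramified above $p$ when we pass to $E_S^p$). Once that matching is set up, Steps 1, 3, and 4 are essentially formal manipulations of the amalgamated-product definitions.
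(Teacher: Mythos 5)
The paper does not prove this proposition; it is quoted directly from \cite[Lem.~3.3.7]{Kis10} (which in turn rests on Deligne's analysis of connected components of Shimura varieties). Your sketch correctly reproduces the structure of that argument: identify the stabilizer of $S^+$ in the purely geometric part $\ms{A}(G_S)$ with $\ms{A}(G_S)^\circ$, use reciprocity for the Galois action on $\pi_0$ to show the projection to the Galois group is surjective, and assemble the amalgam isomorphism from the resulting short exact sequence.

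Two points deserve more care than the sketch gives them. First, in Step~3 you treat $*_\Gamma$ like a free amalgamated product, but the notation recalled in the paper defines $H *_\Gamma \Delta$ as a quotient of the semidirect product $H\rtimes\Delta$ by the antidiagonal copy of $\Gamma$. Your universal-property language still works because the required compatibility (the $\Delta$-action on $H$ being intertwined by $f_\Delta$-conjugation) does hold here, but the injectivity argument is really the observation that $\ker\bigl(\ms{E}(G_S)\to\Gal(\clos{E_S}/E_S)\bigr) = \ms{A}(G_S)^\circ$, which forces any element of the kernel of the amalgam map to land in the antidiagonal relation, rather than an appeal to an amalgamated intersection. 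Second, Step~4 is the deepest input and is stated rather than argued: that the Galois action on the connected component $S^+$ (equivalently on $\pi_0$) is intrinsic to $(G_S^{\der}, X^{\ad})$ is a substantive theorem of Deligne and is precisely what makes the transfer of integral models between Shimura data with a common derived group possible; it is not merely that $S^+$ has a presentation in terms of $G_S^{\der}$. You do flag reciprocity and its unramified version as the main obstacle, which is accurate, but the uniqueness-of-Galois-action input in Step~4 deserves the same caveat.
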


    There is a right action of $\ms{E}(G_S)$ on $\ms{A}(G_S) \times S^+$ given by right conjugation via the map $\ms{E}(G_S) \to \ms{A}(G_S) \times \Gal(\clos{E_S}/E_S) \to \ms{A}(G_S)$ on the first factor and right multiplication on the second factor. There is also an action of $\ms{A}(G_S)$ on $\ms{A}(G_S) \times S^+$ defined by right multiplication on the first factor and ignoring the second factor. Thus, there is an action of $\ms{A}(G_S) *_{\ms{A}(G_S)^\circ} \ms{E}(G_S) \cong \ms{A}(G_S) \times \Gal(\clos{E_S}/E_S)$ on $[\ms{A}(G_S) \times S^+]/\ms{A}(G_S)^\circ$. Similarly, we can define an action of $\ms{A}(G_{S, p}) *_{\ms{A}(G_{S, p})^\circ} \ms{E}(G_{S, p}) \cong \ms{A}(G_{S, p}) \times \Gal(E_S^p/E_S)$ on $[\ms{A}(G_{S, p}) \times S_{U_{S, p}}^+]/\ms{A}(G_{S, p})^\circ$.

    \begin{prop}[{\cite[Prop 3.3.10]{Kis10}}]
        For $S, S' \in \{X, X', X''\}$, there is an isomorphism of $E_S^p$ schemes
        \[S'_{U_{S', p}} \cong [\ms{A}(G_{S', p}) \times S_{U_{S, p}}^+]/\ms{A}(G_{S, p})^\circ\]
        that respects $\Gal(E_{S'}^p/E_{S'})$ action, where the Galois group acts on the right via the isomorphism $\ms{A}(G_{S', p}) *_{\ms{A}(G_{S', p})^\circ} \ms{E}(G_{S', p}) \cong \ms{A}(G_{S', p}) \times \Gal(E_{S'}^p/E_{S'})$.
    \end{prop}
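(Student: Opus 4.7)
The plan is to reconstruct the full Shimura variety $S'_{U_{S', p}}$ from a single geometric connected component via the action of an adelic group, exploiting that all three Shimura data $X, X', X''$ share the same derived group $G^{\der} = \Res_{F/\Q} B^{\times, 1}$ and the same adjoint group. The key preliminary observation is that the neutral component
\[S^+_{U_{S, p}} = G_{S, p}^{\der}(\Z_{(p)})^- \backslash (\mc{H}^+)^\Sigma \times G_S^{\der}(\mb{A}_f^p)\]
depends only on the common derived group, yielding a canonical identification $S^+_{U_{S, p}} \cong {S'}^+_{U_{S', p}}$ and hence an inclusion $\iota\colon S^+_{U_{S, p}} \hookrightarrow S'_{U_{S', p}}$.

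Using the right action of $\ms{A}(G_{S', p})$ on $S'_{U_{S', p}}$, I would form the orbit map
\[\mu\colon \ms{A}(G_{S', p}) \times S^+_{U_{S, p}} \longrightarrow S'_{U_{S', p}}, \qquad (a, x) \mapsto \iota(x)\cdot a.\]
To factor this through the $\ms{A}(G_{S, p})^\circ$-quotient, I would identify $\ms{A}(G_{S, p})^\circ$ with a subgroup of $\ms{A}(G_{S', p})$: the factor $G_{S, p}^{\ad}(\Z_{(p)})^+$ is intrinsic to the common adjoint datum, and the factor $G_{S, p}(\Z_{(p)})_+^-/Z_{S, p}(\Z_{(p)})^-$ depends only on $G^{\der}$ modulo centers. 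Compatibility of the two actions via the common adjoint group then gives $\mu(\gamma^{-1} a, \gamma x) = \mu(a, x)$ for all $\gamma \in \ms{A}(G_{S, p})^\circ$, producing the desired
\[\bar{\mu}\colon [\ms{A}(G_{S', p}) \times S^+_{U_{S, p}}]/\ms{A}(G_{S, p})^\circ \longrightarrow S'_{U_{S', p}}.\]

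To see $\bar{\mu}$ is a scheme-theoretic isomorphism, I would first check surjectivity on geometrically connected components: strong approximation applied to the simply connected semisimple group $G_{S'}^{\der}$ implies that $\ms{A}(G_{S', p})$ acts transitively on the set of such components, so every component meets the image of $\iota$. For injectivity, if $\iota(x)\cdot a = \iota(x')\cdot a'$, then $a'a^{-1}$ stabilizes both a point of $S^+_{U_{S, p}}$ and hence the whole neutral component, forcing it into $\ms{A}(G_{S, p})^\circ$ by the very construction of the amalgamated product. Restricted to a single connected component, both sides are then quotients of $(\mc{H}^+)^\Sigma \times G^{\der}(\mb{A}_f^p)$ by the same arithmetic group, giving a scheme-theoretic isomorphism over $\clos{E_{S'}}$.

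Finally, for Galois equivariance, I would exploit the identification $\ms{A}(G_{S', p}) *_{\ms{A}(G_{S', p})^\circ} \ms{E}(G_{S', p}) \cong \ms{A}(G_{S', p}) \times \Gal(E_{S'}^p/E_{S'})$ from the preceding proposition: since $\ms{E}(G_{S', p})$ stabilizes ${S'}^+_{U_{S', p}}$ by definition, its action on the neutral component, combined with the $\ms{A}(G_{S', p})$-action on the first factor, assembles into a Galois action on the quotient that matches the canonical one on $S'_{U_{S', p}}$. The main obstacle is precisely this last step: the neutral component is only defined over the maximal $p$-unramified extension $E_{S'}^p$ rather than over $E_{S'}$ itself, so verifying that $\bar\mu$ descends in a Galois-equivariant fashion forces one to track the descent data carefully through the extension $\ms{E}(G_{S', p})$ rather than merely checking equivariance on geometric points.
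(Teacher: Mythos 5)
The paper does not prove this proposition; it is cited verbatim from \cite[Prop.\ 3.3.10]{Kis10}, so there is no internal argument to compare against. Your sketch reconstructs the Deligne--Kisin strategy correctly in outline: the neutral component depends only on the common derived group, the orbit map built from the right $\ms{A}(G_{S',p})$-action is the natural candidate, and the Galois action must be routed through $\ms{E}(G_{S',p})$. Two steps need more care. Your injectivity argument asserts that $\ms{A}(G_{S,p})^\circ$ is the stabilizer of the neutral component inside $\ms{A}(G_{S',p})$ ``by the very construction of the amalgamated product,'' but the paper (following Kisin) \emph{defines} $\ms{A}(G_{S,p})^\circ$ by the explicit star-product formula; the agreement of this group with the component stabilizer is a separate lemma (the $\Z_{(p)}$-analogue of Deligne's computation of the stabilizer in the real-approximation argument), not a formal consequence of the definition. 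Second, you correctly single out Galois equivariance of the descent as the main obstacle but leave it unresolved; that is exactly where the substance of the cited result lies, since the neutral component is only an $E_{S'}^p$-scheme and one must exhibit the twisted $\Gal(E_{S'}^p/E_{S'})$-action through $\ms{E}(G_{S',p})$ as a descent datum that glues $\bar\mu$ into a morphism over $E_{S'}$, rather than merely checking compatibility of actions on geometric points. Neither gap is a flaw in the paper itself, which uses the proposition as a black box; but your reconstruction is incomplete precisely at the two points where Kisin's argument is nontrivial.
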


    Here, we have that $\ms{A}(G_{S, p})^\circ$ acts on $\ms{A}(G_{S', p})$ via $\ms{A}(G_{S, p})^\circ \cong \ms{A}(G_{S', p})^\circ \to \ms{A}(G_{S', p})$. If there is an integral model $\mc{S}_{U_{S, p}}$ for $S$ such that the action of $G^{\ad}$ extends to it, then this isomorphism gives us a way to transfer it to all other $S'$ by simply defining
    \[\mc{S'}_{U_{S', p}} \coloneqq [\ms{A}(G_{S', p}) \times \mc{S}_{U_{S, p}}^+]/\ms{A}(G_{S, p})^\circ\]
    as $\mc{O}_{E_{S'}^p}$-schemes, and then using Galois descent to descend an $\mc{O}_{E_{S'}}$-scheme.
    
    \begin{thm}
        Let $v \mid p$ be a prime of $E_X$ and $v'' \mid p$ be a prime of $E_{X''}$. If $U^p \subset G(\mb{A}_f^p)$ and $U''^p\subset G''(\mb{A}_f^p)$ are such that $U_pU^p \subset U(N)$ and $U''_pU''^p \subset U''(N)$ for some $N \ge 3$, then there is a projective system of integral models $\mc{X}_{U_pU^p}$  (resp. $\mc{X}''_{U''_pU''^p}$) of $X_{U_pU^p}$ (resp. $X''_{U''_pU''^p}$) over $\mc{O}_{E_X, v}$ (resp. $\mc{O}_{E_{X''}, v''}$) such that:
        \begin{enumerate}
	        \item If $p$ is not ramified in $F$ nor $B$, the scheme $\mc{X}_{U}$ (resp. $\mc{X}''_{U''}$) is smooth over $\mc{O}_{E_{X}, v}$ (resp. $\mc{O}_{E_{X''}, v''}$);
            \item The schemes $\mc{X}_U$ and $\mc{X}''_{U''}$ are normal, flat, and their non-smooth locus has codimension at least $2$;
	        \item The $p$-adic completion of $\mc{X}_{U}$ (resp. $\mc{X}''_{U''}$) has a $p$-adic uniformization by a Rapoport--Zink space.
	    \end{enumerate}
    \end{thm}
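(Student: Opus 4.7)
The plan is to take the integral model $\mc{X}'_{U'}$ over $\mc{O}_{E_{X'}}$ already constructed as a PEL moduli problem, and transfer it to integral models for $X_U$ and $X''_{U''}$ using the construction recalled in the previous subsection. Concretely, for $S = X'$ and $S' \in \{X, X''\}$, one applies the isomorphism
\[S'_{U_{S', p}} \cong [\ms{A}(G_{S', p}) \times S^+_{U_{S, p}}]/\ms{A}(G_{S, p})^\circ\]
at the level of integral models. First I would verify that the right $G^{\ad}$-action on the generic fiber $X'_{U'}$ extends to $\mc{X}'_{U'}$: since $\mc{X}'_{U'}$ is defined by a PEL moduli problem in terms of prime-to-$p$ quasi-isogeny classes, the Hecke action at primes away from $p$ extends, and the action of $G^{\ad}(\Q)^+$ on neutral geometric connected components extends because $\mc{X}'_{U'}$ is normal and the action is generically defined. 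Pulling back to the geometrically connected component $\mc{X}'^+_{U'_p}$ over a suitable extension of $\mc{O}_{E_{X'}^p}$, one then forms the quotient
\[\mc{X}^+_{U_p} \rtimes \ms{A}(G_p)\quad \text{and} \quad \mc{X}''^+_{U''_p} \rtimes \ms{A}(G''_p)\]
by transporting via the identification of the neutral components of $X$ and $X'$, and descends to $\mc{O}_{E_X}$ (resp.\ $\mc{O}_{E_{X''}}$) via the stabilizer isomorphism $\ms{A}(G_p) *_{\ms{A}(G_p)^\circ} \ms{E}(G_p) \cong \ms{A}(G_p) \times \Gal(E_X^p/E_X)$ and Galois descent.

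For the three listed properties, the strategy is to inherit them from $\mc{X}'_{U'}$. Normality and flatness are preserved because the transfer is locally an étale quotient by a profinite group acting freely on a sufficiently deep level cover (after passing to $U^p, U''^p$ sufficiently small inside $U(N), U''(N)$ with $N \ge 3$), and $\mc{X}'_{U'}$ is normal and flat by the earlier construction from $\mb{M}^{\mathrm{loc}}$. For (1), if $p$ is unramified in $F$ and $B$ then $p$ is unramified in the PEL datum: $\mc{O}_{B, p}$ is a maximal order in a product of matrix algebras and the moduli is smooth by the Kottwitz result, so the transferred models are smooth. For (2), $\mb{M}^{\mathrm{loc}}$ is normal and Cohen--Macaulay with reduced special fiber by \cite[Thm.~9.1]{Pap13}; since $\mc{X}'_{U'} \to [\mb{M}^{\mathrm{loc}}/\mc{P}]$ is smooth of positive relative dimension, these properties descend to $\mc{X}'_{U'}$, and the non-smooth locus has codimension at least $2$ because the singular locus of a local model of Pappas--Rapoport type at a ramified PEL prime has codimension at least $2$ in the special fiber. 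For (3), the $p$-adic uniformization of $\mc{X}'_{U'}$ by a Rapoport--Zink space transfers under the construction above because the formal completion along the basic locus is compatible with the quotient by $\ms{A}(G'_p)^\circ$.

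The main obstacle I expect is item (2) — showing that the non-smooth locus has codimension at least $2$ in $\mc{X}_U$ and $\mc{X}''_{U''}$. This reduces to the corresponding statement for $\mb{M}^{\mathrm{loc}}$, where one must analyze the local model stratification at each ramified prime $\mf{p}$ of $F$. When $B$ is ramified at $\mf{p}$ (hence $E$ is ramified at $\mf{p}$ by assumption (3) of the setup), the local model is a splitting model in the sense of Pappas--Rapoport, whose singular locus has been computed and shown to have codimension at least $2$. When only $E$ is ramified above $\mf{p}$ and $B$ splits, the local model is a product of Grassmannians and a naive local model for the unitary similitude group with a ramified imaginary quadratic factor, and again the singular locus has codimension at least $2$. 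The remaining steps (existence of an extended Hecke action, Galois descent, transfer of $p$-adic uniformization) are formal once one sets up the star-product framework as in \cite{Kis10, Kis18}.
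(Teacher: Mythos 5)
Your proposal follows the paper's overall strategy — transfer the PEL integral model $\mc{X}'_{U'}$ to $X$ and $X''$ via the star-product / connected-components construction of Kisin and Kisin--Pappas — and your treatment of properties (1), (2), (3) by reduction to $\mc{X}'_{U'}$ and $\mb{M}^{\mathrm{loc}}$ matches the paper.

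The one genuine gap is in your justification for why the $\ms{A}(G'_p)$-action (in particular the $G'^{\ad}(\Q)^+$-part of it) extends from the generic fiber to the integral model $\mc{X}'_{U'_p}$. You assert that ``the action of $G^{\ad}(\Q)^+$ on neutral geometric connected components extends because $\mc{X}'_{U'}$ is normal and the action is generically defined.'' This is not sufficient: a generically defined automorphism of a normal (and not even proper) scheme need not extend, and the whole difficulty in Kisin's framework is that the adjoint action is \emph{not} given by a moduli-theoretic operation at $p$, unlike the prime-to-$p$ Hecke action. The paper handles this by invoking the \emph{extension property} of \cite[Lem.~3.4.5]{Kis10} in the case where $E_{X',v'}/\Q_p$ is unramified (so $G'_p$ is hyperspecial), and by \cite[Cor.~4.6.15]{Kis18} in the ramified parahoric case where the extension property fails. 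You should split into these two cases and cite those results rather than relying on normality; otherwise the extension of the action — and hence the very construction of $\mc{X}_{U_p}$ and $\mc{X}''_{U''_p}$ by the quotient $[\ms{A}(G_p)\times\mc{X}'^+_{U'_p}]/\ms{A}(G'_p)^\circ$ — is unjustified.

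The rest of your argument is sound. Your discussion of the codimension-$2$ bound on the non-smooth locus gives more local-model detail than the paper, which simply deduces (2) and (3) from the fact that the local rings of $\mc{X}$ and $\mc{X}''$ are \'etale-locally isomorphic to those of $\mc{X}'$ and $\mb{M}^{\mathrm{loc}}$; either route works, but the \'etale-local-isomorphism statement is the cleaner reduction.
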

    \begin{proof}
        When $E_{X', v'}$ is unramified over $\Q_p$, then the group $G'$ has a hyperspecial local model over $\mc{O}_{E_{X'}, v'}$. By \cite[Lem. 3.4.5]{Kis10}, the extension property implies that the action of $\ms{A}(G')$ extends to the integral model $\mc{X}'_{U'_p}$. Let $\mc{X'}_{U'}^+$ be the closure of ${X'}_{U'}^+$ in $\mc{X}'_{U'}$ and let
        \[\mc{X'}_{U'_p}^+ \coloneqq \varprojlim_{U'^p} \mc{X'}_{U'_p{U'}^p}^+.\]
        We then define
        \[\mc{X}_{U_p} \coloneqq [\ms{A}(G_p) \times \mc{X'}_{U'_{p}}^+]/\ms{A}(G'_p)^\circ\]
        and
        \[\mc{X''}_{U''_p} \coloneqq [\ms{A}(G''_p) \times \mc{X'}_{U'_{p}}^+]/\ms{A}(G'_p)^\circ.\]
        The action of $\ms{E}(G'_p)$ on ${X'}_{U'_p}^+$ extends to $\mc{X'}_{U'_p}^+$ and hence we get an action of
        \[\ms{A}(G_p) *_{\ms{A}(G'_p)^\circ} \ms{E}(G'_p) \cong \ms{A}(G_p) \times \Gal(E^p_X/E_X)\]
        on $\mc{X}_{U_p}$ (resp. $\ms{A}(G''_p) \times \Gal(E^p_{X''}/E_{X''})$ on $\mc{X}''_{U_p''}$). Thus, we can use the Galois action to descend this scheme to an integral model for $X_{U_p}$ (resp. $X''_{U''_p}$) defined over $\mc{O}_{E_X, v}$ (resp. $\mc{O}_{E_{X''}, v''}$). When $p$ is not ramified in $F$ nor $B$, then the integral model $\mc{X}'_{U'_p}$ corresponds to unramified PEL datum and is smooth.
        
        If $p$ is ramified in $E_{X'}$, then $G'_p$ is no longer hyperspecial and there is no extension property. However, the action of $\ms{E}(G'_p)$ still extends to $\mc{X'}_{U'_p}^+$ by \cite[Cor. 4.6.15]{Kis18}.
        
        The local rings of $\mc{X}$ and $\mc{X}''$ are \'etale locally isomorphic to the local rings of $\mc{X}'$ and $\mb{M}^{\mathrm{loc}}$. Thus, the second and third statement follow from the corresponding statements holding for $\mc{X}'_{U'_p}$ and $\mb{M}^{\mathrm{loc}}$.
    \end{proof}

    By gluing these models together, we have an integral model $\mc{X}_{U}$ over $\Spec \mc{O}_{E_X}$ for whenever $U = \prod_p U_p \subset U(N)$ for $N \ge 3$ and $U_p = U_p(0)$ is maximal whenever $p$ is ramified in $E_{X}$. We now extend the integral model for $\mc{X}_U$ when $U = \prod_p U_p$ is maximal at all primes. Take a prime $p$ that is not ramified in $E_X$ such that $U(p) = U_p(1)U^p$ is maximal at all primes away from $p$. Then define
    \[\mc{X}_{U} \coloneqq \mc{X}_{U(p)}/(U/U(p))\]
    as the quotient stack. Since the $\mc{X}_{U_pU^p}$ form a projective system, the definition of $\mc{X}_U$ does not depend on the choice of prime $p$. We note that $\mc{X}_U$ is a Deligne-Mumford stack and so it has an \'etale cover by schemes. So, all of our proofs can and will be reduced to the case of schemes via \'etale descent.
    
    \section{\texorpdfstring{$p$}{p}-Divisible Groups}\label{sec:p divisible group}
    
    When $U' = U'_pU'^p \subset U'(N)$, the functor $\mc{F}'_{U'}$ is represented by $\mc{X}'_{U'}$ and so we get a universal abelian scheme $\mc{A}'_{U'} \to \mc{X}'_{U'}$ lying over it. We use the ideas of \cite{Kis10, Kis18} to transport the $p$-divisible group $\mc{H}'_{U'} \coloneqq \mc{A}'_{U'}[p^\infty]$ to $p$-divisible groups over $\mc{X}_U$ and $\mc{X}''_{U''}$. In order to do so, we first give a description of $\mc{H}'_{U'}$ over $X'^+_{U'}$, and then describe an action of $\ms{E}(G')$ on it.
    

    Recall that $\ms{A}(G')^\circ$ depends only on the derived group $G'^{\der} = \Res_{F/\Q} B^1$, elements of norm $1 \in \Q$. The center is $Z(G'^{\der}) = \Res_{F/\Q} F^1$, elements of $F$ of norm $1$. By Shapiro's lemma, the adjoint map $G'(\Q) \to G'^{\ad}(\Q)$ is surjective and so we can write
    \[\ms{A}(G')^\circ = B^1/F^1.\]
    We can also determine $\ms{A}(G_S)$ for $S = X, X', X''$. We have that $G^{\ad}(\Z_{(p)})^+ = \mc{O}_{B, (p)}^{\times, +}/\mc{O}_{F, (p)}^{\times, +}$, where the $+$ superscript denotes the elements with norm that is totally positive in $F$. We also have that $G''^{\ad}(\Z_{(p)})^+ = (\mc{O}_{B, (p)}^{\times, +} \times_{\mc{O}_{F, (p)}^{\times, +}} \mc{O}_{E, (p)}^{\times, +})/\mc{O}_{E, (p)}^{\times, +} \cong \mc{O}_{B, (p)}^{\times, +}/\mc{O}_{F, (p)}^{\times, +}$ because the norm of all elements of $E^\times$ are totally positive in $F$. Thus, it follows that
    \[\ms{A}(G) = G(\mb{A}_f)/Z(\Q)^- *_{G(\Q)_+/Z(\Q)}G^{\ad}(\Q)^+ = (B \otimes_\Q \mb{A}_f)^\times/\mc{O}_{F}^{\times, -},\]
    and
    \[\ms{A}(G_p) = G(\mb{A}_f^p)/Z(\Z_{(p)})^- *_{G(\Z_{(p)})_+/Z(\Z_{(p)})}G^{\ad}(\Z_{(p)})^+ = (B^p)^\times/\mc{O}_{F, (p)}^{\times, -},\]
    where $B^p \coloneqq B \otimes_\Q \mb{A}_f^p$. Similarly, it follows that $\ms{A}(G'') = G''(\mb{A}_F)/\mc{O}_{E}^{\times, -}$ and $\ms{A}(G''_p) = G''(\mb{A}_f^p)/\mc{O}_{E, (p)}^{\times, -}$. Shapiro's lemma does not apply to $G'$, but we can write
    \[\ms{A}(G') = G'(\mb{A}_f)B^{\times, +}/E^{\times, -}\]
    and
    \[\ms{A}(G'_p) = G'(\mb{A}_f^p)G''(\Z_{(p)})_+/\mc{O}_{E, (p)}^{\times, -}.\]
    Moreover, we have
    \[\ms{A}(G'_p)^\circ \cong \ms{A}(G^{\der}_p)^\circ  \cong \mc{O}_{B, (p)}^1/\mc{O}_{F, (p)}^1\]

    Over $X'^+$, the $p$-divisible group $\mc{H}'$ can be described as
    \[H'|_{X'^+} = B_p/\mc{O}_{B, p} \times X'^+ = B_p/\mc{O}_{B, p} \times B^1 \backslash [(\mc{H}^+)^\Sigma \times (B \otimes_\Q \mb{A}_f)^1].\]
    To descend down to $X'^+_{U'_p}$, we quotient by the action of $G'^{\der}_p(\Z_p) = \mc{O}_{B, p}^1$ to get
    \[H'^+_{U'_p} \coloneqq H'|_{X'^+_{U'_p}} = [B_p/\mc{O}_{B, p} \times X'^+]/\mc{O}_{B, p}^1,\]
    where $U'_p = G'^{\der}_p(\Z_p) = \mc{O}_{B, p}^1$ acts on $B_p/\mc{O}_{B, p}$ by right multiplication.
    
    We can now describe $H'$ over all of $X'$. Under the isomorphism of $\clos{E_{X'}}$-schemes
    \[X' \cong [\ms{A}(G') \times X'^+]/\ms{A}(G')^\circ = [\ms{A}(G') \times X'^+]/B^1,\]
    the $p$-divisible group can be written as
    \[H'|_{X'} \cong B_p/\mc{O}_{B, p} \times [\ms{A}(G') \times X'^+]/B^1.\]
    After dividing by $G'_p(\Z_p)$, we get
    \[H'_{U'_p} |_{X'_{U'_p}} \cong [\ms{A}(G'_p) \times [B_p/\mc{O}_{B, p} \times X'^+]/\mc{O}_{B, p}^1]/\mc{O}_{B, (p)}^1 \cong [\ms{A}(G'_p) \times H'^+_{U'_p}]/\mc{O}^1_{B, (p)},\]
    where $\ms{A}(G'_p)^\circ = \mc{O}_{B, (p)}^1/\mc{O}_{F, (p)}^1 \subset (B \otimes_\Q \mb{A}_f^p)^\times/(F \otimes_\Q \mb{A}_f^p)^\times$ acts trivially on $B_p/\mc{O}_{B, p}$. In this way, we can also define $p$-divisible groups $H_{U_p}, H''_{U''_p}$ over $X_{U_p}, X''_{U''_p}$ respectively as
    \[H_{U_p}|_{X_{U_p}} \coloneqq \paren*{\ms{A}(G_p) \times [B_p/\mc{O}_{B, p} \times X'^+]/\mc{O}_{B, p}^1}/\ms{A}(G'_p)^\circ\]
    \[\cong \paren*{B^{p, \times}/\mc{O}_{F, (p)}^\times \times H'^+_{U'_p}}/\mc{O}_{B, (p)}^1,\]
    and
    \[H''_{U''_p}|_{X''_{U''_p}} \coloneqq \paren*{\ms{A}(G''_p) \times [B_p/\mc{O}_{B, p} \times X'^+]/\mc{O}_{B, p}^1}/\ms{A}(G'_p)^\circ\]
    \[\cong \paren*{G''(\mb{A}_f^p)/\mc{O}_{E, (p)}^\times \times H'^+_{U'_p}}/\mc{O}_{B, (p)}^1,\]

    \begin{thm}
        Whenever $U \subset U(N)$ (resp. $U'' \subset U''(N)$) for $N \ge 3$, there exists a $p$-divisible group $\mc{H}_{U}$ over $\mc{X}_{U}$ with an $\mc{O}_{E, p}$-action (resp. $\mc{H}''_{U''}$ over $\mc{X}''_{U''}$) such that the formal completion $\widehat{\mc{X}_U}$ (resp. $\widehat{\mc{X}''_{U''}}$) along its special fiber over $\clos{k_{E_X, v}}$ (resp. $\clos{k_{E_{X''}, v''}}$) is the universal deformation space of $\mc{H}_{\clos{k_{E_X, v}}}$ (resp. $\mc{H}_{\clos{k_{E_{X''}, v''}}}$).

    \end{thm}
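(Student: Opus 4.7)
The plan is to construct $\mc{H}_U$ and $\mc{H}''_{U''}$ by transporting the $p$-divisible group $\mc{H}'_{U'} \coloneqq \mc{A}'_{U'}[p^\infty]$ coming from the universal abelian scheme on the PEL integral model $\mc{X}'_{U'}$, following exactly the same template used in the previous subsection to transport the integral models themselves. On $\mc{X}'_{U'}$ this object exists by the PEL moduli interpretation and inherits an $\mc{O}_{E,p}$-action from $\iota$, as well as a polarization from $\theta$.

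First I would pass to the connected component to obtain $\mc{H}'^+_{U'_p}$ over $\mc{X}'^+_{U'_p}$, and show that the action of $\ms{E}(G'_p)$ on $\mc{X}'^+_{U'_p}$ lifts to an equivariant action on $\mc{H}'^+_{U'_p}$. Over the generic fiber this is visible from the explicit description $H'|_{X'^+} \cong B_p/\mc{O}_{B,p} \times X'^+$ given before the theorem. When $p$ is unramified in $E_{X'}$, the integral lift follows from the extension property at hyperspecial level proved in \cite[Lem.~3.4.5]{Kis10}; in the ramified case, one invokes \cite[Cor.~4.6.15]{Kis18}, which provides the analogous extension of $\ms{E}(G'_p)$-equivariance on the $p$-divisible group.

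Next I would define
$$\mc{H}_{U_p} \coloneqq \bigl[\ms{A}(G_p) \times \mc{H}'^+_{U'_p}\bigr]/\ms{A}(G'_p)^\circ, \qquad \mc{H}''_{U''_p} \coloneqq \bigl[\ms{A}(G''_p) \times \mc{H}'^+_{U'_p}\bigr]/\ms{A}(G'_p)^\circ,$$
living over $\mc{X}_{U_p}$ and $\mc{X}''_{U''_p}$ respectively, using that $\ms{A}(G'_p)^\circ \cong \ms{A}(G_p)^\circ$ embeds into $\ms{A}(G_p)$ and similarly into $\ms{A}(G''_p)$. The $\mc{O}_{E,p}$-action descends because it is central with respect to the quotienting actions. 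Via the isomorphism $\ms{A}(G_p) *_{\ms{A}(G'_p)^\circ} \ms{E}(G'_p) \cong \ms{A}(G_p) \times \Gal(E_X^p/E_X)$ one obtains a $\Gal(E_X^p/E_X)$-action, and Galois descent produces $\mc{H}_U$ over $\mc{O}_{E_X, v}$; the construction of $\mc{H}''_{U''}$ is identical. For $U$ maximal at all primes, take the further quotient by $U/U(p)$ just as in the construction of $\mc{X}_U$.

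For the universal deformation property, I would invoke Serre--Tate together with the fact that $\widehat{\mc{X}'_{U'}}$ along its special fiber is the universal deformation space of $\mc{H}'_{U'}$ with its PEL-type structures, which is standard for PEL integral models. Since the quotient construction is \'etale-local and the schemes $\mc{X}_U$ and $\mc{X}''_{U''}$ are \'etale-locally isomorphic to $\mc{X}'_{U'}$, while the $p$-divisible groups $\mc{H}_U$ and $\mc{H}''_{U''}$ are \'etale-locally pulled back from $\mc{H}'_{U'}$, the corresponding universal deformation statement transfers. The main obstacle is the integral lift of the $\ms{E}(G'_p)$-action on $\mc{H}'^+_{U'_p}$ in the ramified case, where the hyperspecial extension property fails and one has to rely on the finer results of \cite{Kis18}; once this is in place, the Galois descent and the deformation-theoretic comparison are formal consequences of the corresponding facts for $\mc{X}'_{U'}$ and $\mc{H}'_{U'}$.
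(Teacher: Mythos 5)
Your proposal is correct and follows essentially the same route as the paper: you transport $\mc{H}'_{U'} = \mc{A}'_{U'}[p^\infty]$ from the PEL model $\mc{X}'_{U'}$ by restricting to the connected component, twisting by $\ms{A}(G_p)$ and $\ms{A}(G''_p)$ modulo $\ms{A}(G'_p)^\circ$, and descending by Galois; the universal deformation statement is then reduced to the one for $\mc{X}'_{U'}$, which holds by its moduli interpretation. The only cosmetic difference is that the paper justifies the extension of the $\ms{E}(G'_p)$-action to $\mc{H}'^+_{U'_p}$ directly (the action is taken to be trivial on the $B_p/\mc{O}_{B,p}$ factor), while you invoke the extension-property lemmas of Kisin and Kisin--Pappas; both are fine, and your Serre--Tate remark makes explicit a step the paper leaves implicit.
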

    \begin{proof}
        We have already translated the $p$-divisible group $\mc{H}'_{U'_p}$ to $p$-divisible groups over the generic fiber $X_{U_p}$ and $X''_{U''_p}$. Over $\mc{X}'_{U'_p}$, we have an integral model for $\mc{H}'_{U'_p}$ by taking the $p^\infty$-torsion of the universal abelain scheme $\mc{A}'_{U'_p} \to \mc{X}'_{U'_p}$. We can restrict it to the connected component to get
        \[\mc{H}'^+_{U'_p} \coloneqq \mc{H}'_{U'_p}|_{\mc{X}'^+_{U'_p}}.\]
        Set
        \[\mc{H}_{U_p} \coloneqq \paren*{\ms{A}(G_p) \times \mc{H}'^+_{U'_p}}/\ms{A}(G'_p)^\circ,\]
        and
        \[\mc{H''}_{U''_p} \coloneqq \paren*{\ms{A}(G''_p) \times \mc{H}'^+_{U'_p}}/\ms{A}(G'_p)^\circ.\]
        The action of $\ms{E}(G^{\der}_p)$ on $\mc{X}'^+_{U'_p}$ extends to an action on $\mc{H}'^+_{U'_p}$ by acting trivially on $B_p/\mc{O}_{B, p}$ and thus there is an action of $\ms{A}(G_p) \times \Gal(E_X^p/E_X)$ on $\mc{H}_{U_p}$ that is compatible with the structure morphism $\mc{H}_{U_p} \to \mc{X}_{U_p}$. Hence, we can descend $\mc{H}_{U_p}$ and $\mc{H}''_{U''_p}$ down to $p$-divisible groups defined over $\mc{O}_{E_X, v}$ and $\mc{O}_{E_{X''}, v''}$ respectively, whose generic fibers can be identified with $H_{U_p}$ and $H''_{U''_p}$ in a way respecting the structure morphisms down to $X_{U_p}$ and $X''_{U''_p}$. For finite level, we can simply take $\mc{H}_{U_pU^p} = \mc{H}_{U_p}/U^p$ where $U^p$ acts below on $\mc{X}_{U_p}$ and acts trivially on the fibers of $\mc{H}_{U_p} \to \mc{X}_{U_p}$.
        
        The statements about universal deformation spaces and $\mc{O}_{F, (p)}$-action follow from the corresponding statements for $\mc{H}'^+_{U'}$ over $\mc{X}'^+_{U'}$. These follow from $\mc{X}'_{U'}$ representing the functor of isomorphism classes of abelian schemes whose $p^\infty$-torsion is $\mc{H}'_{U'}$.
    \end{proof}
    
    \section{Kodaira-Spencer Map}\label{sec:Hodge bundle}
    
    In order to calculate the height of a partial CM point, we will take the height of a special point of $\mc{X}_U$ with respect to metrized Hodge bundle $\widehat{\mc{L}_U}$ on $\mc{X}_U$, which we will introduce. Afterwards, we will relate this Hodge bundle with the Lie groups of the $p$-divisible group $\mc{H}_U$ over $\mc{X}_U$.
    
    Following \cite{Yua13}, we define the system of Hodge bundles $\{\mc{L}_U\}_U$ as the canonical bundle
    \[\mc{L}_U \coloneqq \omega_{\mc{X}_U/\mc{O}_{E_X}}.\]
    Since $\mc{X}_U$ is normal, the singularities of $\mc{X}_U$ have codimension at least $2$ and so the sheaf $\mc{L}_U$ is indeed a line bundle. The benefit of using the canonical bundle is that the system $\{\mc{L}_U\}_U$ is compatible with pullbacks along the canonical maps $\mc{X}_{U_1} \to \mc{X}_{U_2}$ with $U_1 \subset U_2$. We call $\mc{L}_U$ the \emph{Hodge bundle} of $\mc{X}_U$. At the infinite places, the metric is given by the Petersson metric
	\[\norm*{\bigwedge_{\sigma \in \Sigma} dz_\sigma} = \prod_{\sigma \in \Sigma} \mathrm{Im}(2z_\sigma).\]
	
	We now relate this line bundle with our $p$-divisible group via a Kodaira-Spencer type map. Suppose $K/E_X$ is a finite extension that contains the normal closure of $E$ and let $S = \Spec \mc{O}_{K} \otimes_\Z \Z_p$. Base change $\mc{X}_U$ and $\mc{H}_U$ to $S$ and set $\Omega(\mc{H}_S) \coloneqq \Lie(\mc{H}_S)^\vee$ and $\Omega(\mc{H}^t_S) \coloneqq \Lie(\mc{H}^t_S)^\vee$. Let $\mb{D}(\mc{H}_S)$ and $\mb{D}(\mc{H}_S^t)$ be the covariant Dieudonn\'e crystals attached to the $p$-divisible groups. Then \cite[Chap. IV]{Mes72} gives us a short exact sequence
	    \[0 \to \Lie(\mc{H}_S^t)^\vee \to \mb{D}(\mc{H}_S) \to \Lie(\mc{H}_S) \to 0\]
	    of $\mc{O}_S \otimes \mc{O}_{E}$ modules. Applying the Gauss--Manin connection $\nabla$ on $\mb{D}(\mc{H}_S)$ gives the chain of maps
     \[\Omega(\mc{H}_S^t) \to \mb{D}(\mc{H}_S) \overset{\nabla}{\to} \mb{D}(\mc{H}_S) \otimes \Omega_{\mc{X}_S/S}^1 \to \Omega(\mc{H}_S)^\vee \otimes \Omega_{\mc{X}_S/S}^1,\]
     which gives a map
     \[\Omega_{\mc{X}_S/S}^{1, \vee} \to \Hom(\Omega(\mc{H}_S^t), \Omega(\mc{H}_S)^\vee).\]
    
    Both $\Omega(\mc{H}_S)^\vee$ and $\Omega(\mc{H}^t_S)^\vee$ have an action by $\mc{O}_E$ whose determinant is the product of the reflex norms of $\phib \sqcup \phib'$ and $\conj{\phib} \sqcup \phib'$. We can thus decompose the line bundles over $S$ as
	\[\Omega(\mc{H}_S)^\vee \to \bigoplus_{\tau \in \Hom(E, \C)} \Omega(\mc{H}_S)^\vee_\tau,\]
    where $\Omega(\mc{H}_S)_\tau^\vee \coloneqq \Omega(\mc{H}_S)^\vee \otimes_{\mc{O}_S \otimes \mc{O}_E, \tau} \mc{O}_S$ and $\mc{O}_E$ acts on $\mc{O}_S$ through fixing an inclusion of $\clos{E_X} \to \C$. Set $\omega(\mc{H}_S)_\tau \coloneqq \det\Omega(\mc{H}_S)_\tau$. We define $\Omega(\mc{H}_S^t)_\tau$ and $\omega(\mc{H}_S^t)_\tau$ similarly. Thus, we get a map
    \[\Omega_{\mc{X}_S/S}^{1, \vee} \to \Hom(\Omega(\mc{H}_S^t), \Omega(\mc{H}_S)^\vee) \to \bigoplus_{(\tau, \tau') \in \Hom(E, \C)^2} \Omega(\mc{H}_S^t)_\tau^\vee \otimes \Omega(\mc{H}_S)^\vee_{\tau'}\]
    The rank of $\Omega(\mc{H}_S)_\tau$ is $1$ if $\tau \in \phib$, $2$ if $\tau \in \phib'$, and $0$ if $\tau \in \conj{\phib'}$. The rank of $\Omega(\mc{H}_S^t)_\tau$ is $2 - \dim \Omega(\mc{H}_S)_\tau$. So, by projecting, we get the map of vector bundles of equal rank
    \[\Omega_{\mc{X}_S/S}^{1, \vee} \to \bigoplus_{\tau \in \phib} \omega(\mc{H}_S^t)_\tau^\vee  \otimes \omega(\mc{H}_S)_\tau^\vee.\]
    
    Taking the determinant of this map and repeating the map for $\conj{\phib}$ gives the map
    \[\omega_{\mc{X}_S/S}^{-2} \to \bigotimes_{\tau \in \phib} \mc{N}(\mc{H}_S, \tau)^\vee \otimes \mc{N}(\mc{H}_S, \conj{\tau})^\vee,\]
    where $\mc{N}(\mc{H}_S, \tau) \cong \omega(\mc{H}_S)_\tau \otimes \omega(\mc{H}_S^t)_{\conj{\tau}}$.

    Note that $E_X = E_{\phib \sqcup \conj{\phib}}$ is the reflex field of the set $\phib \sqcup \conj{\phib}$. So, we can descend this map to a map over $\Spec \mc{O}_{E_X, p}$. This is our Kodaira-Spencer map.
    
    Let $\Sigma_E \subset \Hom(E, \C)$ be the places that lie above a place of $F$ in $\Sigma$. Recall from Definition \ref{def:CM Type Determinant} the relative discriminant $\mf{d}_{\Sigma} \coloneqq \mf{d}_{\Sigma_E}$. Let $\mf{d}_{\Sigma, p} \coloneqq \mf{d}_{\Sigma} \otimes \Z_p$ be the $p$-part of this ideal. View $\mf{d}_{\Sigma, p}$ as a divisor of $S$. Moreover, let $\mf{d}_{B, p}$ be the divisor corresponding to the ramification of $B$ above $p$.

    \begin{thm}\label{thm:Hodge Bundle p Divisible Group}
	    For any choice of partial CM-type $\phib$ lying above $\Sigma$, we have
	    \[\omega_{\mc{X}_S/S}^2(\mf{d}_{\Sigma, p}\mf{d}_{B, p})^{-1} \cong \bigotimes_{\tau \in \phib} \mc{N}(\mc{H}_S, \tau) \otimes \mc{N}(\mc{H}_S, \conj{\tau}).\]
	\end{thm}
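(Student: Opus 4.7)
The plan is to view the construction preceding the theorem as producing a nonzero morphism
\[\kappa\colon \bigotimes_{\tau \in \phib} \mc{N}(\mc{H}_S, \tau) \otimes \mc{N}(\mc{H}_S, \conj{\tau}) \to \omega_{\mc{X}_S/S}^2\]
of line bundles on the normal scheme $\mc{X}_S$, and to identify its divisor as $\mf{d}_{\Sigma, p}\mf{d}_{B, p}$. On the generic fiber $X_S$ the complex-analytic Kodaira--Spencer isomorphism (coming from the explicit $(\mc{H}^\pm)^\Sigma$ uniformization and the description of $\mc{H}$ over $X^+$ as $B_p/\mc{O}_{B,p} \times X^+$) shows that $\kappa$ is an isomorphism, so its divisor is supported on the special fiber. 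The computation is then local along each vertical component lying over a prime $\mf{p}$ of $F$ above $p$.

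To perform the local computation, I would reduce from $\mc{X}_S$ to the PEL integral model $\mc{X}'_S$. The Kisin construction of Section \ref{sec:Integral Model} makes $\mc{X}_S$ and $\mc{X}'_S$ \'etale locally isomorphic via the $\ms{A}(G_p)$-twist, and the $p$-divisible groups $\mc{H}$ and $\mc{H}'$ share the same Dieudonn\'e crystal up to the $\ms{A}(G'_p)^\circ$-twist, which acts trivially on the $p$-divisible fiber $B_p/\mc{O}_{B,p}$. Hence the Gauss--Manin construction of $\kappa$ pulls back from the Kodaira--Spencer map on $\mc{X}'_S$ associated with the universal abelian scheme $\mc{A}'$. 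Decomposing both sides into $\mf{p}$-typical components via the $\mc{O}_F \otimes \Z_p$-action further reduces the problem to an $\mf{p}$-local computation on the local model $\mb{M}^{\mathrm{loc}}$ using the explicit lattice $\Lambda_\mf{p}$ from Section \ref{sec:Integral Model}.

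The local computation splits into three cases. At primes $\mf{p}$ unramified in both $E$ and $B$, the local model is smooth, $\mc{O}_{E,\mf{p}}$ is an unramified \'etale $\mc{O}_{F,\mf{p}}$-algebra, and $\kappa_\mf{p}$ is an isomorphism of line bundles, so no contribution arises. At primes $\mf{p}$ ramified in $E$ but not in $B$, the failure of $\mc{O}_{E,\mf{p}}/\mc{O}_{F,\mf{p}}$ to be \'etale means the $\tau$-isotypic decomposition of $\Omega(\mc{H}^t_S)^\vee \otimes \Omega(\mc{H}_S)^\vee$ is defective by exactly the relative different, contributing $\mf{d}_{\Sigma, \mf{p}}$ to $\mathrm{div}(\kappa)$ once the reflex-norm bookkeeping of Definition \ref{def:CM Type Determinant} is unwound. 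At primes $\mf{p}$ ramified in $B$ (and hence in $E$), the maximal order $\mc{O}_{B,\mf{p}} = \mc{O}_{E,\mf{p}} + \mc{O}_{E,\mf{p}} j$ with $je = \conj{e}j$ produces an extra $j$-factor in the deformation computation, contributing the additional $\mf{d}_{B,\mf{p}}$. Summing over all $\mf{p}$ above $p$ yields the claimed divisor.

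The main obstacle is this last case. At primes where $B$ ramifies the local model $\mb{M}^{\mathrm{loc}}$ is not smooth, and one must show that the element $j$, which mixes the $\tau$ and $\conj{\tau}$ isotypic components of the Dieudonn\'e module, yields the precise valuation $v_\mf{p}(\mf{d}_{B,\mf{p}})$. This uses the explicit form of the pairing $\psi_\mf{p}$ computed in the lemma preceding the definition of $\mc{F}'_{U'_pU'^p}$, together with the observation that the chosen $\gamma \in \delta_{\mf{p}/p}^{-1}\mc{O}_{E,\mf{p}}^\times$ already absorbs the $F_\mf{p}/\Q_p$ different, so that only the $B_\mf{p}/F_\mf{p}$ piece of the relative discriminant remains to contribute.
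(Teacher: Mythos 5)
Your overall strategy---view the Kodaira--Spencer map as a morphism of line bundles, note it is an isomorphism on the generic fiber, and compute its divisor locally at each finite place, reducing through the PEL model $\mc{X}'_S$ and the $\ms{A}(G'_p)^\circ$-twist---is sound and close in spirit to the paper's, which implicitly relies on the same transport of $\mc{H}_S$ from $\mc{H}'_S$. However, your case analysis of the divisor contains a genuine error.

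You attribute the $\mf{d}_{\Sigma,\mf{p}}$ contribution to ``the failure of $\mc{O}_{E,\mf{p}}/\mc{O}_{F,\mf{p}}$ to be \'etale,'' i.e.\ to ramification of $E/F$, and conclude that primes unramified in both $E/F$ and $B$ contribute nothing. This is not correct: $\mf{d}_\Sigma = \mf{d}_{\phib\sqcup\conj\phib}$ in Definition \ref{def:CM Type Determinant} is the discriminant of the order generated by $\{\tau(\pi)\}_{\tau\in\phib\sqcup\conj\phib}$, hence involves all pairs $\tau\ne\tau'$ in $\phib\sqcup\conj\phib$, not only conjugate pairs $(\tau,\conj\tau)$. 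When two distinct $\tau, \tau' \in \phib$ restrict to distinct places of $F$ that become congruent above $p$---that is, when $F/\Q$ is ramified above $p$ at places seen by $\Sigma$---the difference $\tau(\pi)-\tau'(\pi)$ has positive valuation even if $E/F$ is unramified there. Such a prime lands in your ``no contribution'' case, yet the failure of the isotypic decomposition $\Omega(\mc{H}_S)\to\bigoplus_\tau\Omega(\mc{H}_S)_\tau$ to be an isomorphism there is exactly what the $\mf{d}_{\Sigma,p}$ twist records. For a concrete failure mode: if $\abs{\Sigma}=g$, then $\phib\sqcup\conj\phib=\Hom(E,\C)$ and $\mf{d}_\Sigma$ is essentially the full discriminant of $E/\Q$, so any prime ramified in $F/\Q$ but not in $E/F$ would be missed by your first case.

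The paper sidesteps this entirely by not splitting into ramification cases. It fixes a generator $\pi$ of $\mc{O}_{E,p}$, realizes $\Omega(\mc{H}_S)$ and $\Omega(\mc{H}_S^t)$ as $\mc{O}_S[t]$-quotients by explicit characteristic polynomials via \cite[Cor.~2.5]{Yua18}, and computes the index of the image of $\Omega(\mc{H}_S)$ inside $\prod_\tau\Omega(\mc{H}_S)_\tau$ directly as a Vandermonde determinant, which yields $\mf{d}_{\Sigma,p}^{1/2}$ uniformly; repeating for $\Omega(\mc{H}_S^t)$ gives the other $\mf{d}_{\Sigma,p}^{1/2}$. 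The $\mf{d}_B$ contribution is then isolated separately as the failure of the Dieudonn\'e pairing to be perfect (namely $\Lambda^\vee=\omega_\mf{q}^{-1}\Lambda$ at $B$-ramified primes), which matches your last case. You should replace your first two cases by this uniform index calculation; the case distinction you want is only between $B$-ramified and $B$-unramified primes, with the $\mf{d}_\Sigma$ part computed everywhere at once.
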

	\begin{proof}
	    We have the short exact sequence
	    \[0 \to \Lie(\mc{H}_S^t)^\vee \to \mb{D}(\mc{H}_S) \to \Lie(\mc{H}_S) \to 0\]
	    of $\mc{O}_S \otimes \mc{O}_{E}$ modules. Moreover, we have a pairing
     \[\mb{D}(\mc{H}_S) \times \mb{D}(\mc{H}_S^t) \to S\]
     that is well defined up to an element of $S^\times$. Fix a choice. The formal completion of $\mc{X}_S/S$ along its special fiber over $\clos{k(S)}$ is the universal deformation space of $\mc{H}_S$. Thus, \cite{Mes72} gives us that the tangent bundle $\Omega^{1, \vee}_{\mc{X}_S/S}$ corresponds to choosing a lift of $\Lie(\mc{H}_S^t)^\vee$ and $\Lie(\mc{H}_S)$ in $\mb{D}(\mc{H}_S)_{S'}$, where $S' = \Spec \mc{O}_{S}[\epsilon]/(\epsilon^2)$, that respects the pairing from $\psi_S$ and $\mc{O}_S \otimes \mc{O}_E$ action.

        For each $\tau \in \Hom(E, \C)$, we can take the $\tau$-component of the short exact sequence to get
        \[0 \to \Omega(\mc{H}_S^t)_{\tau} \to \mb{D}(\mc{H}_S)_{\tau} \to \Omega(\mc{H}_S)_{\tau}^\vee \to 0.\]
        For $\tau$ lying above $\Sigma^c$, either $\Omega(\mc{H}_S^t)_\tau$ or $\Omega(\mc{H}_S)_\tau$ is $0$ meaning that there is only one choice for a lift of $\Omega(\mc{H}_S^t)_\tau$ and $\Omega(\mc{H}_S)_\tau$. For $\tau$ lying above $\Sigma$, both are of rank $1$. The pairing $\psi_{S'}$ decomposes into an orthogonal sum of pairings
        \[\mb{D}(\mc{H}_S)_{S', \tau} \times \mb{D}(\mc{H}_S^t)_{S', \conj{\tau}} \to S'.\]
        Thus, choosing a lift of $\Omega(\mc{H}_S)_\tau$ determines the choice for $\Omega(\mc{H}_S)_{\conj{\tau}}$ under the canonical isomorphism $\mb{D}(\mc{H}_S^t) \cong \mb{D}(\mc{H}_S)^\vee$. The Hodge filtration gives us that the choice of lift of $\Omega(\mc{H}_S^t)_\tau$ is a torsor of $\Hom(\Omega(\mc{H}_S^t)_\tau, \Omega(\mc{H}_S)_\tau^\vee)$ giving us that the map
        \[\Omega_{\mc{X}_S/S}^{1, \vee} \to \Hom(\Omega(\mc{H}_S^t), \Omega(\mc{H}_S)^\vee) \to \bigoplus_{\tau \in \phib} \Hom(\Omega(\mc{H}_S^t)_\tau, \Omega(\mc{H}_S)_\tau^\vee)\]
        has finite cokernel. Taking determinants and repeating the process with $\conj{\phib}$ instead of $\phib$, we get
        \[\omega_{\mc{X}_S/S}^{-2} \to \bigotimes_{\tau \in \phib} \omega(\mc{H}_S^t)^\vee_\tau \otimes \omega(\mc{H}_S)_\tau^\vee \otimes \omega(\mc{H}_S^t)^\vee_{\conj{\tau}} \otimes \omega(\mc{H}_S)_{\conj{\tau}}^\vee.\]
        The cokernel of this map classifies the failure of when choosing a lift of $\Omega(\mc{H}_S)_\tau$ for each $\tau$ does not arise from choosing a lift of $\Omega(\mc{H}_S)$, and when choosing a lift of $\Omega(\mc{H}_S)_\tau$ does not result in a lift of $\Omega(\mc{H}_S)_{\conj{\tau}}$ due to when the pairing $\psi_{S'}$ is not perfect.

        Let $\pi \in \mc{O}_S$ be a generator for $\mc{O}_S$ over $\mc{O}_{E_X, p}$. For a subset $\Psi \subset \Hom(E, \C)$, let
        \[f_\Psi(t) = \prod_{\tau \in \Psi} (t - \tau(\pi)).\]
        We see that $f_{\phib \cup \conj{\phib}}(t) \in \mc{O}_{E_X, p}[t]$ because it is invariant under any automorphism that fixes the underlying places of $F$ under $\phib$. Then, we see that the image of $\mc{O}_{E_X, p} \otimes_\Z \mc{O}_E$ in $\widetilde{E_{\phib \sqcup \conj{\phib}, p}}$ is simply $\mc{O}_{E_X, p}[t]/f_{\phib \sqcup \conj{\phib}}(t)$, and so $\mf{d}_{\phib\sqcup \conj{\phib}, p} \subset \mc{O}_{E_X, p}$ is the ideal generated by the discriminant of $f_{\phib \sqcup \conj{\phib}}$,
	    
        By \cite[Cor. 2.5]{Yua18}, we have that
        \[\Omega(\mc{H}_S) \cong \frac{\mc{O}_S[t]}{f_{\phib \sqcup \phib'}(t)f_{\conj{\phib}\sqcup \phib'}(t)}, \qquad \Omega(\mc{H}_S^t) \cong \frac{\mc{O}_S[t]}{f_{\conj{\phib} \sqcup \conj{\phib'}}(t)f_{\phib\sqcup \conj{\phib'}}(t)}.\]
        Each element of $\omega_{\mc{X}_S/S}^\vee$ corresponds to an element of $\Omega(\mc{H}_S^t)^\vee \otimes \Omega(\mc{H}_S)^\vee$. So, the image in $\bigotimes_{\tau \in \phib} \omega(\mc{H}_S^t)^\vee_\tau \otimes \omega(\mc{H}_S)_\tau^\vee$ is the determinant of the image of $\Omega(\mc{H}_S^t) \otimes \Omega(\mc{H}_S)$ in
        \[\prod_{\tau \in \phib \cup \conj{\phib}} \Omega(\mc{H}_S^t)_\tau \otimes \Omega(\mc{H}_S)_\tau\cong \prod_{\tau \in \phib \sqcup \conj{\phib}} \mc{O}_S[t]/(t - \tau(\pi)) \otimes \mc{O}_S[t]/(t - \tau(\pi))\]
        under the map $t\mapsto (t, t, \dots, t)$. We deal with $\Omega(\mc{H}_S)$ and $\Omega(\mc{H}_S^t)$ separately. A basis for $\Omega(\mc{H}_S)$ is given by $1, t, \dots, t^{2\abs{\phib} - 1}$, so the lattice formed by the image of $\Omega(\mc{H}_S)$ in $\prod_{\tau \in \phib \sqcup \conj{\phib}} \Omega(\mc{H}_S)_\tau \cong \prod_{\tau \in \phib \sqcup \conj{\phib}} \mc{O}_{S, \tau}$ is generated by $\{(\tau(\pi)^i)_{\tau \in \phib \cup \conj{\phib}}\}_{0 \le i < 2\abs{\phib}}$. To calculate the index of this lattice relative to the maximal lattice, we take the determinant of a $2\abs{\phib} \times 2\abs{\phib}$ matrix whose $ij$-th element is $\tau_i(\pi)^j$. The ideal generated by the determinant of this Vandermonde matrix is
        \[\paren*{\prod_{1 \le i < j \le 2\abs{\phib}} \abs{\tau_j(\pi) - \tau_i(\pi)}} = \mf{d}_{\Sigma, p}^{1/2}.\]
        Doing the same for $\Omega(\mc{H}_S^t)$ nets an additional factor of $\mf{d}_{\Sigma, p}^{1/2}$.

        Finally, when $B$ is ramified, the pairing $\mb{D}(\mc{H}_S)_{S'} \times \mb{D}(\mc{H}_S^t)_{S'} \to S'$ is not perfect but rather $\Lambda^\vee = \frac{1}{\omega_\mf{q}}\Lambda$, meaning our choice in $\Omega(\mc{H}_S)$ must lie in $\omega_\mf{q} \Omega(\mc{H}_S)$, giving an additional factor of $(\omega_\mf{q}) = \mf{d}_{B, p}^{1/2}$ since $\mf{p}$ was specified to be ramified wherever $B$ was. Doing the same for $\mc{H}_S^t$ nets us another factor of $\mf{d}_{B, p}^{1/2}$.
	\end{proof}

	\section{Special Points}\label{sec:Special Point}
	
	To calculate heights of special points of $\mc{X}_U$, we relate the height to heights on $\mc{X}'_{U'}$, which represent Faltings heights, through $\mc{X}''_{U''}$. Let $(E, \phib)$ be a partial CM-type with $F \subset E$ the totally real subfield of index $2$ and let $\Sigma = \phib|_F \subset \Hom(F, \R)$. Let $B$ be a quaternion algebra over $F$ such that $B$ is ramified at infinity at $\Sigma \subset \Hom(F, \R)$ and whose finite ramification set is a subset of the primes for which $E$ is ramified. Then we can embed $E \to B$ because $E_\mf{p}$ embeds into $B_\mf{p}$ at every place $\mf{p}$ of $F$. Let $\{\mc{X}_U\}_U$ be the tower of Shimura varieties associated to this particular quaternion algebra $B$. The embedding $E \to B$ gives us an embedding of $T_E \coloneqq \Res_{E/\Q} \mb{G}_m \to G$ and hence we get a set of CM points of $X_U$ which are parametrized, under the complex uniformization, by points $(z, t) \in (\mc{H}^{\pm})^\Sigma \times G(\mb{A}_f)$ where $z$ is determined by the cocharacter $h_\sigma\colon \C \cong E_\tau \to B_\sigma$ for each $\tau \in \phib$ and $\sigma \colon F \to \R$ lying below it, and $t \in T_E(\mb{A}_f) \subset G(\mb{A}_f)$. Fix one of these CM points $P \in \mc{X}_U(\clos{\Q})$.
	
	Pick a complementary partial CM-type $\phib'$ to $\phib$. We can construct the tower $X'_{U'}$ which represents the functor $\mc{F}'_{U'}$ as before. For any choice of element $t' \in [(T_E \times T_E) \cap G'](\mb{A}_f)$, the cocharacter formed from $z \in (\mc{H}^\pm)^\Sigma$ and $h_E$ is a point $P' = [(z, h_E), t'] \in X'_{U'}(\clos{\Q})$, which represents an abelian variety $A'$ with multiplication by $\mc{O}_{E}$. From the determinant condition of $\mc{F}'_{U'}$, we have that $A'$ is isogenous to a product of abelian varieties $A_1 \times A_2$, one with CM by $E$ of type $\phib \sqcup \phib'$ and the other of CM-type $\conj{\phib}\sqcup \phib'$.
	
	We now compare the points $P$ and $P'$ by embedding both $X_U$ and $X'_{U'}$ into $X''_{U''}$. Recall that $G'' = \Res_{F/\Q} [(B^\times \times E^\times)/F^\times]$ where $F^\times \subset B^\times \times E^\times$ by $a \mapsto (a, a^{-1})$. This gives rise to the Shimura variety $X''_{U''}$. The embedding $G' \to G''$ gives an embedding $X'_{U'} \to X''_{U''}$. To relate $X_U$ and $X''_{U''}$, we take the group $\Res_{F/\Q} (B^\times \times E^\times)$ and the quotient map $\Res_{F/\Q} (B^\times \times E^\times) \to G''$. The group gives rise to a Shimura variety $X_U \times Y_J$, where $Y_J$ is the zero-dimensional Shimura variety associated with the datum of the torus $T_E$ and morphism $h_E$ as in the definition of $G''$, and $J \subset T_E(\mb{A}_f)$ an open compact subgroup. This quotient map of Shimura datum gives rise to a surjective morphism
	\[X_U \times Y_J \to X''_{U''}\]
	 of Shimura varieties, where $U'' = U \cdot J \subset G''(\mb{A}_f)$. Thus, we have the following morphisms of algebraic groups
  \[G \gets G \times T_E \to G'' \gets G'\]
  which gives rise to the chain of morphisms of Shimura varieties
  \[X_U \gets X_U \times Y_J \to X''_{U''} \gets X'_{U'}.\]
  However, given a point $y \in Y_J$, we are able to construct a section $X_U \to X_U \times Y_J \to X''_{U''}$.

    \begin{prop}
        Suppose $P \in X_U$ is a CM point corresponding to the embedding $E \hookrightarrow B$. We can choose $y \in Y_J$ and $P' \in X'_{U'}$ such that $P \in X_U$ and $P' \in X'_{U'}$ have the same image $P''$ in $X''_{U''}$.
    \end{prop}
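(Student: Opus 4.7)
The plan is to exploit the fact that the CM point $P$ factors through the torus $T_E \subset G$, and to choose the auxiliary data $s$ so as to cancel the reduced norm of $t$. Represent $P = [z_0, t]$ in the complex uniformization of $X_U$, where $z_0 \in (\mc{H}^\pm)^\Sigma$ is the fixed point of the CM cocharacter $h_P \colon \C^\times \to T_E(\R) \hookrightarrow G(\R)$ determined by $E \hookrightarrow B$ and $\phib$, and $t \in T_E(\mb{A}_f) \subset G(\mb{A}_f)$. A point of $Y_J$ is of the form $y = [h_E, s]$ with $s \in T_E(\mb{A}_f)$, and the quotient map $G \times T_E \to G''$ sends $(P, y)$ to $[z_0, (t, s) \bmod F^\times] \in X''_{U''}$. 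Here I implicitly identify the Hermitian domain $(\mc{H}^\pm)^\Sigma$ common to all three Shimura varieties, noting that $(h_P, h_E) \bmod F^\times$ is $G''(\R)$-conjugate to $h''$.

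The key observation is that setting $s = t^{-1} \in T_E(\mb{A}_f)$ cancels the reduced norm. Indeed, since the reduced norm on $T_E \subset G$ coincides with $N_{E/F}$ (the canonical involution on $B$ restricts to complex conjugation on $E$), one computes
\[\nu((t, t^{-1})) = N_{B/F}(t) \cdot N_{E/F}(t^{-1}) = N_{E/F}(t) \cdot N_{E/F}(t)^{-1} = 1,\]
which lies in $\mb{A}_f^\times$, so $(t, t^{-1}) \bmod F^\times \in G'(\mb{A}_f) = \set{g \in G''(\mb{A}_f) : \nu(g) \in \mb{A}_f^\times}$. Define $t' \coloneqq (t, t^{-1}) \bmod F^\times$, $y \coloneqq [h_E, t^{-1}] \in Y_J$, and $P' \coloneqq [z_0, t'] \in X'_{U'}$. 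The analogous archimedean computation $\nu(h_P(z), h_E(z)) = \abs{z}^2$, which lies diagonally in $\R^\times \subset (F \otimes_\Q \R)^\times$, shows that the cocharacter $(h_P, h_E) \bmod F^\times$ of $G''(\R)$ factors through $G'(\R)$, confirming that $z_0$ is a valid Hermitian domain point for $X'_{U'}$ and that $P'$ is well defined.

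By construction, the images of $(P, y)$ in $X''_{U''}$ via $X_U \times Y_J \to X''_{U''}$ and of $P'$ via $X'_{U'} \hookrightarrow X''_{U''}$ are both equal to $P'' \coloneqq [z_0, (t, t^{-1}) \bmod F^\times]$. The step that requires the most care is the archimedean bookkeeping: checking that the three CM cocharacters $h_P$, $(h_P, h_E) \bmod F^\times$, and its $G'$-lift really correspond to the same $z_0 \in (\mc{H}^\pm)^\Sigma$ under the identifications of Section \ref{sec:Quaternionic Shimura Variety}. This is a direct unwinding of the CM datum, using that $\phib|_F = \Sigma$ controls where the cocharacters $h$ and $h_E$ are nontrivial.
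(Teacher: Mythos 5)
Your proof is correct and follows essentially the same route as the paper's: choose a representative $[z, t]$ for $P$, set the $Y_J$-component to $t^{-1}$ so that $\nu(t, t^{-1}) = 1$, and verify that $t' = (t, t^{-1})$ lands in $G'(\mb{A}_f)$. The extra bookkeeping you do for the reduced norm identification on $T_E$ and for the archimedean cocharacters is a welcome elaboration of details the paper leaves implicit, but the underlying idea is identical.
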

    \begin{proof}
        For a given $P \in X_U$, we can choose a representative $[z, t] \in (\mc{H}^\pm)^\Sigma \times G(\mb{A}_f)$ under the complex uniformization. If we let $t' = (t, t^{-1})$, then $t' \in (T_E \times T_E \cap G')(\mb{A}_f)$ because $\nu(t') = 1 \in \mb{G}_m$. Letting $y \in Y_J$ be the point corresponding to the choice of $t^{-1} \in T_E(\mb{A}_f)$ makes it so $(P, y) \in X_U \times Y_J$ and $P' = [z \times h_E, t'] \in X'_{U'}$ have the same image in $X''_{U''}$.
    \end{proof}
	
	All of the geometric points of $Y_J$ are defined over $E_{X'}$, so the integral model $\mc{X}_U$ for $X_U$ gives rise to an integral model $\mc{X}_U \times \mc{Y}_J$ for $X_U \times Y_J$. We have a $p$-divisible group $\mc{H}''_{U''}$ defined over $\mc{X}''_{U''}$. We also define a $p$-divisible group $I$ over $Y_J$ by setting
	\[I_J \coloneqq (E_p/\mc{O}_{E, p} \times Y)/J.\]
	
%
	
	Let $K/E_{X'}$ be a finite extension. Suppose that we have points $x \in X_U(K)$ and $y \in Y_J(K)$. These give rise to a point $x'' \in X''_{U''}(K)$. By \cite[Prop 5.2]{Yua18}, we can extend $I_J$ to a $p$-divisible group $\mc{I}_y$ over the closure of the point $y$ in $\mc{Y}_J$.
	
	\begin{prop}[{\cite[Prop 5.3]{Yua18}}]\label{prop:compare p divisible groups}
	        There are canonical isomorphisms
	        \[\Lie(\mc{H}''_{x''}) \cong \Lie(\mc{H}_x) \otimes_{\mc{O}_{E, p} \otimes \mc{O}_{K}} \Lie(\mc{I}^t_{y})^\vee,\quad \Lie(\mc{H}''^t_{x''}) \cong \Lie(\mc{H}^t_x) \otimes_{\mc{O}_{E, p} \otimes \mc{O}_{K}} \Lie(\mc{I}^t_{y}).\]
	\end{prop}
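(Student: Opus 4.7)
The plan is to reduce the two asserted isomorphisms to a comparison over the connected component $\mc{X}'^+_{U'_p}$ and then to propagate the result to $\mc{X}_U$ and $\mc{X}''_{U''}$ via the Kisin-style transfer procedure used throughout Section~\ref{sec:p divisible group}. Since both sides of each identity are finite free $\mc{O}_{E,p} \otimes \mc{O}_K$-modules on flat $\mc{O}_K$-schemes and $\Lie$ commutes with flat base change, it suffices to construct canonical candidate maps and then verify the isomorphism étale-locally at the points $x$, $y$, $x''$.

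The key step is to exploit the parallel descriptions
\[
\mc{H}_{U_p} = [\ms{A}(G_p) \times \mc{H}'^+_{U'_p}]/\ms{A}(G'_p)^\circ, \qquad \mc{H}''_{U''_p} = [\ms{A}(G''_p) \times \mc{H}'^+_{U'_p}]/\ms{A}(G'_p)^\circ,
\]
together with the morphism of algebraic groups $G \times T_E \to G''$ induced by the quotient $B^\times \times E^\times \to (B^\times \times E^\times)/F^\times$. Unwinding the star products shows that $\mc{H}''_{U''}$ at the image point $x''$ is obtained as a Serre-type tensor construction from $\mc{H}_U$ at $x$ and the $p$-divisible group $\mc{I}_J$ at $y$ on $\mc{Y}_J$. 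The convention $F^\times \hookrightarrow B^\times \times E^\times$, $a \mapsto (a, a^{-1})$, inverts the $E^\times$ factor, and at the level of $p$-divisible groups this inversion translates into passing to the dual — which is precisely why $\mc{I}^t_y$ rather than $\mc{I}_y$ enters the statement.

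Taking Lie algebras of this Serre tensor identification, and using that $\Lie(\mc{I}_y)$ and $\Lie(\mc{I}^t_y)$ are invertible as $\mc{O}_{E,p} \otimes \mc{O}_K$-modules because $\mc{I}_J$ has CM by $E$, yields the first isomorphism. The second isomorphism then follows either by rerunning the argument on the dual $p$-divisible groups (whose transfer is entirely parallel, with the cocharacter replaced by its complex conjugate), or equivalently from the first together with the polarization-induced pairings $\mc{H} \times \mc{H}^t \to \mu_{p^\infty}$, $\mc{H}'' \times \mc{H}''^t \to \mu_{p^\infty}$, and $\mc{I} \times \mc{I}^t \to \mu_{p^\infty}$.

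The principal obstacle is reconciling the three \emph{a priori} distinct $\mc{O}_{E,p}$-actions on the objects involved: on $\mc{H}$ it comes from the embedding $E \hookrightarrow B$, on $\mc{I}$ from the tautological action of $T_E$, and on $\mc{H}''$ from the $E^\times$ factor of $G''$. Showing that all three descend compatibly to a single action for which the tensor product over $\mc{O}_{E,p} \otimes \mc{O}_K$ makes sense, and for which the Lie-algebra isomorphism is canonical, requires carefully tracking the decomposition $B \otimes_F E \cong M_2(E)$ through the construction and invoking the trace (Kottwitz) conditions imposed on $\Lie(\mc{A}')$ over $\mc{X}'$ to pin down the $E$-isotypic summands on both sides.
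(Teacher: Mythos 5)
The paper does not actually prove this statement: it is quoted directly as \cite[Prop.\ 5.3]{Yua18}, with the label ``Proposition'' carrying the citation and no argument supplied. So there is no in-paper proof to compare against. The question is therefore only whether your sketch is a correct proof, and I don't think it yet is.

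Your high-level identification is sound: because the representation of $G'' = (B^\times\times E^\times)/F^\times$ on $V=B$ is $(b,e)\cdot v = ev\conj{b}$, one does have a canonical isomorphism $V_{G''} \cong V_G\otimes_E V_{T_E}$ of $(B^\times\times E^\times)$-representations, so the Dieudonn\'e modules should satisfy $\mb{D}(\mc{H}''_{x''})\cong \mb{D}(\mc{H}_x)\otimes_{\mc{O}_{E,p}\otimes\mc{O}_K}\mb{D}(\mc{I}_y)$. This is the right skeleton. But the proposition is a statement about the graded pieces of the Hodge filtration, and that is precisely what your sketch does not actually compute. Passing from the tensor identity on $\mb{D}$ to the claimed identity on $\Lie$ is not formal: the filtration on a tensor product of two weight $(-1)$ filtered modules has three nontrivial graded pieces, not two, and recovering a two-step filtration requires using that $\mc{I}_y$ is a CM $p$-divisible group for which at each $\tau$-component either $\Lie(\mc{I}_y)_\tau$ or $\Lie(\mc{I}_y^t)^\vee_\tau$ vanishes. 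You flag exactly this bookkeeping (``reconciling the three \emph{a priori} distinct $\mc{O}_{E,p}$-actions'', ``carefully tracking the decomposition $B\otimes_F E\cong M_2(E)$'') as ``the principal obstacle'', but you do not resolve it; the proposal ends by describing what a proof would need to do rather than doing it.

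I also think the mechanism you offer for the appearance of $\mc{I}^t_y$ is wrong. You attribute it to the convention $F^\times\hookrightarrow B^\times\times E^\times$, $a\mapsto(a,a^{-1})$, saying the inversion ``translates into passing to the dual.'' But that embedding is the \emph{kernel} of the defining quotient and acts trivially on $V_{G''}$ --- when you compute $V_{G''}$ as $V_G\otimes_E V_{T_E}$, the $(a,a^{-1})$ convention is precisely what makes the tensor over $E$ well defined, and no dual appears at that stage. The dual $\mc{I}^t_y$, together with the additional $\mc{O}_K$-linear dual giving $\Lie(\mc{I}^t_y)^\vee$, arises instead from Messing's exact sequence $0\to\Lie(\mc{I}^t_y)^\vee\to\mb{D}(\mc{I}_y)\to\Lie(\mc{I}_y)\to 0$: the term $\Lie(\mc{I}^t_y)^\vee$ is $\Fil^1\mb{D}(\mc{I}_y)$, and the proposition is really the statement that, $\tau$-component by $\tau$-component, $\Gr^0$ of $\mb{D}(\mc{H}_x)\otimes\mb{D}(\mc{I}_y)$ equals $\Gr^0\mb{D}(\mc{H}_x)\otimes\Fil^1\mb{D}(\mc{I}_y)$, using the rank-$1$ (over $\mc{O}_{E,p}\otimes\mc{O}_K$) structure of $\mb{D}(\mc{I}_y)$ and the Kottwitz trace conditions to see which graded pieces survive. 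That computation, which is where the content of the proposition lives, is still missing.
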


        Define $\mc{N}''(\mc{H}''_{x''}, \tau) \coloneqq \omega(\mc{H}''_{x''})_\tau \otimes \omega(\mc{H}''_{x''})_{\conj{\tau}}$. Then the previous proposition immediately gets us that
        \[\mc{N}''(\mc{H}''_{x''}, \tau) \cong \mc{N}(\mc{H}_x, \tau).\]

        Since the point $P$ and $P'$ coincide in $\mc{X}''_{U''}$, the $p$-divisible group $\mc{H}_P$ coincides with the $p$-infinity torsion of the abelian scheme $\mc{A}'_{P'}$ over $P' \in \mc{X}'_{U'}$. Thus, we can use the same norm from the Hermitian pairing
        \[\norm{\cdot} \colon W(A'_{P'}, \tau) \otimes W(A'^t_{P'}, \conj{\tau}) \to \C\]
        to give
        \[\widehat{\mc{N}(\mc{H}_P, \tau)} \coloneqq \paren*{\omega(\mc{H}_P)_\tau \otimes \omega(\mc{H}_P^t)_{\conj{\tau}}, \norm{\cdot}}\]
        into a metrized line bundle.

        \begin{thm}
            The Kodaira--Spencer isomorphism in Theorem \ref{thm:Hodge Bundle p Divisible Group} respects the norms at infinity at $P$ and hence extends to an isomorphism of metrized line bundles
            \[\widehat{\mc{L}_P}^2(\mf{d}_{\Sigma, p}\mf{d}_{B, p})^{-1} \cong \bigotimes_{\tau \in \phib} \widehat{\mc{N}(\mc{H}_P, \tau)} \otimes \widehat{\mc{N}(\mc{H}_P, \conj{\tau}}).\]
        \end{thm}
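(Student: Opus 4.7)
The plan is to reduce the verification to a local calculation at each archimedean place $v$ of $K$ (a number field over which $P$ is defined). First, I would pass through the common image $P''$ in $\mc{X}''_{U''}$: by construction, $\mc{H}_P$ and the $p$-divisible group of $A'_{P'}$ both map to $\mc{H}''_{P''}$, and Proposition \ref{prop:compare p divisible groups} identifies $\mc{N}(\mc{H}_P,\tau)$ with $\mc{N}(A'_{P'},\tau)$ canonically. Hence the Hermitian norm on the right-hand side, which is defined via the Hermitian pairing $\Omega(A'_{P'})_\tau \otimes \Omega(A'^t_{P'})_{\conj\tau} \to \C$, is well-defined and compatible with the algebraic isomorphism obtained over $\mc{X}'_{U'}$.

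The algebraic isomorphism of Theorem \ref{thm:Hodge Bundle p Divisible Group} at an archimedean place is given by the Kodaira--Spencer / Gauss--Manin map, which at the complex point $P(\C)$ admits an explicit description: on the complex uniformization, the tangent space $T_P X_U(\C) = \bigoplus_{\sigma \in \Sigma} T_{z_\sigma} \mc{H}^\pm$ maps to $\bigoplus_{\tau \in \phib} \Hom(\Omega(A'_{P'})_\tau, \Omega(A'^t_{P'})_{\conj\tau}^\vee)$ by differentiating the variation of the Hodge filtration of $H^1_{dR}(A'_{P'/\C})$ with respect to $z_\sigma$. Each coordinate direction $\partial/\partial z_\sigma$ corresponds to a unique $\tau \in \phib$ lying above $\sigma$, and the resulting pairing on periods can be written down in explicit bases coming from the $E$-action.

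Next I would compute both metrics in these explicit bases. On the left, the dual basis element $dz_\sigma$ has Petersson norm $2\,\mathrm{Im}(z_\sigma)$; on the right, the Hermitian pairing $\int_{A(\C)} \alpha \wedge \bar\beta$ evaluated on the relevant eigenspaces of the $E$-action produces (up to a universal factor depending on the normalization $(2\pi)^{-g}$ of the Hodge norm) the same quantity $2\,\mathrm{Im}(z_\sigma)$, because the CM periods factor through the same uniformizing coordinate. Taking the determinant tensor over $\tau \in \phib$ and then over $\conj\phib$ as in the proof of Theorem \ref{thm:Hodge Bundle p Divisible Group}, the factors of $2\,\mathrm{Im}(z_\sigma)$ accumulate correctly to match the norm of $\bigwedge_{\sigma \in \Sigma} dz_\sigma$ under the isomorphism, after squaring.

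The main obstacle will be the bookkeeping of normalizations: the factor of $2$ in the Petersson metric, the factor of $(2\pi)^{-g/2}$ built into the Faltings Hermitian norm, and the split between $\omega(\mc{H}_P)_\tau$ and $\omega(\mc{H}_P^t)_{\conj\tau}$ in the definition of $\mc{N}(\mc{H}_P,\tau)$. Since the Kodaira--Spencer map decomposes as a tensor product indexed by $\sigma \in \Sigma$ (each $\sigma$ contributing independently to the isomorphism and to the metric on both sides), the verification reduces to the single-place calculation carried out by \cite{Yua18} in the case $|\Sigma| = 1$; the general case then follows by taking the tensor product over $\Sigma$ and invoking the independence of the normalization constants on $\sigma$.
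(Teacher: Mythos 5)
Your proposal follows essentially the same approach as the paper. You correctly identify the key moves: pass through the common image $P''$ in $\mc{X}''_{U''}$ and use Proposition \ref{prop:compare p divisible groups} to transfer $\mc{N}(\mc{H}_P,\tau)$ to the PEL side; describe the archimedean Kodaira--Spencer map as the derivative of the Hodge filtration on $H_1^{\dR}(A'_{P'})$ with respect to the uniformizing coordinates $z_\sigma$; compute both metrics in explicit coordinates and observe each factor of $\sigma \in \Sigma$ contributes $2\,\mathrm{Im}(z_\sigma)$ independently. This is exactly what the paper does, except that you outsource the final coordinate computation to the $|\Sigma|=1$ case of \cite{Yua18} and tensor over $\Sigma$, whereas the paper carries out the explicit computation directly (writing $H_1^{\dR}(A_P)_\tau \cong \C \oplus \C$, identifying $\Omega(A^t_P)_\tau \cong \C(z,1)$ and $\Lie(A_P)_\tau \cong \C(\conj{z},1)$, and tracking $(z,1) \mapsto \tfrac{-(\conj{z},1)}{2iy}\otimes dz$ through the Gauss--Manin connection). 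The two routes are equivalent since the Kodaira--Spencer map factors as a tensor product over $\sigma \in \Sigma$. The one place your write-up remains genuinely vague is the bookkeeping of the $(2\pi)^{-g}$ in the Faltings norm versus the Petersson normalization — you flag it as the main obstacle without resolving it, though the paper's own proof is equally terse on this point and simply asserts both sides equal $\prod_{\tau \in \phib} 2y_\tau$.
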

        \begin{proof}
            At the places at infinity, the Dieudonn\'e module $\mb{D}(\mc{H}_P)$ is naturally isomorphic to the first de Rham homology of $A_P \coloneqq A'_{P'}$. Thus, the Kodaira--Spencer morphism comes from the Hodge filtration
            \[0 \to \Omega(A^t_P) \to H_1^{\dR}(A_P) \to \Omega(A_P) \to 0.\]
            For each $\tau\colon E \to \C$, we can look at the $\tau$-component of the filtration
            \[0 \to \Omega(A^t_P)_\tau \to H_1^{\dR}(A_P)_\tau \to \Omega(A_P)_\tau \to 0\]
            For $\tau\colon E \to \C$ lying above $\Sigma^c$, there is no contribution from either line bundle so we can restrict ourselves to considering the $\tau$ lying above $\Sigma$, and specifically for $\tau \in \phib$. For these $\tau$, we have 
            \[\Omega(A^t_P)_\tau \to H_1^{\dR}(A_P)_\tau \overset{\nabla}{\to} H_1^{\dR}(A_P)_\tau \otimes \Omega^1_{X_U, P} \to \Omega(A_P)^\vee \otimes \Omega^1_{X_U, P}.\]
            Explicitly, we have that $H_1^{\dR}(A_P)_\tau \cong V_\tau \cong B \otimes_{E, \tau} \C$. We can choose an isomorphism $B \otimes_{E, \tau} \C \cong \C \oplus \C$ so that $h(i)_\tau \subset M_2(\R)$ acts on $V_\tau$ via right transpose action $h(i) \cdot (z_1, z_2) = (z_1, z_2) \conj{h(i)}$. Then in terms of the complex uniformization, an element $z = x + iy \in \mb{H}_\tau^\pm$ in the complex half planes corresponds to a conjugate of $h(i)$ and $\Omega(A^t_P)_\tau \cong V^{0, -1}_\tau$ is the subset of $\C^2$ for which $h(i)$ acts as $-i$. Computation shows that $\Omega(A^t_P)_\tau \cong \C(z, 1) \subset \C^2$. Moreover, we have that $\Lie(A/P) = V^{-1, 0} \cong \C(\conj{z}, 1) \subset \C^2$. Thus, explicitly, the map above gives
            \[
            \begin{tikzcd}[row sep = small, column sep=small]
                \Omega(A^t_P)_\tau \arrow[r] & H_1^{\dR}(A_P)_\tau \arrow[r, "\nabla"] & H_1^{\dR}(A_P)_\tau \otimes \Omega^1_{X_U, P} \arrow[r] & \Omega(A_P)^\vee \otimes \Omega^1_{X_U, P} \\
                (z, 1) \arrow[r, maps to] & (z, 1) \arrow[r, maps to] & (1, 0) \otimes dz = \frac{(z, 1) - (\conj{z}, 1)}{2iy} \otimes dz \arrow[r, maps to] & \frac{-(\conj{z}, 1)}{2iy} \otimes dz.
            \end{tikzcd}
            \]

            Thus at infinity, the isomorphism $\omega_{X_U, P} \to \bigotimes_{\tau \in \phib} \omega(A^t_P)_\tau \otimes \omega(A_P)_\tau$ gives
            \[\bigwedge_{\tau \in \phib} dz \mapsto \bigotimes_{\tau \in \phib} 2iy_\tau\frac{(z_\tau, 1)}{(\conj{z_\tau}, 1)},\]
            and taking norms gives $\prod_{\tau \in \phib} 2y_\tau$ on both sides.
        \end{proof}
 
	\begin{thm}\label{thm:heightP}
	    Let $d_B$ be a positive generator of $N_{F/\Q} \mf{d}_{B}$ and let $d_\Sigma = d_{\phib \sqcup \conj{\phib}}$. We have that
	    \[h_{\widehat{L_U}}(P_U) = \sum_{\tau \in \phib} \paren*{h(\phib \sqcup \phib', \tau) + h(\conj{\phib} \sqcup \phib', \conj{\tau})} + \frac{1}{2g} \log d_Bd_{\Sigma}.\]
	\end{thm}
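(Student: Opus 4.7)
The strategy is to pull back the metrized Kodaira--Spencer isomorphism just established to the special point $P = P_U$, take Arakelov degrees, and identify each factor on the right-hand side with a component of a Faltings height via the chain $\mc{X}_U \gets \mc{X}_U \times \mc{Y}_J \to \mc{X}''_{U''} \gets \mc{X}'_{U'}$, in which $P_U$ and $P'_{U'}$ share the common image $P''_{U''}$.

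Viewing $P$ as a section $\Spec \mc{O}_K \to \mc{X}_U$ for a sufficiently large number field $K/E_X$, restricting the metric isomorphism to $P$ and rearranging gives
\[
2\,\widehat{\deg}\bigl(\widehat{\mc{L}_U}|_P\bigr) = \sum_{\tau \in \phib}\Bigl(\widehat{\deg}\,\widehat{\mc{N}(\mc{H}_P,\tau)} + \widehat{\deg}\,\widehat{\mc{N}(\mc{H}_P,\conj{\tau})}\Bigr) + \widehat{\deg}\bigl(\mf{d}_{\Sigma,p}\mf{d}_{B,p}\bigr)\big|_P.
\]
Three identifications turn the right-hand tensor factors into Faltings height components. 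First, Proposition \ref{prop:compare p divisible groups} yields $\mc{N}(\mc{H}_P,\tau) \cong \mc{N}''(\mc{H}''_{P''},\tau)$, since the $\Lie(\mc{I}^t_y)$-corrections cancel between the two tensor factors of $\mc{N}$. Second, since $\mc{H}''_{U''_p}$ is obtained from $\mc{H}'_{U'_p} = \mc{A}'_{U'}[p^\infty]$ by the same transfer that constructs $\mc{X}''_{U''}$, and $\Lie$ of a $p$-divisible group equals $\Lie$ of its ambient abelian scheme, one has $\mc{N}''(\mc{H}''_{P''},\tau) \cong \mc{N}(\mc{A}'_{P'},\tau)$. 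Third, $\mc{A}'_{P'}$ is isogenous to $A_1 \times A_2$ with $A_1$ of CM type $\phib\sqcup\phib'$ and $A_2$ of CM type $\conj{\phib}\sqcup\phib'$, and a rank comparison on the $\tau$-eigenspaces of $\Omega$ shows $\mc{N}(\mc{A}'_{P'},\tau) \cong \mc{N}(A_1,\tau)$ for $\tau \in \phib$ and $\mc{N}(\mc{A}'_{P'},\conj{\tau}) \cong \mc{N}(A_2,\conj{\tau})$. Dividing the displayed identity by $[K:\Q]$ and applying $h(\Phi,\tau) = \tfrac{1}{2[K:\Q]}\,\widehat{\deg}\,\widehat{\mc{N}(\mc{A},\tau)}$ now produces twice $\sum_{\tau \in \phib}\bigl(h(\phib\sqcup\phib',\tau)+h(\conj{\phib}\sqcup\phib',\conj{\tau})\bigr)$ on the right. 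The divisor contribution, summed over $p$ and normalized by $[K:\Q]$, is computed using $N_{E_X/\Q}(\mf{d}_\Sigma) = d_\Sigma$, $N_{F/\Q}(\mf{d}_B) = d_B$, and the multiplicativity of norms along $\Q \subset F \subset E_X \subset K$; it works out to $\tfrac{1}{g}\log d_\Sigma d_B$. Dividing the resulting identity by $2$ produces the theorem.

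The principal obstacle is that the isogeny $\mc{A}'_{P'} \sim A_1 \times A_2$ exists only on the generic fiber, so identifying the metrized line bundles $\widehat{\mc{N}(\mc{A}'_{P'},\tau)}$ with $\widehat{\mc{N}(A_i,\tau)}$ at the integral level requires enlarging $K$ so that all three abelian schemes acquire everywhere semistable reduction, and then invoking the invariance of stable Faltings heights and their $\tau$-decompositions (Theorem \ref{thm:fullvscomponent}) under isogeny. A secondary bookkeeping task is to carefully descend $\mf{d}_{\Sigma,p}$ and $\mf{d}_{B,p}$ from $\Spec \mc{O}_K \otimes \Z_p$ down to $\Spec \mc{O}_{E_X, p}$ and match the resulting Arakelov degree at $P$ with the explicit combination $\tfrac{1}{g}\log d_\Sigma d_B$. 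The archimedean part of the identification is by design an isometry: the Hermitian metric on $\widehat{\mc{N}(\mc{H}_P,\tau)}$ was defined using precisely the pairing on $\mc{A}'_{P'}$ that defines $h(\Phi,\tau)$, and the preceding theorem already verified that Kodaira--Spencer itself preserves norms at infinity at $P$.
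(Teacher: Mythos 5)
Your proposal follows essentially the same route as the paper's proof: restrict the metrized Kodaira--Spencer isomorphism of the preceding theorem to $P$, take Arakelov degrees, transport the factors $\mc{N}(\mc{H}_P,\tau)$ along $\mc{X}_U \gets \mc{X}_U\times\mc{Y}_J \to \mc{X}''_{U''} \gets \mc{X}'_{U'}$ via Proposition \ref{prop:compare p divisible groups} and the coincidence $P''_{U''}$ of $P_U$ and $P'_{U'}$, identify each factor with a component of the Faltings height of $A_1$, $A_2$, and compute the discriminant divisor's degree. You are, if anything, more explicit than the paper at two points: the rank count giving $\mc{N}(\mc{H}_P,\tau)\cong \mc{N}(A_1,\tau)$ for $\tau\in\phib$ and $\mc{N}(\mc{H}_P,\conj{\tau})\cong\mc{N}(A_2,\conj{\tau})$, and the careful tracking of the factor-of-two mismatch between $\widehat{\deg}\,\widehat{\mc{N}}$ and the normalization in the definition $h(\Phi,\tau)=\tfrac{1}{2[K:\Q]}\widehat{\deg}\,\widehat{\mc{N}(\mc{A},\tau)}$, ending with a division by $2$. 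The paper compresses both of these into a single displayed equality, so your unpacking is a genuine clarification rather than a departure. Your observation that the isogeny $\mc{A}'_{P'}\sim A_1\times A_2$ only exists generically, and that one appeals to the independence of $h(\Phi,\tau)$ from the representing abelian variety (Theorem \ref{thm:fullvscomponent}), fills in a step the paper leaves implicit.

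One slip worth fixing: you justify the discriminant contribution by ``multiplicativity of norms along $\Q\subset F\subset E_X\subset K$,'' but $F$ is not in general a subfield of the reflex field $E_X$ (for instance, if $\Sigma=\Hom(F,\R)$ then $E_X=\Q$). The computation should instead run the two pieces through separate towers: $\mf{d}_B$ lives over $\mc{O}_F$, so pulling back along $F\subset K$ and normalizing by $[K:\Q]$ gives $\tfrac1g\log d_B$; $\mf{d}_\Sigma$ lives over $\mc{O}_{E_X}$, so the same reasoning a priori gives $\tfrac{1}{[E_X:\Q]}\log d_\Sigma$. That these combine into $\tfrac1g\log(d_B d_\Sigma)$ is therefore not an immediate consequence of ``norm multiplicativity'' along a single tower; it needs to be reconciled with how $\mf{d}_{\Sigma,p}$ is actually realized as a divisor on the integral model in Theorem \ref{thm:Hodge Bundle p Divisible Group}. (The paper is also terse here, attributing the $g$ entirely to the $\Q$-versus-$F$ normalization; you should at least not cite a containment $F\subset E_X$ that fails.) Separately, the remark that ``the $\Lie(\mc{I}^t_y)$-corrections cancel between the two tensor factors of $\mc{N}$'' glosses over the fact that Proposition \ref{prop:compare p divisible groups} twists $\Lie(\mc{H})$ and $\Lie(\mc{H}^t)$ by $\Lie(\mc{I}^t_y)^\vee$ and $\Lie(\mc{I}^t_y)$ respectively (not by inverse twists on $\omega(\mc{H})_\tau$ and $\omega(\mc{H}^t)_{\conj{\tau}}$), so the cancellation is not by inspection; this matches the paper's own level of detail, but it merits a sentence.
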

	\begin{proof}
        By Theorem \ref{thm:Hodge Bundle p Divisible Group}, we get that
	    \[2h_{\widehat{\mc{L}_U}}(P_U) = \sum_{\tau \in \phib} h_{\widehat{\mc{N}(\tau)}}(P_U) + h_{\widehat{\mc{N}(\conj{\tau})}}(P_U) + \frac{1}{2g}\log d_Bd_\Sigma,\]
        with the extra factor of $g$ coming from the fact that we defined the height over $\Q$, which is $[F:\Q]$ times larger than the usual height defined over $F$. By the previous proposition, we have that
        \[h_{\widehat{\mc{N}(\tau)}}(P_U) = h_{\widehat{\mc{N}''(\tau)}}(P''_{U''}).\]
        Then by our choice of $y \in Y_J$ and $P' \in X'_{U'}$, the point $P''_{U''}$ is the image of $P' \in X'_{U'}$ which represents an abelian variety that is isogenous to a product of CM abelian varieties, one of CM-type $\phib \sqcup \phib'$ and the other of CM-type $\conj{\phib} \sqcup \phib'$. Thus, we get that
        \[h_{\widehat{\mc{N}''(\tau)}}(P''_{U''}) = h_{\widehat{\mc{N}'(\tau)}}(P'_{U'}) = h(\phib \sqcup \phib', \tau) + h(\conj{\phib}\sqcup \phib', \conj{\tau}).\]
	\end{proof}
	
	This result does not depend on the choice of complementary CM-type $\phib'$ and so summing over all such complementary CM-types nets us the following.
	
	\begin{thm}
	    Suppose that $U = \prod_v U_v$ is a maximal compact subgroup of $G(\mb{A}_f)$. Then
		\begin{align*}
		    \frac{1}{2}h_{\widehat{\mc{L}_U}}(P_U) =& \frac{1}{2^{\abs{\comp{\Sigma}}}} \sum_{\Phi \supset \phib} h(\Phi) - \frac{\abs{\comp{\Sigma}}}{g2^g} \sum_\Phi h(\Phi)\\
      &+ \frac{1}{8} \log d_{E/F, \Sigma}d_\Sigma^{-1} + \frac{1}{4}\log d_\phib d_{\conj{\phib}} + \frac{1}{4g} \log d_Bd_\Sigma + \frac{\abs{\Sigma}}{4g} \log d_F,
            \end{align*}
		where the first summation is over all full CM-types which contain $\phib$, and the second summation over all full CM-types of $E$.
	\end{thm}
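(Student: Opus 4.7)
The plan is to start from Theorem \ref{thm:heightP}, which is valid for every choice of complementary partial CM-type $\phib'$, and average the identity over the $2^{\abs{\comp{\Sigma}}}$ possible choices. Since the left-hand side $h_{\widehat{\mc{L}_U}}(P_U)$ and the $\tfrac{1}{2g}\log d_B d_\Sigma$ term are independent of $\phib'$, averaging yields
\[h_{\widehat{\mc{L}_U}}(P_U) = \frac{1}{2^{\abs{\comp{\Sigma}}}} \sum_{\phib'} \sum_{\tau \in \phib} \paren*{h(\phib \sqcup \phib', \tau) + h(\conj{\phib}\sqcup \phib', \conj{\tau})} + \frac{1}{2g}\log d_B d_\Sigma.\]

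Next I would convert the partial $\tau$-sums into full Faltings heights using the decomposition formula recalled just after Definition \ref{def:CM Type Determinant}. Writing $\Phi = \phib \sqcup \phib'$ and $\Phi^* = \conj{\phib}\sqcup \phib'$, the formula gives
\[\sum_{\tau \in \phib} h(\Phi, \tau) = h(\Phi) + \frac{1}{4[E_\Phi:\Q]}\log d_\Phi d_{\conj{\Phi}} - \sum_{\sigma \in \phib'} h(\Phi, \sigma),\]
and similarly for $\sum_{\tau \in \phib} h(\Phi^*, \conj{\tau})$. As $\phib'$ runs through complementary CM-types, $\Phi$ ranges over $\{\Phi \supset \phib\}$ and $\Phi^*$ over $\{\Psi \supset \conj{\phib}\}$; the involution $\Phi \mapsto \conj{\Phi}$ together with $h(\conj{\Phi}) = h(\Phi)$ (dual abelian varieties share their Faltings height) collapses the combined $h(\Phi) + h(\Phi^*)$ contributions to $2\sum_{\Phi \supset \phib} h(\Phi)$, producing after rescaling the first main term of the target formula.

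The residual sums $\sum_{\sigma \in \phib'} h(\Phi,\sigma)$ and $\sum_{\sigma \in \phib'} h(\Phi^*,\sigma)$ are then treated using Theorem \ref{thm:nearby CM-type}. For each place $\sigma$ of $E$ lying above $\comp{\Sigma}$, the flip $\Phi_\sigma \coloneqq (\Phi \setminus \{\sigma\})\cup \{\conj{\sigma}\}$ produces a nearby CM-type, so $h(\Phi,\sigma) + h(\Phi_\sigma, \conj{\sigma}) = c$ for a single universal constant $c$. Pairing over all nearby flips, and applying the decomposition formula to $\sum_\Phi \sum_{\tau \in \Phi} h(\Phi,\tau) = g \cdot 2^{g-1}\, c$, I can solve
\[c = \frac{1}{g \cdot 2^{g-1}}\paren*{\sum_\Phi h(\Phi) + \sum_\Phi \frac{1}{4[E_\Phi:\Q]}\log d_\Phi d_{\conj{\Phi}}}.\]
Pairing inside the double sum yields $\sum_{\Phi \supset \phib} \sum_{\sigma \in \phib'} h(\Phi,\sigma) = \abs{\comp{\Sigma}}\cdot 2^{\abs{\comp{\Sigma}}-1}c$, with the analogous identity for the $\Phi^*$-sum, and substituting produces precisely the coefficient $-\tfrac{\abs{\comp{\Sigma}}}{g\cdot 2^g}$ of $\sum_\Phi h(\Phi)$ in the target.

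What remains, and what I expect to be the main obstacle, is to verify that the accumulated discriminant contributions
\[\frac{1}{2^{\abs{\comp{\Sigma}}}}\sum_{\Phi \supset \phib}\frac{1}{4[E_\Phi:\Q]}\log d_\Phi d_{\conj{\Phi}} \;-\; \frac{\abs{\comp{\Sigma}}}{g\cdot 2^g}\sum_\Phi \frac{1}{4[E_\Phi:\Q]}\log d_\Phi d_{\conj{\Phi}}\]
simplify to $\tfrac{1}{8}\log d_{E/F,\Sigma}d_\Sigma^{-1} + \tfrac{1}{4}\log d_\phib d_{\conj{\phib}} + \tfrac{\abs{\Sigma}}{4g}\log d_F$. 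I would approach this by expressing each $d_\Phi$ combinatorially via Definition \ref{def:CM Type Determinant}: write the image of $\mc{O}_{E_\Phi}\otimes_\Z \mc{O}_E$ inside $\widetilde{E_\Phi}$ as a quotient by the factored polynomial $f_\Phi(t) = \prod_{\tau \in \Phi}(t-\tau(\pi))$ for a suitable generator $\pi$, then compute the relative discriminant as a Vandermonde-type determinant exactly as in the proof of Theorem \ref{thm:Hodge Bundle p Divisible Group}. Averaging the resulting products of factors $(\tau(\pi)-\tau'(\pi))$ over $\Phi$ — and separating the contributions from pairs with both indices in $\phib$, both in $\phib'$, or one in each — should recover the target decomposition in terms of $d_\phib$, $d_{\conj{\phib}}$, $d_\Sigma$, $d_{E/F,\Sigma}$, and $d_F$.
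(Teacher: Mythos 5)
Your proposal is correct and follows essentially the same route as the paper: start from Theorem \ref{thm:heightP}, average over complementary partial CM-types $\phib'$, convert $\tau$-sums to full Faltings heights via the decomposition identity, and dispatch the residual $\phib'$-sums using Theorem \ref{thm:nearby CM-type}. The only gap you flag — the bookkeeping of the accumulated discriminant terms — is handled in the paper exactly as you sketch (Vandermonde determinants of the factored polynomials $f_\Phi$, split according to which places the pair $(\tau,\tau')$ lies over), with the small bonus that the paper folds part of this into citing \cite[Cor.~2.6]{Yua18} for the constant $h_{\mathrm{nb}}$, which is equivalent to your use of $\sum_\Phi \tfrac{1}{4[E_\Phi:\Q]}\log d_\Phi d_{\conj\Phi} = 2^{g-2}\log d_F$.
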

    \begin{proof}
        We note $h(\Phi) = h(\conj{\Phi})$ and $h(\Phi, \tau) = h(\conj{\Phi}, \conj{\tau})$. So we can write
        \begin{equation}\tag{*}\label{eq:*}
            \begin{split}
            h_{\widehat{\mc{L}_U}}(P_U) - \frac{1}{2g}\log d_Bd_\Sigma= & \sum_{\tau \in \phib \sqcup \phib'} h(\phib \sqcup \phib', \tau) + \sum_{\tau \in \phib \sqcup \conj{\phib'}} h(\phib \sqcup \conj{\phib'}, \tau) \\
            &- \sum_{\tau \in \phib'} \paren*{h(\phib \sqcup \phib', \tau) + h(\phib \sqcup \conj{\phib'}, \conj{\tau})}. 
            \end{split}
        \end{equation}
        
        Let $(\Phi_1, \Phi_2)$ be a nearby pair of full CM-types meaning that $\abs{\Phi_1 \cap \Phi_2} = g-1$ and let $\tau_i = \Phi_i \backslash (\Phi_1 \cap \Phi_2)$. Then Theorem \ref{thm:nearby CM-type} tells us the quantity $h(\Phi_1, \tau_1) + h(\Phi_2, \tau_2)$ is independent of the choice of nearby pair, so we will denote it by $h_{\mathrm{nb}}$. By \cite[Cor. 2.6]{Yua18}, we have that
        \[\sum_{\Phi} h(\Phi) = g2^{g - 1}h_{\mathrm{nb}} - 2^{g - 2}\log d_F.\]
        
        Now we sum equation (\ref{eq:*}) over all complementary types $\phib'$ to get
        \[2^{\abs{\comp{\Sigma}}} h_{\widehat{\mc{L}_U}}(P_U) - \frac{2^{\abs{\comp{\Sigma}}}}{2g}\log d_Bd_\Sigma = 2\sum_{\Phi \supset \phib} \sum_{\tau \in \Phi} h(\Phi, \tau) - \abs{\comp{\Sigma}}2^{\abs{\comp{\Sigma}}} h_{\mathrm{nb}}.\]
        We now use Theorem \ref{thm:fullvscomponent} to represent the inner summation as $\sum_{\tau \in \Phi} h(\Phi, \tau)$. Doing so gives
        \begin{align*}
            \frac{1}{2}h_{\widehat{\mc{L}_U}}(P_U) =& \frac{1}{2^{\abs{\comp{\Sigma}}}} \sum_{\Phi \supset \phib} h(\Phi) - \frac{\abs{\comp{\Sigma}}}{g2^g} \sum_\Phi h(\Phi)\\
            &+ \frac{1}{2^{\abs{\comp{\Sigma}}}} \sum_{\Phi \supset \phib} \frac{1}{4[E_\Phi:\Q]} \log (d_\Phi d_{\conj{\Phi}}) + \frac{1}{4g} \log d_Bd_\Sigma - \frac{\abs{\comp{\Sigma}}}{4g} \log d_F.
        \end{align*}

        Base changing up to $E' = E^{\Gal}$, we can simplify the first sum of logarithms as
        \[\frac{1}{2^{\abs{\comp{\Sigma}}} \cdot 4[E':\Q]} \sum_{\Phi \supset \phib} \log(d_\Phi d_{\conj{\Phi}}) = \sum_{p < \infty} \sum_{\sigma\colon E' \to \clos{\Q_p}}\frac{1}{2^{\abs{\comp{\Sigma}}} \cdot 4[E':\Q]} \sum_{\Phi \supset \phib} \log \abs{d_{\Phi, p}d_{\conj{\Phi}, p}}_\sigma.\]
        For each $\tau \colon E' \to \clos{\Q_p}$, let $\pi$ be a generator of $\mc{O}_{E, p}$ over $\Z_p$. Let
        \[f_\Phi(t) = \prod_{\tau\in \Phi} (t - \tau(\pi)) \in \mc{O}_{E_\Phi, p}[t].\]
        The image of $\mc{O}_{E_\Phi} \times_\Z \mc{O}_{E, p}$ in $\widetilde{E_{\Phi, p}}$ is $\mc{O}_{E_\Phi, p}[t]/f_\Phi(t)$. This means $d_{\Phi, p}$ is the discriminant $f_\Phi(t)$, or
        \[d_{\Phi, p} = \prod_{(\tau, \tau')} (\tau(\pi) - \tau'(\pi))^2,\]
        where the product is taken over all unordered pairs of distinct $\tau \neq \tau' \in \Phi$. We can write the summation over all $\Phi \supset \phib$ as the sum of $\log \abs{\tau(\pi) - \tau'(\pi)}_\sigma$ over all pairs $(\tau, \tau')$, and then subtract the pairs when $\tau = \conj{\tau'}$ and when $\tau \in \phib$ and $\tau' \in \conj{\phib}$, or vice versa. Thus, we can simplify the sum as
        \begin{align*}
        \sum_{\Phi \supset \phib} \log \abs{d_{\Phi, p}d_{\conj{\Phi}, p}}_\sigma =&\log \abs*{\frac{\dprod_{(\tau, \tau') \in \Hom(E, \clos{\Q_p})} (\tau(\pi) - \tau'(\pi))^2}{\dprod_{\tau \in \Phi} (\tau(\pi) - \conj{\tau}(\pi))^2}}_\sigma^{2^{\abs{\comp{\Sigma}} - 1}}\\
        &+ \log \abs*{\frac{\dprod_{\tau \in \phib} (\tau(\pi) - \conj{\tau}(\pi))^2}{\dprod_{(\tau, \tau') \in \phib \sqcup \conj{\phib}} (\tau(\pi) - \tau'(\pi))^2}}_\sigma^{2^{\abs{\comp{\Sigma}} - 1}}\\
        &+ \log\abs*{\dprod_{(\tau, \tau') \in \phib} (\tau(\pi) - \tau'(\pi))^2(\conj{\tau}(\pi) - \conj{\tau'}(\pi))^2}_\sigma^{2^{\abs{\comp{\Sigma}}}}.
        \end{align*}
        We can simplify the first term as $\log \abs*{\frac{d_E, p}{d_{E/F, p}}}_\sigma = \log \abs{d_{F, p}}_\sigma^2$. The second term can be written as $\log \abs*{\frac{N_{F/E_X} d_{E/F}}{d_\Sigma}}_\sigma = \log \abs*{\frac{d_{E/F, \Sigma}}{\Sigma}}_\sigma$. Finally, the last term is $\log |d_{\phib, p}d_{\conj{\phib}, p}|_\sigma$.

        Plugging this back in gives
        \begin{align*}
            \frac{1}{2}h_{\widehat{\mc{L}_U}}(P_U) =& \frac{1}{2^{\abs{\comp{\Sigma}}}} \sum_{\Phi \supset \phib} h(\Phi) - \frac{\abs{\comp{\Sigma}}}{g2^g} \sum_\Phi h(\Phi)\\
            &+ \frac{1}{8} \log d_{E/F, \Sigma}d_\Sigma^{-1} + \frac{1}{4}\log d_\phib d_{\conj{\phib}} + \frac{1}{4g} \log d_B + \frac{\abs{\Sigma}}{4g} \log d_F.
        \end{align*}
    \end{proof}
	
	\section{Andr\'e--Oort for Shimura Varieties}\label{sec:Andre Oort}
	
	A definition of the height of a partial CM-type was given by \cite{Pil21}. We show that their definition of the partial CM-type is compatible with our quaternionic height. We first recall their definition of the modified height on special points, specialized to the case of a partial CM-type. Let $E/F$ be a CM extension, with $[F:\Q] = g$, and set $R_E \coloneqq \Res_{F/\Q} E^\times/F^\times$. Let $\phib \subset \Hom(E, \C)$ be a partial CM-type and $\phib'$ be a complementary partial CM-type. Use them to identify $E\otimes_\Q \R \overset{\phib \sqcup \phib'}{\cong} \C^g$. Then we have that
	\[R_{E, \R} \cong \prod_{\sigma \in \phib \sqcup \phib'} \C_\sigma/\R,\]
	and we take our homomorphism $h_\phib \colon \C^\times \to R_{E, \R}$ as
	\[h_\phib(z) \coloneqq \paren*{\prod_{\sigma \in \phib} z_\sigma, \prod_{\sigma \in \phib'} 1_\sigma}.\]
	The Shimura datum $(R_E, h_\phib)$ and compact open subgroup $K \subset R_E(\mb{A}_f)$ give rise to a $0$-dimensional Shimura variety $T_K$ whose complex points are
	\[T_K(\C) \cong R_E(\Q) \backslash R_E(\mb{A}_f)/K.\]
	It has a canonical model over a number field $E_T$. We identify
	\[R_E(\C) \cong \prod_{\sigma \in \phib \sqcup \phib'} \C_\sigma.\]
	Let $\chi\colon R_E(\C) \to \C$ be the character given by
	\[\chi\paren*{\prod_{\sigma \in \Phi}z_\sigma} \coloneqq \prod_{\sigma \in \phib} \frac{z}{\conj{z}}.\]
	Let $V$ be the smallest $\Q$-representation of $R_E$ whose complexification contains $\chi$. Let $\Fil^aV$ be the smallest piece of the Hodge filtration and assume that it is one-dimensional and $R_E$ acts on it via $\chi$. Let $\Lambda \subset V$ be a maximal lattice and now take $K = \prod_p K_p \subset R_E(\mb{A}_f)$ to be the stabilizer of $\Lambda$. Let $\psi$ be a polarization on $V$ that takes integral values on $\Lambda$.
	
	The representation $V$ of $R_E$ gives rise to a vector bundle $\mc{V}_K$ over $T_K$ and filtration on it. By \cite{Dia22}, over every non-archimedean place $v$ of $\mc{O}_{E_T}$ lying over a prime $p \in \Z$, our data $(\Lambda, V, \Fil)$ can be functorially identified with data $(_p\Lambda, _pV, _p\Fil)$ of a filtered vector bundle over $T_{E_{T, v}, K}$, the Shimura variety extended over local fields.
	
	To each place $v$ of $\mc{O}_{E_T}$, we define a norm on $\Fil^a \Lambda$ as:
	\begin{itemize}
	    \item If $v \mid \infty$, then the norm is the Hodge norm given by the polarization $q$;
	    
	    \item If $v \mid p$ is a non-archimedean place such that
	    \begin{itemize}
	        \item $T$ is unramified at p,
	        \item $K_v$ is maximal, and
	        \item $p \ge a \dim V + 2$,
	    \end{itemize}
	    then use the crystalline norm on ${}_pV$;
	    
	    \item For all other places $v$, use the intrinsic norm on ${}_p\Lambda$.
	\end{itemize}
	Now the height of $\phib$ can be defined as
	\[h(\phib) \coloneqq \sum_{v} -log \|s\|_v,\]
	where $s$ is any element of $\mc{V}_K$ and the sum is over all places of $\mc{O}_{E_T}$.
	
	The height depends on the choice of lattice $\Lambda$ and polarization $q$, but only up to $d_E$, the discriminant of $E$.
	
	\begin{thm}[{{\cite[Lem. 9.4, Thm. 9.5, 9.6]{Pil21}}}]
	    The height $h(\phib)$ is defined up to $O(\log d_E)$.
	\end{thm}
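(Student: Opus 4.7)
The plan is to show that any two valid choices of auxiliary data $(\Lambda, q)$ and $(\Lambda', q')$ yield heights differing by $O(\log d_E)$. Fixing a nonzero section $s \in \mc{V}_K$, the product formula ensures that once the adelic norms are specified, the sum $h(\phib) = \sum_v -\log \|s\|_v$ is independent of the particular $s$ chosen; the rational scalars cancel when summed over all places. It then suffices to bound the total change in the local norms at each of $\infty$, the good non-archimedean places, and the bad non-archimedean places, across the two alterations.

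First I would handle the lattice dependence. Two maximal $R_E$-stable lattices in $V$ are commensurable and coincide at all but finitely many primes. At each place $v$ where they differ, the intrinsic or crystalline norm built from $\Lambda_v$ rescales by a factor controlled by the local index $[\Lambda_v : \Lambda_v \cap \Lambda'_v]$ together with the local discriminant of the acting order. Summing over all $v$, the contributions telescope into $\log [\Lambda : \Lambda \cap \Lambda']$, which is $O(\log d_E)$ because this index divides a power of the discriminant of the $\mc{O}_F$-order in $V$ through which $R_E$ acts, and the latter is controlled by $d_E$.

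Next I would handle the polarization dependence. Two integral polarizations $q, q'$ on $(V, \Lambda)$ differ by an automorphism $\mu \in \Aut_{R_E}(V)_\Q$ preserving the Hodge structure. At each archimedean place this rescales the Hodge norm by a factor depending on $\mu$, and at the non-archimedean places where the polarization enters the definition of the norm, the change is controlled by the local norm of $\mu$. The product formula cancels the principal part of this contribution, so only the unit ambiguity of $\mu$ remains, and by Dirichlet's unit theorem this residual is bounded by the regulator of the subfield through which $\mu$ is constrained, which is itself $O(\log d_E)$.

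The hard part will be the finitely many non-archimedean places where the intrinsic norm on ${}_p\Lambda$ replaces the crystalline norm, since the intrinsic norm is not defined directly by a lattice inclusion and is instead constructed from the filtered module $({}_p\Lambda, {}_pV, {}_p\Fil)$. Here I would argue, mirroring the construction in \cite{Pil21}, that the intrinsic norm varies under lattice commensurability by a factor bounded by the local discriminant of the relevant order of $R_E$, so that the sum of discrepancies at such places is again absorbed into $O(\log d_E)$. Combining this with the bounds from the lattice step and the polarization step yields the desired conclusion.
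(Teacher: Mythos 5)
This statement is imported verbatim from \cite{Pil21} (their Lemma 9.4 and Theorems 9.5, 9.6) and the paper gives no proof of it: it is a black box used later to justify the $O(\log d_E)$ slack in Theorem \ref{thm:main theorem 2}. There is therefore no in-paper argument to compare yours against, and what you have written should be judged against \cite{Pil21} itself rather than against this article.

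On its own terms, the outline of your proposal (isolate the lattice ambiguity, the polarization ambiguity, and the bad-prime intrinsic-norm ambiguity, and bound each) is the right skeleton, but one step is wrong as stated. You claim that the residual unit ambiguity in the polarization contributes at most the regulator of the relevant subfield, ``which is itself $O(\log d_E)$.'' Regulators of number fields are \emph{not} $O(\log d_E)$; by Brauer--Siegel $h_E R_E$ grows like $d_E^{1/2+o(1)}$, so when the class number is small the regulator is polynomial in $d_E$, far larger than $\log d_E$. The reason this does not destroy well-definedness in \cite{Pil21} is not a regulator bound but the product formula: the polarization on $(V,\Lambda)$ is unique up to a totally positive scalar $c\in F^\times$, and for such a scalar $\sum_v \log|c|_v = 0$, so the archimedean and non-archimedean rescalings cancel exactly, with only bounded local denominators left over. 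Your lattice step is morally right but needs the observation that any two maximal $\mc{O}_E$-stable lattices differ only at primes dividing the discriminant, with index bounded by a fixed power of the local discriminant, which is where the $\log d_E$ actually comes from. Your treatment of the finitely many bad places where the intrinsic norm is used is sketched at the right level of detail, but it is precisely the content of \cite[Lem.\ 9.4]{Pil21} and cannot be waved through with ``mirroring the construction''; you would need to reproduce their comparison of the intrinsic norm with a fixed integral structure.
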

	
	\begin{thm}
	    \[2h(\phib) = h_{\widehat{\mc{L}_U}}(P_U) + O(\log d_E).\]
	\end{thm}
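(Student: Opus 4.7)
The plan is to invoke the functoriality of the canonical height of \cite{Pil21} under morphisms of Shimura data. The embedding $E \hookrightarrow B$ defines a morphism of Shimura data from $(T_E, h_E)$ into $(G, h)$, which after passing to the adjoint datum factors through $(R_E, h_\phib)$. The $0$-dimensional Shimura variety $T_K$ attached to $(R_E, h_\phib)$ maps onto the special point $P_U \in X_U$, so by the PST compatibility the height $h_{\widehat{\mc{L}_U}}(P_U)$ should equal the PST height on $T_K$ computed with whichever representation of $R_E$ carries the pullback of $\omega_{\mc{X}_U}|_{P_U}$, up to an ambiguity of $O(\log d_E)$ coming from the choice of lattice $\Lambda$ and polarization $\psi$.

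First I would verify the representation-theoretic match. The cotangent fiber $\Omega^1_{X_U, P_U}$ decomposes into one-dimensional pieces indexed by $\sigma \in \Sigma$, each acted on by $h_\phib$ with weight $(1,-1)$, so the line $\omega_{\mc{X}_U}|_{P_U}$ is acted on by $R_E$ through the character $\chi$. The factor $2$ in the theorem statement then comes from Theorem~\ref{thm:Hodge Bundle p Divisible Group}: up to the divisors $\mf{d}_{\Sigma, p}\mf{d}_{B, p}$, the square $\omega_{\mc{X}_U}^{\otimes 2}$ is a tensor product of Hodge bundles $\mc{N}(\mc{H}_P, \tau)$ ranging over $\tau \in \phib \sqcup \conj{\phib}$, which naturally realizes the representation $V^{\otimes 2}$ in the PST framework.

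It remains to compare the metrized structures at each place. At each archimedean place, the Petersson norm defining $\widehat{\mc{L}_U}$ and the Hodge-polarization norm on $\Fil^a V^{\otimes 2}$ are both dictated by the Shimura datum and agree up to a bounded constant independent of $\phib$. At a non-archimedean place of good reduction (where $p$ is unramified in $E$ and in $B$), both integral structures descend from the same $p$-divisible group $\mc{H}_U$ via Dieudonn\'e theory, and hence agree with the crystalline norm used in \cite{Pil21}. The main obstacle is at the finitely many ramified primes, where the local model $\mb{M}^{\mathrm{loc}}$ is not smooth and the PST recipe switches to the intrinsic norm: there the discrepancy between the canonical integral structure and the PST norm is governed by the discriminant factors $\mf{d}_{\Sigma, p}$ and $\mf{d}_{B, p}$ appearing in Theorem~\ref{thm:Hodge Bundle p Divisible Group}, together with contributions from the failure of $\Lambda$ to be self-dual at those primes. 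Each such factor is bounded by a power of $d_E$, exactly as in the proof of Corollary~\ref{cor:bound partial CM height}, and so all such contributions are absorbed into the final $O(\log d_E)$ error term.
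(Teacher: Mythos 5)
Your overall strategy — invoking the PST compatibility of heights under morphisms of Shimura data, matching representations, comparing the crystalline and de Rham integral structures at unramified primes, and absorbing the ramified primes into $O(\log d_E)$ — is the same strategy the paper follows. However, the bookkeeping for the factor of $2$ is not coherent as stated, and this is where the gap lies.

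You assert that $\omega_{\mc{X}_U}|_{P_U}$ is acted on by $R_E$ through the character $\chi$. Taken together with the PST definition that $\Fil^a V$ is one-dimensional with $R_E$ acting via $\chi$, functoriality would give $h_{\widehat{\mc{L}_U}}(P_U) = h(\phib) + O(\log d_E)$, \emph{without} the factor of $2$. You then try to supply the factor of $2$ by citing Theorem~\ref{thm:Hodge Bundle p Divisible Group}: the square $\omega^{\otimes 2}$ agrees (up to $\mf{d}_{\Sigma,p}\mf{d}_{B,p}$) with $\bigotimes_{\tau} \mc{N}(\mc{H}_P, \tau)$, which you say realizes $V^{\otimes 2}$. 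But if $\omega$ realizes $V$ and $\omega^{\otimes 2}$ realizes $V^{\otimes 2}$, then $2h_\omega = 2h(\phib)$ and you are back to $h_\omega = h(\phib)$ — the Kodaira--Spencer square is a \emph{consistency check}, not a source of a new factor of $2$. To actually get the factor of $2$, one must notice that the PST crystalline lattice at a CM point is compared with the integral de Rham structure $\Omega(\mc{A})$ of a single abelian variety $\mc{A}$, and that the Hermitian duality pairing gives the \emph{same} integral structure on $\Omega(\mc{A}^t)$; the canonical line $\mc{L}_U$ at $P_U$ is then identified via Theorem~\ref{thm:Hodge Bundle p Divisible Group} with (the relevant determinant of) $\Omega(\mc{A}) \otimes \Omega(\mc{A}^t)$, whose height is twice $h(\phib)$, not $h(\phib)$.

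This argument in turn requires passing through the chain $X_U \gets X_U \times Y_J \to X''_{U''} \gets X'_{U'}$ and Proposition~\ref{prop:compare p divisible groups} in order to produce the abelian variety $\mc{A}$ at $P''$ and to identify the $p$-divisible groups $\mc{H}_{P_U}$, $\mc{H}''_{P''}$, $\mc{H}'_{P'}$ with one another. Your sketch mentions that "both integral structures descend from the same $p$-divisible group $\mc{H}_U$ via Dieudonn\'e theory," which is morally what happens, but the paper's argument needs the explicit identification through $X''$ — this is exactly where the PEL moduli interpretation, and therefore the crystalline/de Rham comparison of \cite{Pil21}, becomes available. Without this step there is no abelian variety in sight, and hence no direct access to the comparison isomorphism that makes the lattice comparison precise. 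The rest of your place-by-place analysis (archimedean norms agree up to bounded constants, ramified primes contribute $O(\log d_E)$ through the discriminant divisors of Theorem~\ref{thm:Hodge Bundle p Divisible Group}) is in line with the paper's argument.
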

	\begin{proof}
	    Consider the representation of $E$ on $V = B$ through left multiplication. Our point $P_U$ corresponds to an action whose trace is $\Tr_{\phib \sqcup \phib'} + \Tr_{\conj{\phib} \sqcup \phib'}$. When we take the Shimura variety associated with the adjoint group $G^{\ad}$, then this representation gives a representation of $R_E$ on $V/F$ whose trace is given by the trace of $\frac{z_\tau}{z_{\conj{\tau}}}$, meaning that we get $2\chi$. Thus, we get a representation of $R_E$ whose complexification contains $2\chi$. Thus, we are reduced to showing that the choice of lattices at each finite place are the same. However, since our equality is only up to $O(d_E)$, it suffices to consider primes where $B, E$ are unramified and the local norm used in the definition of $h(\phib)$ is given by the crystalline norm.

        Let $S = \mc{O}_{E_X'^p, p}$ again be the maximal unramified extension of $\mc{O}_{E_X', p}$. To show that the lattices coincide, it suffices to check the two lattices at each $S$ point of $X_U$. Under the mapping $X_U \to X_U \times Y_J \to X''_{U''}$, the point $P_U$ corresponds to an abelian variety $\mc{A}$ with complex multiplication of type $\phib \sqcup \phib' + \conj{\phib} \sqcup \phib'$. By \cite[Sec. 9.3]{Pil21}, the lattice given by the crystalline norm is the same as the lattice from integral de Rham cohomology. So the lattice at that point is
        \[\Omega(\mc{A}) \otimes S \cong \Omega(\mc{A}[p^\infty]) \otimes S \cong \Omega(\mc{H}''_S).\]
        However, by Proposition \ref{prop:compare p divisible groups}, this is the same as $\Omega(\mc{H}_S)$. Moreover, the pairing is perfect here meaning that we get the same lattice on $\Omega(\mc{A}^t) \otimes S \cong \Omega(\mc{H}''^t_S)$. Thus, twice $h(\phib)$ corresponds to taking the height relative to the lattice $\Omega(\mc{A}) \otimes \Omega(\mc{A}^t)$ which is $\Omega(\mc{H}_S) \otimes \Omega(\mc{H}_S^t)$ which by Theorem \ref{thm:Hodge Bundle p Divisible Group} is just $\mc{L}_U$, as required.
	\end{proof}
	
    \bibliographystyle{alpha}
    \bibliography{references}{}
\end{document}